\documentclass[12pt]{amsart}

\usepackage{amsmath,amsfonts,amsthm,amssymb}
\usepackage{hyperref}
\usepackage{tikz}
\usetikzlibrary{arrows.meta, positioning, calc}
\usepackage{pgfplots}
\usepackage{pgfplotstable} 
\usepgfplotslibrary{fillbetween}
\pgfplotsset{compat=1.18}
\usepackage{comment}
\usepackage{xcolor}
\usepackage{dirtytalk}
\usepackage{xfrac}

\usepackage{a4wide}
\usepackage{enumerate}
\usepackage{multicol, multirow, longtable}
\newtheorem{theorem}{Theorem}[section]
\newtheorem{lemma}[theorem]{Lemma}
\newtheorem{proposition}[theorem]{Proposition}
\newtheorem{corollary}[theorem]{Corollary}

\newtheorem{theo}{Theorem}

\theoremstyle{remark}

\newtheorem{remark}[theorem]{Remark}

\newtheorem*{claim*}{Claim}

\theoremstyle{definition}

\numberwithin{equation}{section}

\newcommand{\C}{\ensuremath{\mathbb{C}}}
\newcommand{\R}{\ensuremath{\mathbb{R}}}

\DeclareMathOperator{\Ad}{Ad}

\DeclareMathOperator{\Ric}{Ric}
\DeclareMathOperator{\scal}{scal}

\renewcommand{\H}{\ensuremath{\mathbb{H}}}

\newcommand{\spin}[1]{\ensuremath{\mathsf{Spin}_{#1}}}
\newcommand{\su}[1]{\ensuremath{\mathsf{SU}_{#1}}}
\renewcommand{\u}[1]{\ensuremath{\mathsf{U}_{#1}}}

\renewcommand{\sp}[1]{\ensuremath{\mathsf{Sp}_{#1}}}

\newcommand{\res}{\operatorname{res}}

\newcommand{\lieG}{{\mathfrak{g}}}
\newcommand{\lieH}{{\mathfrak{h}}}

\newcommand{\lieP}{{\mathfrak{p}}}

\usepackage{parskip}

\makeatletter
\newsavebox{\@brx}
\newcommand{\llangle}[1][]{\savebox{\@brx}{\(\m@th{#1\langle}\)}%
	\mathopen{\copy\@brx\kern-0.5\wd\@brx\usebox{\@brx}}}
\newcommand{\rrangle}[1][]{\savebox{\@brx}{\(\m@th{#1\rangle}\)}%
	\mathclose{\copy\@brx\kern-0.5\wd\@brx\usebox{\@brx}}}
\makeatother

\begin{document}
\title{Ricci flow preserves positive sectional curvature on homogeneous spheres}

\author[J.~Devito]{Jason DeVito}
\address{Jason DeVito\\The University of Tennessee at Martin, USA}
\email{jdevito1@ut.utm.edu}
\author[D.~Gonz\'alez-\'Alvaro]{David Gonz\'alez-\'Alvaro}
\address{David Gonz\'alez-\'Alvaro\\Universidad Polit\'ecnica de Madrid, Spain.}
\email{david.gonzalez.alvaro@upm.es}
\author[M.~Zarei]{Masoumeh Zarei}
\address{Masoumeh Zarei\\Fachbereich Mathematik, Universität Hamburg, Hamburg, Germany}
\email{masoumeh.zarei@uni-hamburg.de}

\begin{abstract}
We prove that the Ricci flow preserves positive sectional curvature on homogeneous spheres and complex projective spaces. In conjunction with prior results, this completes the classification of which homogeneous spaces have positively curved metrics flowing outside the set of positively curved metrics and which do not.

\end{abstract}

\subjclass[2020]{Primary: 53C21. Secondary: 53E20.}

\keywords{Ricci flow, positive sectional curvature, spheres}
\maketitle

\section{Introduction}

Ricci flow is an evolution equation for Riemannian metrics on smooth manifolds, introduced by Hamilton~\cite{Ham82} to deform an initial metric towards a canonical or ``optimal" one. Because canonical metrics are characterized by distinguished curvature properties, understanding the evolution of curvature under the flow has been a central theme in the theory.

Positive scalar curvature is known to be preserved under the Ricci flow in all dimensions, and the same is true for the much stronger condition of positive curvature operator. Between these two notions lie many intermediate curvature conditions whose preservation under the flow is still not fully understood in all dimensions. In particular, there exist examples where positive Ricci and sectional curvature are not preserved. More generally, there exist infinitely many dimensions where certain intermediate curvature conditions fail to hold along the flow \cite{AN16,GZ24}.

In this article we restrict to the classical notion of positive sectional curvature, denoted by $\sec>0$. In his foundational paper \cite{Ham82}, Hamilton proved that $\sec>0$ is preserved in dimensions $2$ and $3$. In contrast, later works indicate that one should not expect the preservation of $\sec>0$ in any dimension $\geq 4$, see e.g. \cite[Remark~5.1]{BeK23}. However, constructing examples remains very challenging, and has been achieved only in dimensions $4$, $6$, $7$, $12$, $13$ and $24$ \cite{BeK23,BW07,CW15,GZ25}. Indeed, one needs an example of a manifold of $\sec >0$ to begin with, which so far has been a very complicated task itself, with the additional difficulty of being able to control the evolution of curvature.

The known examples of simply connected manifolds with $\sec > 0$ are compact rank one symmetric spaces, namely the projective spaces $\mathbb{S}^{n},\C P^{n},\H P^n$ and the Cayley plane, and certain spaces in dimensions $6$, $7$, $12$, $13$, $24$. Among them, the existing examples of metrics where $\sec >0$ is not preserved under the flow have a large isometry group, which is crucial to control the evolution of curvature. In fact, with the exception of the examples in dimension $4$, all of them are homogeneous. Our main result shows that compact rank one symmetric spaces cannot produce such examples.

\begin{theo}\label{THM:main}
Let $g$ be a homogeneous metric of $\sec>0$ on a simply connected compact rank one symmetric space. Then the Ricci flow of $g$ has $\sec>0$ for all positive times. 
\end{theo}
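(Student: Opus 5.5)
Since the Ricci flow of a homogeneous metric stays invariant under the same transitive group, the proof reduces to a finite-dimensional ODE on the space of $G$-invariant metrics for each transitive action on a CROSS. First I will invoke the classification of transitive actions on compact rank one symmetric spaces (Montgomery--Samelson, Borel, Onishchik). Besides the isometry groups of the symmetric metrics, this leaves only a few non-symmetric families:
\begin{itemize}
\item $\mathbb{S}^{2n+1}=\su{n+1}/\su{n}=\u{n+1}/\u{n}$;
\item $\mathbb{S}^{4n+3}=\sp{n+1}/\sp{n}$, together with $\sp{n+1}\u{1}/\sp{n}\u{1}$;
\item $\mathbb{S}^{7}=\spin{7}/G_2$ and $\mathbb{S}^{15}=\spin{9}/\spin{7}$;
\item $\C P^{2n+1}=\sp{n+1}/\sp{n}\u{1}$.
\end{itemize}
For the symmetric actions the invariant metric is unique up to scaling, so the flow is just rescaling and there is nothing to prove. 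The $\H P^n$ and Cayley plane cases are likewise rigid, because every homogeneous metric on them is symmetric.

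For each remaining family, I will write the invariant metrics as $g=x\,Q|_{\mathfrak{m}_1}+\dots$ on an $\Ad_H$-invariant decomposition $\lieP=\mathfrak{m}_1\oplus\mathfrak{m}_2\oplus\cdots$. The Berger-type spheres $\sp{n+1}/\sp{n}$ have a $3$-dimensional vertical part and a horizontal part. That vertical part carries left-invariant metrics of $\mathbb{S}^3$, which may be nondiagonal, but I will diagonalize them using the residual normalizer action. Next I will take the known characterizations of $\sec>0$ in these parameters, due to Verdiani--Ziller and Volper. After normalizing the horizontal scale to $1$, they say that $\sec>0$ holds exactly when all vertical parameters $t_i$ lie in $(0,4/3)$; for the $\mathbb{S}^3$-fibration, an extra condition of the form $t_i<t_j+t_k$-type inequalities is also needed. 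Spin cases and $\C P^{2n+1}$ have analogous criteria with the explicit thresholds $4/3$.

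The main step is to compute the Ricci tensor in these coordinates with the standard formula for homogeneous metrics (Wang--Ziller). This gives the normalized flow ODE for the ratios $t_i=x_i/x_0$. I will then show that the region $\{\sec>0\}$ is forward invariant. To do this, I check on each boundary face that the vector field points inward, or else that the face is invariant. For instance, the face $t_i=4/3$ should be crossed inward because $\dot t_i<0$ whenever $t_i\ge 1$ and the others are in range. Fixed-point analysis helps here: the round metric is the attracting Einstein point, and the Jensen metrics, with $t=1/(2n+3)$ or similar, lie well inside the region.

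I expect the main obstacle to be the nondiagonal $\mathbb{S}^{4n+3}=\sp{n+1}/\sp{n}$ family. There the region is three-dimensional, and its boundary involves coupled inequalities among $t_1,t_2,t_3$. My first attempt will be to order $t_1\le t_2\le t_3$, which the flow preserves, and then verify $\dot t_3<0$ when $t_3=4/3$ via an explicit polynomial sign estimate. Any leftover triangle-type face should reduce to a two-variable inequality that can be checked by hand. The $\mathbb{S}^{15}$ case with $\spin{9}$ will be handled similarly, using its own $\sec>0$ region.
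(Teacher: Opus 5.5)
Your overall architecture matches the paper's. You reduce to invariant metrics, use the Verdiani--Ziller description of the set $\mathcal P$ of $\sec>0$ metrics, and show that the flow crosses $\partial\mathcal P$ strictly inward; equivalently, the backward flow of each boundary metric leaves $\mathcal P$. For the two-parameter families, your step ``$\dot t<0$ at $t=\frac43$'' is exactly the paper's computation $\frac{d}{d\ell}\bigl(\frac43-\frac ts\bigr)=\frac83(r_t-r_s)>0$. The faces $t_i=\frac43$ and $V_i=0$ of the four-parameter family are also, as you say, two-variable sign checks.

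\textbf{The gap: your description of $\mathcal P$ is too large.} For the $\mathsf{Sp}_{n+1}$-invariant metrics $g_{x,y,z,1}$ on $\mathbb S^{4n+3}$, Verdiani--Ziller require more than $H_i=4-3t_i>0$ and the fiber conditions $V_i>0$. They also require
\[
P_1=\sqrt{H_1V_1}+yz-3\,|yz-y-z+x|>0
\]
together with its two permutations; these come from planes mixing vertical and horizontal directions. These conditions are not implied by the others. Take $x=\frac47$, $y+z=\frac45$, $yz=\frac17$. Then $H_1=\frac{16}{7}$, $V_1=\frac1{175}$ and $P_1=0$, the other six functions are positive, and $\partial P_1/\partial x<0$ there. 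So a slight increase of $x$ gives a metric satisfying $t_i\in(0,\frac43)$, $V_i>0$ and $t_i<t_j+t_k$, but not $\sec>0$. Your region strictly contains $\mathcal P$, so proving it is forward invariant would not prove the theorem.

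\textbf{The missing step: the faces $\{P_i=0\}$.} This is where most of the paper's work lies, and these faces cannot be checked by hand like triangle-type faces. Three things are needed:
\begin{enumerate}
\item Show $P_1$ is differentiable at the relevant boundary points. The term inside the absolute value never vanishes on $\{P_1=0\}\cap\partial\mathcal P$, and $H_1V_1>0$ away from the faces $H_1=0$ and $V_1=0$.
\item Solve $P_1=0$ for $x$. After squaring this gives two quadratics, one for each sign inside the absolute value. The branches $x_1,x_2,x_3$ must be excluded, leaving $x=x_4(a,b)$ with $(a,b)=(y+z,yz)$ in a small region $S$.
\item Prove that the flow derivative $(Fn+G)/(x^2b\sqrt{H_1V_1})$ is positive there. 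The paper does this with resultants, Sturm sequences and a topological analysis of the zero set of a high-degree polynomial $j(a,b)$ inside $S$.
\end{enumerate}
Your proposal has no plan for this step.

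A smaller point: your heuristic ``$\dot t_i<0$ whenever $t_i\ge1$'' is false. For example, $(x,y,z,s)=(1,1.2,0.8,1)$ lies in $\mathcal P$ but has $r_x<r_s$, so $x/s$ increases. What holds is the statement at $t_i=\frac43$ for $(y,z)\in(0,\frac43]^2$, and that is all you need.
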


Together with previous results, Theorem~\ref{THM:main} completes the classification of which manifolds admit a homogeneous $\sec>0$ metric evolving to metrics with non-positively curved planes under the flow.  To describe it, let us review the well-known classification of homogeneous spaces with $\sec>0$ (see e.g. \cite{WZ18}).

In the simply connected case, any such manifold is diffeomorphic to a compact rank one symmetric space, a Berger space $B^7,B^{13}$, a Wallach space $W^6,W^{12},W^{24}$, or an Aloff-Wallach space $W_{p,q}^7$. The space $B^7$ admits a unique homogeneous metric up to scaling \cite[p. 83]{Z-Survey}. This  metric is Einstein and hence is simply rescaled under the Ricci flow, so $\sec >0$ is trivially preserved.  In contrast, the rest of $B^{13}$, $W^6$, $W^{12}$, $W^{24}$, $W_{p,q}^7$ have been shown to carry $\sec>0$ metrics that flow to metrics with tangent planes of non-positive curvature.

In the non-simply connected case, the universal covering must be diffeomorphic to one of the spaces $\mathbb{S}^n$, $\mathbb{C}P^{n}$, $W^6$, $W^{12}$, $W^{24}$ or $W_{p,q}^7$, see Section~\ref{sec:thmcd} for details on the classification. The homogeneous Ricci flow is invariant under Riemannian coverings, thus we can use the existing results for the simply connected case, provided the metrics descend to the corresponding quotient. For quotients of $\mathbb{S}^n$ and $\mathbb{C}P^{n}$ it follows from Theorem~\ref{THM:main} that $\sec>0$ is preserved under the flow. For quotients of $W^6$, $W^{12}$, $W^{24}$ or $W_{p,q}^7$, an inspection of the metrics given in \cite{BW07,CW15,GZ25} shows that, for each   admissible quotient, there exists at least one $\sec >0$ metric on the universal covering space which descends to  the quotient,  and evolves to metrics with non-positively curved planes. Altogether we derive the following result.

\begin{theo}\label{THM:homogeneous_spaces}
A compact manifold $M$ has a homogeneous metric $g$ of $\sec>0$ for which its Ricci flow evolves to metrics with non-positively curved planes if and only if the universal cover of $M$ is diffeomorphic to a Wallach space $W^6$, $W^{12}$, $W^{24}$, an Aloff-Wallach space $W_{p,q}^7$, or the Berger space $B^{13}$.
\end{theo}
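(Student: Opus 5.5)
The plan is to obtain Theorem~\ref{THM:homogeneous_spaces} by combining the classification of homogeneous $\sec>0$ spaces with Theorem~\ref{THM:main} and the known counterexamples. The key tool is that the homogeneous Ricci flow commutes with Riemannian coverings. If $g$ is a homogeneous metric on $M$, its lift $\tilde g$ to the universal cover $\tilde M$ is homogeneous: lift a transitive isometry group, or use that the identity component of the isometry group of the pulled-back metric acts transitively. The Ricci flow $\tilde g(t)$ then stays invariant under the deck group, by uniqueness of the flow on compact manifolds. Hence $\tilde g(t)$ descends to $g(t)$, and $g(t)$ has $\sec>0$ if and only if $\tilde g(t)$ does. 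So the behaviour of $\sec>0$ along the flow is determined on the universal cover, provided we keep track of which metrics descend.

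For the ``only if'' direction, suppose $g$ is a homogeneous $\sec>0$ metric on $M$ whose flow leaves the set of positively curved metrics. By Bonnet--Myers and the classification (Section~\ref{sec:thmcd}), $\tilde M$ is diffeomorphic to a compact rank one symmetric space, $B^7$, $B^{13}$, $W^6$, $W^{12}$, $W^{24}$ or $W^7_{p,q}$. The lift $\tilde g$ is a homogeneous $\sec>0$ metric on $\tilde M$ whose flow also leaves $\sec>0$. Theorem~\ref{THM:main} rules out compact rank one symmetric spaces. For $B^7$, the unique homogeneous metric is Einstein and is only rescaled, which rules out $B^7$. This leaves exactly the listed spaces. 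One subtlety: the classification is up to diffeomorphism, while Theorem~\ref{THM:main} applies to homogeneous metrics on the space itself. This is fine, since a homogeneous metric on a manifold diffeomorphic to a CROSS is a homogeneous metric on that CROSS after pulling back by the diffeomorphism.

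For the ``if'' direction, take $M$ with $\tilde M$ one of $W^6$, $W^{12}$, $W^{24}$, $W^7_{p,q}$ or $B^{13}$, with $M$ carrying a homogeneous $\sec>0$ metric. Then $M=\tilde M/\Gamma$, and we need a homogeneous $\sec>0$ metric on $\tilde M$ that is $\Gamma$-invariant, that induces a homogeneous metric on the quotient, and whose flow loses $\sec>0$. For $\Gamma$ trivial this is exactly the cited results \cite{BW07,CW15,GZ25} (and \cite{BeK23} as relevant). For nontrivial $\Gamma$, I would list the admissible quotients from Section~\ref{sec:thmcd}. $B^{13}$ should have none, or they are handled similarly. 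The quotients of $W^6$, $W^{12}$, $W^{24}$ and $W^7_{p,q}$ arise from free actions of subgroups of $N_G(H)/H$ acting on $G/H$ from the right. For each family, I would check that the explicit counterexample metrics in \cite{BW07,CW15,GZ25} are invariant under these right actions. These metrics are $\Ad_H$-invariant inner products on $\lieP$ that are diagonal with respect to the isotropy decomposition, and the normalizer elements permute or preserve the summands. In cases where they permute summands, such as the $\mathbb{Z}_3$ or $S_3$ symmetries of Wallach flag manifolds, I would choose a metric in the counterexample family with the relevant parameters equal.

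The main obstacle is this last check: ensuring that for every admissible $\Gamma$, some metric in the published counterexample families is $\Gamma$-invariant and still exhibits loss of $\sec>0$. This is most delicate when $\Gamma$ forces several parameters to coincide, which shrinks the family. If a published example fails this, I would fall back on continuity: find a $\Gamma$-invariant $\sec>0$ metric near a symmetric one and verify directly, along the reduced ODE, that some curvature becomes non-positive.
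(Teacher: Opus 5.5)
The simply connected case and the ``only if'' direction follow the paper's argument: lift to the homogeneous universal cover, use that the Ricci flow commutes with Riemannian coverings (Lemma~\ref{LEM:RF_coverings}), then apply Theorem~\ref{THM:main} and the Einstein metric on $B^7$. The gap is in the ``if'' direction for non-simply connected $M$. The step you call the main obstacle is the entire content of that direction, and you leave it as a check still to be done.

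That check is not automatic. The B\"ohm--Wilking metrics on $W^{12}$, for example, do not descend to the $\mathbb{Z}_2$-quotients. Your fallback, ``find a $\Gamma$-invariant $\sec>0$ metric and verify loss of positivity along the reduced ODE,'' is not an argument. It amounts to reproving the results of \cite{CW15,GZ25} inside a symmetry-constrained family. Likewise, for $B^{13}$ you need the fact, from the classification in \cite{WZ18}, that it has no positively curved homogeneous finite quotients, not ``should have none, or handled similarly.''

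The missing inputs are specific facts from the literature.

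For $W^6,W^{12},W^{24}$: Synge's theorem forces $\Gamma\cong\mathbb{Z}_2$. The three $\mathbb{Z}_2$ subgroups of $N_G(H)/H\cong S_3$ are conjugate, so the corresponding quotients are isometric and treating one suffices. Then \cite[Theorem~2]{CW15} supplies a $G$-invariant, right $\mathbb{Z}_2$-invariant $\sec>0$ metric whose flow acquires non-positively curved planes.

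For $W^7_{p,q}$: by Onishchik's result, every such $M$ can be written as $(\su{3}/\mathsf{S}^1_{p,q})/\Gamma$ with $\Gamma\subset N_{\su{3}}(\mathsf{S}^1_{p,q})/\mathsf{S}^1_{p,q}$. Then \cite{GZ25} provides counterexamples invariant under the whole normalizer: $\u{2}$-invariant for $(p,q)=(1,1)$ and $\mathsf{T}^2$-invariant for $p\neq q$. These descend to every admissible quotient at once.

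This is also where your heuristic of diagonal metrics with some parameters set equal breaks down. For $(1,1)$ the group $N/H\cong\mathsf{SO}_3$ is three-dimensional and $\Gamma$ may be non-abelian. What is needed there is invariance under the full normalizer, not equality of certain parameters.
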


Theorem~\ref{THM:homogeneous_spaces} closes the search of examples where $\sec >0$ is not preserved in the homogeneous case and indicates the difficulty in finding new ones because the action of the isometry group would be of cohomogeneity at least one, where analyzing the evolution of curvature is much more complicated. So far this has only been achieved by Bettiol and Krishnan in \cite{BeK23} among cohomogeneity one metrics on $\mathbb S^4$ and $\mathbb C P^2$.

We now outline the proof of Theorem \ref{THM:main}. First, since Ricci flow preserves isometries, the flow of any homogeneous metric stays within the set of homogeneous metrics. Since $\mathbb HP^n$ and the Cayley plane admit a unique homogeneous metric up to scaling, we need only investigate the cases of $\mathbb{S}^n$ and $\mathbb{C}P^{n}$. Thus, we begin with a parameterization, due to Ziller \cite{Zi82}, of all possible homogeneous metrics on either $\mathbb{S}^n$ and $\mathbb{C}P^{n}$.  For $\mathbb{S}^n$, the subset $\mathcal{P}$ of those with positive sectional curvature was determined by Verdiani and Ziller \cite{VZ09} and is described by the positivity of a certain set of functions of the parameters.  We derive the corresponding set for $\mathbb{C}P^n$ in Proposition~\ref{prop:cppos}.  Now, we take a metric $g$ on the boundary of $\mathcal{P}$.  Because the metric is homogeneous, the Ricci flow PDE reduces to a system of ODEs, which, in particular, implies that the flow exists for short negative time.  By analyzing the sign of derivatives of the functions which define $\mathcal{P}$ we show that the backwards flow of $g$ deforms to a metric outside of $\mathcal{P}$. By uniqueness of flows, we deduce that the only metrics whose forward-time flows hit $\partial \mathcal{P}$ lie outside of $\mathcal{P}$.  Thus, we conclude that no flow beginning at a positively curved metric can ever reach $\partial \mathcal{P}$, completing the proof.

\textbf{Organization of the paper.} In Section~\ref{SEC:resultant_Sturm}, we recall the background on resultants and the Sturm sequence, two of the main tools we use to analyze the sign of derivatives.  In Section~\ref{SEC:HRF}, we recall the basics of the homogeneous Ricci flow and then precisely state and prove our main tool for showing that the set $\mathcal{P}$ of $\sec>0$ metrics is preserved.  In Section~\ref{SEC:homog_metrics}, we describe the set of homogeneous metrics on $\mathbb{S}^n$ and $\mathbb{C}P^n$ and explicitly determine the Ricci flow ODE of a general homogeneous metric, see \eqref{EQ:RF_2param} and \eqref{EQ:RF_4param}.  Section~\ref{sec:2param} contains the proof that $\mathcal{P}$ is preserved in the case where $\mathcal{P}$ is determined by a single function, see Theorem~\ref{THM:2-param}.  This, includes the case of $\mathbb{C}P^n$.

Sections~\ref{SEC:4param} through \ref{SEC:derivative_of_P1} are devoted to the proof that $\mathcal{P}$ is preserved in the case where $\mathcal{P}$ is defined by nine functions, see Theorem~\ref{THM:4-param}.  Beginning in Section~\ref{SEC:4param}, we precisely define the set $\mathcal{P}$ and introduce a new set of parameters which simplifies later calculations.  Crucially, we reduce the process of checking all nine functions to analyzing only three.  In Sections~\ref{sec:H} and \ref{sec:V}, we carry out the analysis of the first two of these three functions.  The analysis of the third is considerably more difficult and is split over Sections~\ref{SEC:P1_0} and \ref{SEC:derivative_of_P1}.  In the first of these sections, we explicitly determine a relatively small subset containing the zero set of this function.  In the second, we analyze the derivative when restricted to this small subset, finally proving Theorem~\ref{THM:4-param}.

We conclude the article with a proof of Theorem~\ref{THM:homogeneous_spaces} in Section~\ref{sec:thmcd}.

\textbf{Acknowledgments.} This work was initiated during the Workshop on Curvature and Global Shape, held in M\"unster in 2025, for which we thank the organizers for excellent working conditions. The authors would like to thank Renato Bettiol, Christoph B\"ohm and Wolfgang Ziller for helpful conversations. In addition, the first author gratefully acknowledges support from NSF DMS-2405266. The second author was supported by grant PID2024-158664NB-C21 from the Agencia Estatal de Investigación and the Ministerio de Ciencia, Innovación y Universidades
(Spain). The third author acknowledges support from the Deutsche Forschungsgemeinschaft (DFG, German Research Foundation) under Germany’s Excellence Strategy -- EXC 2121 ``Quantum Universe'' -- 390833306.

\section{Resultants and the Sturm sequence}\label{SEC:resultant_Sturm}

In this section, we outline two classical techniques in the study of the zeros of polynomial systems of equations: resultants and Sturm sequences. Note that most of our computations involving resultants and Sturm sequences are carried out using Maple. We therefore also indicate the specific Maple commands that are used.

We begin with resultants. Suppose $R$ is an integral domain and consider two polynomials $f(x),g(x)\in R[x]$.  Then we may write $f(x) = \sum_{i=0}^n a_i x^i$ and $g(x) = \sum_{j=0}^m b_j x^j$ where $a_n$ and $b_m$ are both non-zero.  Then the \textit{resultant} of $f$ and $g$, denoted by $\res(f,g)$, is defined as the determinant of the corresponding $(m+n)\times (m+n)$ Sylvester matrix: 
$$
\operatorname{res}(f,g) = \det \begin{bmatrix} a_n & a_{n-1} & a_{n-2} &  \cdots & a_0 & 0 & 0 & \cdots & 0 & 0\\
0 & a_n & a_{n-1} & \cdots & a_1 & a_0 & 0 & \cdots & 0 & 0 \\ 
0 & 0 & a_n & \cdots & a_2 & a_1 & a_0 &\cdots & 0 & 0\\
\vdots & \vdots & \vdots & \vdots & \vdots &\vdots & \vdots & \vdots & \vdots & \vdots\\
0 & 0 & 0 & \cdots  &\cdots & \cdots&\cdots & \cdots &  a_1 & a_0\\
 b_m & b_{m-1} & b_{m-2} &  \cdots & b_0 & 0 & 0 & \cdots & 0 & 0\\
0 & b_m & b_{m-1} & \cdots & b_1 & b_0 & 0 & \cdots & 0 & 0 \\ 
0 & 0 & b_m & \cdots & b_2 & b_1 & b_0 &\cdots & 0 & 0\\
\vdots & \vdots & \vdots & \vdots & \vdots &\vdots & \vdots & \vdots & \vdots & \vdots\\
0 & 0 & 0 & \cdots  &\cdots & \cdots&\cdots & \cdots & b_1 & b_0\\ 
\end{bmatrix}\in R,
$$ 
where the first $m$ rows are cyclic permutations of $(a_n,a_{n-1}, a_{n-2},..., a_0,0,...0)$ and the last $n$ rows are cyclic permutations of $(b_m, b_{m-1},...,b_0,0,...,0)$.  If one works in an algebraically closed field containing $R$, the resultant of $f$ and $g$ is equal to $a_n^m b_m^n \prod_{i=1}^n\prod_{j=1}^m(\alpha_i-\beta_j)$, where the $\alpha_i$'s are the roots of $f$ and the $\beta_j$'s are the roots of $g$. Clearly, then $\operatorname{res}(f,g) = 0$ if and only if $f$ and $g$ share a common root in some algebraically closed field containing $R$.

We are specifically interested in understanding the common zeros of two polynomials $f,g\in \mathbb{Z}[a,b] \cong (\mathbb{Z}[a])[b] \cong (\mathbb{Z}[b])[a]$.  Thus, we take $R = \mathbb{Z}[a]$ or $R = \mathbb{Z}[b]$.  We use the notation $\operatorname{res}_b(f,g)\in \mathbb{Z}[a]$ when viewing $f$ and $g$ as polynomials in the variable $b$ with coefficients in $\mathbb{Z}[a]$.  Then $\operatorname{res}_b(f,g)$ is single-variable polynomial in $a$, and if $(a_0,b_0)$ is a common zero of $f$ and $g$, then $a_0$ must be a zero of $\operatorname{res}_b(f,g)$. In Maple, given two polynomials $f,g$ in the variables $a$ and $b$, the command \verb+resultant(f,g,b)+ computes $\operatorname{res}_b(f,g)$.

Similarly, we use the notation $\operatorname{res}_a(f,g)\in \mathbb{Z}[b]$ to denote the resultant of $f$ and $g$ when viewed as polynomials in $a$ with coefficients in $\mathbb{Z}[b]$.  It is a single-variable polynomial in $b$ with the property that if $(a_0,b_0)$ is a common zero of $f$ and $g$, then $b_0$ is a zero of $\operatorname{res}_a(f,g)$.

\bigskip

We now turn attention to Sturm sequences.  To this end,  suppose that $f\in \mathbb{R}[x]$ is a single-variable real polynomial. From $f$, we construct a finite length sequence of polynomials $f_1,f_2,...$ of decreasing degree, called the \textit{Sturm sequence} of $f$, as follows.  The first term is $f_1 = f$ and the second term is the derivative, $f_2 = f'$.  Assume $f_i$ and $f_{i+1}$ are defined, we define $f_{i+2}$ to be $-\operatorname{rem}(f_i,f_{i+1})$, where $\operatorname{rem}$ denotes the remainder upon dividing $f_i$ by the smaller degree $f_{i+1}$.  We stop the process once we reach some $f_i$ of degree $0$.

Given an element $b\in \mathbb{R}$, the Sturm sequence of $f$ yields a real-valued finite-length sequence $f_1(b),f_2(b),\dots,f_k(b)$.  Deleting any occurrences of $0$ from this sequence, we let $V(b)$ denote the number of times the sign changes in the resulting sequence. In order to state Sturm's Theorem, recall that a polynomial $f$ is called square-free if it cannot be factored as $g^2 h$ for a non-unit (i.e., a positive degree) polynomial $g$ and $h$. Observe that every root of a square-free polynomial has multiplicity one.

\begin{theorem}[Sturm's Theorem]\label{T:Sturm_root}
If $f$ is square-free and $a<b$ then the number $V(b) - V(a)$ is equal to the number of roots of $f$ in the half open interval $(a,b]$.
\end{theorem}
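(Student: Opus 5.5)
The plan is to follow the classical proof of Sturm's Theorem: track the integer $V(x)$ as $x$ moves from $a$ to $b$, and show it changes only when $x$ crosses a root of $f$, by exactly one unit each time. Write $f_1=f, f_2=f',\dots,f_k$ for the Sturm sequence. By construction $f_i = q_i f_{i+1} - f_{i+2}$, so the $f_i$ run through the Euclidean algorithm on $(f,f')$. Hence $f_k$ is, up to a constant, $\gcd(f,f')$. Since $f$ is square-free, $\gcd(f,f')$ is a nonzero constant, so $f_k$ never vanishes.

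First, consecutive terms have no common zero. If $f_i(c)=f_{i+1}(c)=0$, the recursion propagates downward to $f_k(c)=0$, which is impossible. Second, $V$ is locally constant on the complement of the finite set $Z$ of zeros of all the $f_i$, since no term changes sign there. So I only need to compare $V(c^-)$, $V(c)$ and $V(c^+)$ at each $c\in Z$.

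At a zero $c$ of an intermediate term $f_i$ with $2\le i\le k-1$, the relation $f_{i-1}(c)=-f_{i+1}(c)\ne 0$ shows that the neighbours have opposite signs near $c$. The triple $(f_{i-1},f_i,f_{i+1})$ therefore contributes exactly one sign change, whatever the sign of $f_i$ (deleting a $0$ at $x=c$ included). So crossing such a zero leaves $V$ unchanged.

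At a root $c$ of $f=f_1$, the root is simple, so $f'(c)\ne0$. Then $f f'$ is negative just left of $c$ and positive just right of $c$. At $x=c$ the zero term is deleted, so $V(c)=V(c^+)$. The pair $(f_1,f_2)$ contributes one change on the left and none on the right, so $V$ jumps by exactly one unit across $c$. This is also why the count is over the half-open interval $(a,b]$: a root at $b$ is counted, because $V(b)$ already equals the right-hand value; a root at $a$ is not, because $V(a)$ is already the post-crossing value. Summing the unit jumps over the roots in $(a,b]$ gives that the difference of the sign-change counts at $b$ and $a$ equals the number of such roots, as Theorem~\ref{T:Sturm_root} asserts.

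The step I expect to need the most care is the orientation of that unit jump, together with how it interacts with the deleted zeros at the endpoints. The local sign analysis shows that, reading the sequence $f_1(x),f_2(x),\dots$ as defined, each simple root removes one sign change as $x$ increases. With the sequence read literally, this yields $V(a)-V(b)$ rather than $V(b)-V(a)$. To obtain the formula exactly as worded, I would fix the orientation convention so that crossing a root adds one to the count, and then verify carefully that the endpoint conventions still give the half-open interval $(a,b]$. The rest of the argument is bookkeeping.
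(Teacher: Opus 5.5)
Your argument is the classical proof, and its substance is right. The paper gives no proof of its own; it quotes Sturm's theorem as a known result. The ingredients you use are all sound:
\begin{itemize}
\item the relation $f_i=q_if_{i+1}-f_{i+2}$;
\item the nonzero constant $f_k$;
\item the absence of common zeros of consecutive terms;
\item the single sign change contributed by a triple $(f_{i-1},f_i,f_{i+1})$ at a zero of an intermediate $f_i$, which also covers several non-adjacent terms vanishing at the same point;
\item the loss of exactly one sign change when $x$ crosses a simple root $c$ of $f$, with $V(c)=V(c^+)$.
\end{itemize}
Together these prove that the number of roots of $f$ in $(a,b]$ equals $V(a)-V(b)$.

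The gap is your final paragraph. There is no orientation convention left to choose. The paper fixes $f_2=f'$ and defines $V$ as the plain number of sign changes after deleting zeros. With these definitions, your own computation shows that $V$ drops at each root. So the statement as printed, with $V(b)-V(a)$, is false. For $f(x)=x$ the Sturm sequence is $(x,1)$, so $V(-1)=1$ and $V(1)=0$, while $(-1,1]$ contains one root.

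Changing the definitions, say to $f_2=-f'$, would prove a different statement. It would not even give your interval. With zeros deleted, $V(c)$ always equals the one-sided value with fewer sign changes. Hence any convention in which crossing a root adds a change counts roots in $[a,b)$ rather than $(a,b]$, and the endpoint verification you planned would fail.

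The right conclusion is this. The printed formula has a sign slip, repeated in Remark~\ref{REM:Sturm}. Your proof establishes the intended classical statement with $V(a)-V(b)$. Nothing in the paper is affected, since Sturm's theorem is only used there to count roots.
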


\begin{remark}\label{REM:Sturm}
In practice, instead of checking the hypothesis that $f$ is square-free, one applies Sturm's Theorem not to $f$, but rather to $f/\operatorname{gcd}(f,f')$.  This polynomial is square-free and it has the same roots (with possibly different multiplicities) as $f$. In Maple, if $f$ is a single variable polynomial in the variable $x$, then \verb+sturmseq(f,x)+ computes the Sturm sequence of $f/\gcd(f,f')$. Calling the output of this $s$, then \verb+sturm(s,x,a,b)+ computes $V(b)-V(a)$.
\end{remark}

\section{Homogeneous Ricci flow and invariance of positive curvature}\label{SEC:HRF}

In this section we first recall some basic properties of the homogeneous Ricci flow (see e.g. \cite{La13} for details). We then study the conditions under which it preserves positive curvature.

Let $(M,g)$ be a compact homogeneous Riemannian manifold, and let $(M,g(\ell))$ be the Ricci flow of $g$, i.e. $g(0)=g$. Note that we are using $\ell$ to denote the time parameter in the Ricci flow, since the more natural variable $t$ will be used later to parameterize the metric. Because $(M,g)$ is homogeneous, there is a compact Lie group $G$ acting on $(M,g)$ transitively and by isometries. Since the isometry group is preserved under the flow, there is a presentation of $M$ as a homogeneous space $(M, g(\ell)) = (G/H, g (\ell))$ for all $\ell$ with the same reductive decomposition $\lieG= \lieH\oplus \lieP$, where $\lieH$ and $\lieG$ denote the Lie algebras of $H$ and $G$, and $\lieP$ denotes the orthogonal subspace to $\lieH$ with respect to some bi-invariant metric on $G$. Recall that the tangent space of $G/H$ at the origin is identified with $\lieP$.

Let $\langle\cdot,\cdot\rangle_{\ell} =g(\ell)(o)$ denote the  $\Ad(H)$-invariant inner product on $\lieP$, which is determined by the $G$-invariant metric $g(\ell)$ at the origin $o\in G/H$. Then the Ricci flow equation is equivalent to the ODE
\begin{equation}\label{Ric_ODE}
\frac{d}{d \ell}\langle\cdot,\cdot\rangle_{\ell}  = -2\Ric (\langle\cdot,\cdot\rangle_{\ell}), 
\end{equation}
where $\Ric(\langle\cdot,\cdot\rangle_{\ell}) = \Ric(g(\ell))(o)$. It follows that the maximal interval of existence of $g(\ell)$ is of the form $ (L_{-}, L_{+})$, where $-\infty\leq L_{-}<0<L_{+} \leq \infty$.

Suppose that $M$ admits homogeneous metrics of positive sectional curvature. The following result gives sufficient conditions for the homogeneous Ricci flow to preserve positive curvature.

\begin{proposition}\label{prop:RF_invariance}
    Let $M$ be a compact manifold with a transitive action of  a compact Lie group $G$, and let $\mathcal R$ be the space of $G$-invariant metrics. Assume that the subspace $\mathcal P\subset\mathcal R$ of $\sec>0$ metrics is non-empty, and let $\partial\mathcal P$ be  its boundary in $\mathcal R$. Suppose that for each $g\in\partial \mathcal P$, there is some $\varepsilon>0$  such that $g(\ell)\notin\mathcal P$ for all $\ell\in (-\varepsilon,0)$, where $g(\ell)$ denotes the Ricci flow of $g=g(0)$. Then the Ricci flow of every metric in $\mathcal P$ stays in $\mathcal P$ for all positive times of existence. 
\end{proposition}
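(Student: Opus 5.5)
I would argue by contradiction, using continuity of the flow together with uniqueness of solutions of the ODE \eqref{Ric_ODE}. Fix $g_0\in\mathcal P$ and let $g(\ell)$, $\ell\in[0,L_+)$, be its Ricci flow. The flow preserves $G$-invariance, so it is a curve in the finite-dimensional space $\mathcal R$. The map $\ell\mapsto g(\ell)$ is continuous into $\mathcal R$, because it solves the smooth ODE \eqref{Ric_ODE} on the finite-dimensional space of $\Ad(H)$-invariant inner products on $\lieP$. Suppose that $g(\ell_1)\notin\mathcal P$ for some $\ell_1\in(0,L_+)$, and set
$$\ell_0=\inf\{\ell\in[0,L_+): g(\ell)\notin\mathcal P\}.$$
I would first observe that $\mathcal P$ is open in $\mathcal R$. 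Sectional curvature at the origin depends continuously on the inner product, and the Grassmannian of $2$-planes in $\lieP$ is compact, so the minimal sectional curvature is a continuous function on $\mathcal R$. By homogeneity, checking the origin suffices. Openness gives $\ell_0>0$, and also $g(\ell_0)\notin\mathcal P$, since otherwise a neighbourhood of $\ell_0$ would lie in $\mathcal P$, contradicting the definition of the infimum. Moreover $g(\ell)\in\mathcal P$ for all $\ell\in[0,\ell_0)$, so $g(\ell_0)$ is a limit of points of $\mathcal P$ that is not itself in $\mathcal P$. Hence $g(\ell_0)\in\overline{\mathcal P}\setminus\mathcal P=\partial\mathcal P$, where the last equality uses openness again.

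Next I would apply the hypothesis to $h:=g(\ell_0)\in\partial\mathcal P$. It gives $\varepsilon>0$ such that the Ricci flow $h(s)$ of $h$ satisfies $h(s)\notin\mathcal P$ for all $s\in(-\varepsilon,0)$. The curve $s\mapsto g(\ell_0+s)$ solves the same ODE \eqref{Ric_ODE} with the same value $h$ at $s=0$, and it is defined for $s\in(-\ell_0,0]$. By uniqueness of ODE solutions (the right-hand side is smooth, hence locally Lipschitz, on $\mathcal R$), the two curves agree where both are defined. That is, $h(s)=g(\ell_0+s)$ for $s\in(-\min(\varepsilon,\ell_0),0]$. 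Choosing $s$ in this interval with $s<0$ gives $g(\ell_0+s)\notin\mathcal P$ with $0\le \ell_0+s<\ell_0$. This contradicts the minimality of $\ell_0$.

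The only step needing care is the identification $g(\ell_0)\in\partial\mathcal P$. It rests on two facts:
\begin{enumerate}
\item $\mathcal P$ is open, which follows from continuity of curvature in the metric and compactness of the Grassmannian;
\item backward-time uniqueness holds, so the backward flow of $h$ given in the hypothesis is the flow we started with.
\end{enumerate}
Both are routine in this finite-dimensional setting. The genuine work in applying the proposition lies in verifying its hypothesis for concrete $\mathcal P$, not in this argument.
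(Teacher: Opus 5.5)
Your proposal is correct and follows essentially the same argument as the paper. You take the first exit time $\ell_0$, which is positive and attained because $\mathcal P$ is open, observe that $g(\ell_0)\in\partial\mathcal P$, and use uniqueness for the homogeneous Ricci flow ODE to identify the backward flow of $g(\ell_0)$ with the original flow, contradicting minimality of $\ell_0$; you additionally spell out why $\mathcal P$ is open and why the infimum is attained, which the paper leaves implicit.
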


\begin{proof}
     Suppose $h\in\mathcal P$ and consider its Ricci flow $h(\ell)$ with $h(0) = h$ and $\ell\in (L_-,L_+)$, where $L_-<0$ and $L_+>0$. Assume for a contradiction that $h(\ell_0) \notin\mathcal P$ for some $\ell_0\in (0,L_+)$.   Because $\mathcal{P}$ is open, there is minimum such time; without loss of generality we assume $\ell_0$ denotes this time.  It follows that $h(\ell_0)\in \partial\mathcal{P}.$ 

     Let $g:=h(\ell_0)$ and let $g(\ell)$ denote its Ricci flow.  Then by uniqueness of the Ricci flow, $g(\ell) = h(\ell+\ell_0)$ for all $\ell\in(L_--\ell_0,L_+-\ell_0)$.  By assumption, as $g(0)= h(\ell_0)\in \partial \mathcal{P}$,  there is some $\varepsilon>0$ such that $g(\ell)\notin\mathcal P$ for all $\ell\in (-\varepsilon,0)$. Then, for any $\ell\in(-\varepsilon,0)$ with $|\ell|<\ell_0$, we have $0< \ell+\ell_0 < \ell_0$ and $h(\ell + \ell_0) \notin\mathcal P$. This implies that the Ricci flow of $h$ leaves $\mathcal{P}$ at a time earlier than $\ell_0$, contradicting the choice of $\ell_0$.
\end{proof}

\section{Homogeneous metrics on CROSSes and their Ricci flow}\label{SEC:homog_metrics}

Homogeneous metrics on compact rank one symmetric spaces (CROSSes) have been classified. The spaces $\mathbb{S}^{2n}$, $\C P^{2n}$, $\H P^n$, and the Cayley plane only admit one homogeneous metric, up to scaling. This metric is symmetric and hence Einstein, so the Ricci flow obviously preserves $\sec>0$.

Thus, the only non-trivial cases for this work are odd-dimensional spheres $\mathbb{S}^{2n+1}$ and complex projective spaces $\C P^{2n+1}$, where $n\geq 1$. There are two ways to describe homogeneous metrics on them: via Hopf fibrations, or using their homogeneous space structure. Here we use the Hopf fibration description, and we refer to \cite[Section~2]{BPRZ21} for further details.

For the odd-dimensional spheres we use Hopf fibrations:
\begin{align*}
\mathbb S^{1}\to\mathbb S^{2n+1}\to\mathbb C P^n,\qquad \mathbb S^{3}\to\mathbb S^{4n+3}\to\mathbb H P^n,\qquad \mathbb S^{7}\to\mathbb S^{15}\to\mathbb S^{8}.
\end{align*}
Recall that these fibrations become Riemannian submersions when endowing the total space with the round metric $g_{\text{round}}$ of constant curvature one. Now, using the submersion structure, in each case we can define a metric $g_{t,s}$ by scaling $g_{\text{round}}$ by $t$ in the vertical directions and by $s$ in the horizontal directions, where $(t,s)\in\R_+^2$. Throughout this work $\mathbb{R}^k_+$ denotes the set of points in $\mathbb{R}^k$ whose coordinates are all positive. 

Apart from this two-parameter families of metrics, there is a four parameter family of metrics $g_{x,y,z,s}$ on $\mathbb{S}^{4n+3}$ associated to $\mathbb S^{3}\to\mathbb S^{4n+3}\to\mathbb H P^n$, with $(x,y,z,s)\in\R_+^4$. As we will explain, the homogeneous structure on $\mathbb {S}^{4n+3}$ yields a seven-parameter family of metrics; however, up to isometry, these metrics can be taken to be part of a four parameter family $g_{x,y,z,s}$ which is preserved under the Ricci flow. Here $s$ denotes the scaling of $g_{\text{round}}$ in the horizontal directions and  $(x,y,z)$ corresponds to scalings in vertical directions.

We now turn to the case of complex projective spaces $\mathbb{C}P^{2n+1}$, which admit a two-parameter family of homogeneous metrics.  In order to describe them, we begin with the Hopf fibration $\mathbb{S}^3\rightarrow \mathbb{S}^{4n+3}\rightarrow \mathbb{H}P^n$, a principal $\mathbb{S}^3$-bundle.  If we equip $\mathbb{S}^{4n+3}$ with the above metric $g_{t,s}$, the $\mathbb{S}^3$-action is isometric and free and thus so is the restriction of this action to  $\mathbb{S}^1\subseteq \mathbb{S}^3$.  Then the quotient $\mathbb{C}P^{2n+1} = \mathbb{S}^{4n+3}/\mathbb{S}^1$ inherits a metric $g^{\mathbb{C}P}_{t,s}$, which we will abbreviate as $g_{t,s}$ when there is no risk of confusion.  The canonical $\sp{n}$-action on $\mathbb{S}^{4n+3}$ commutes with the $\mathbb{S}^3$-action, so it descends to a transitive isometric action on $(\mathbb{C}P^{2n+1},g^{\mathbb{C}P}_{t,s})$, so these metrics are all homogeneous.

Alternatively, by quotienting the bundle 
$$
\mathbb{S}^3\rightarrow \mathbb{S}^{4n+3}\rightarrow \mathbb{H}P^n
$$ 
by the $\mathbb{S}^1$-subaction, we obtain a bundle 
$$
\mathbb{S}^2\rightarrow \mathbb{C}P^{2n+1}\rightarrow \mathbb{H}P^n.
$$
The metric $g^{\mathbb{C}P}_{t,s}$ is obtained by scaling the Fubini-Study metric on $\mathbb{C}P^{2n+1}$ by a factor of $t$ in the direction of the $\mathbb{S}^2$ fibers and by a factor of $s$ in the horizontal direction.

Next we discuss the Ricci tensor of the metrics above. We start with $g_{t,s}$, either on a sphere or on a complex projective space, which we denote generically by $M$. We need to recall the homogeneous structure of $M$. There is a transitive $G$-action on $M$ where $G$ equals $\su{n+1}$ in the case of  $\mathbb S^{1}\to\mathbb S^{2n+1}\to\mathbb C P^n$, $G=\sp{n+1}\sp{1}$ in the case of $\mathbb S^{3}\to\mathbb S^{4n+3}\to\mathbb H P^n$, $G=\spin{9}$ in the case of $\mathbb S^{7}\to\mathbb S^{15}\to\mathbb S^{8}$, and $G=\sp{n+1}$ in the case of $\mathbb{S}^2\rightarrow \mathbb{C}P^{2n+1}\rightarrow \mathbb{H}P^n$. The corresponding isotropy group is $H=\su{n}$, $\sp{n}\sp{1}$, $\spin{7}$ and $\sp{n}\u{1}$, respectively.

In all cases, the metrics $g_{t,s}$ are in one-to-one correspondence with the $G$-invariant metrics on $M$. In more detail, if $o\in M\cong G/H$ denotes the origin and $\lieP$ is identified with the tangent space of $M$ at $o$, recall from Section~\ref{SEC:HRF} that any $G$-invariant metric is determined by the $\Ad(H)$-invariant inner product $g(o)$ on $\lieP$. In order to describe all such inner products we will make use of an auxiliary bi-invariant metric $Q$ on $G$. Note that, apart from the case where $G = \sp{n+1}\sp{1}$, $Q$ is unique up to scaling.  In the exceptional case, it follows from \cite[Case (4) and Case (5), pg. 352 and 353]{Zi82} that the specific choice of $Q$ does not alter the set of $G$-invariant metrics on $G/H$, up to isometry.

The $\Ad(H)$-representation $\lieP$ splits orthogonally into two irreducible inequivalent subrepresentations $\lieP=\lieP_0\oplus\lieP_1$, where $\lieP_0$ is naturally identified with the vertical subspace $V$ of the Hopf fibration at $o$ and $\lieP_1$ with the horizontal subspace $H$. By Schur's Lemma any $\Ad(H)$-invariant inner product $\langle\cdot,\cdot\rangle$ on $\lieP$ is of the form $\langle\cdot,\cdot\rangle=x_0 Q(\cdot,\cdot)\vert_{\lieP_0}+x_1Q(\cdot,\cdot)\vert_{\lieP_1}$, for some $x_0,x_1\in\R_+$. Let $\langle\cdot,\cdot\rangle$ denote the $\Ad(H)$-invariant inner product which corresponds to the round metric of curvature one in the case of the spheres, and to the Fubini-Study metric in the case of $\C P^{2n+1}$. Then at the point $o$ we have the natural identification:
\begin{equation}\label{EQ:2-par-Diag}
g_{t,s}(\cdot,\cdot)=t\langle\cdot,\cdot\rangle\vert_V + s\langle\cdot,\cdot\rangle\vert_H.
\end{equation}
The Ricci tensor of the metric $g_{t,s}$ is diagonal, i.e. it is given by 
$$
\Ric(\cdot,\cdot)=r_t t\langle\cdot,\cdot\rangle\vert_V + r_s s\langle\cdot,\cdot\rangle\vert_H,
$$
for some $r_t,r_s\in\R$ depending on $(t,s)$. The values of $r_t,r_s$ can be deduced e.g. from the curvature computations in \cite{Zi82}. In the case of the spheres, they are:
$$
r_t=(d_v-1)\frac{1}{t} + d_H \frac{t}{s^2},\qquad r_s=\frac{d_H-1+3d_V}{s}-2d_V\frac{t}{s^2},
$$
where $d_V$ and $d_H$ are the dimensions of the vertical and horizontal subspaces of the corresponding Hopf fibrations. In the case of $\C P^{2n+1}$, the formulas can be deduced from \cite[pg. 357]{Zi82} (however, there appears to be a small error, as the formulas are interchanged):
$$
    r_t = \frac{4}{t} +\frac{4nt}{s^2},\qquad r_s = \frac{4n+8}{s}-\frac{4t}{s^2}.
$$
To describe the Ricci flow equation, note that the     decomposition $V\oplus H$ of a metric $g_{t,s}$ remains orthogonal under the flow because the Ricci flow preserves isometries. Thus, the diagonal form of $g_{t,s}$ and its Ricci tensor $\Ric$ implies that the Ricci flow of a metric $g_{t_0,s_0}$ is a family of metrics $g_{t(\ell),s(\ell)}$ such that the functions $t(\ell),s(\ell)$ satisfy the ODE system
\begin{equation}\label{EQ:RF_2param}
    \begin{cases}
      t'(\ell)=-2r_st(\ell),\\
      s'(\ell)=-2r_ts(\ell),\\
     (t(0),s(0))=(t_0,s_0), 
    \end{cases}
\end{equation}
with $\ell\in(L_-,L_+)$ for some $-\infty\leq L_- <0<L_+\leq\infty$. Also, note that both $r_t,r_s$ depend on $t(\ell)$ and $s(\ell)$.

We now describe in more detail the metrics $g_{x,y,z,s}$ on $\mathbb{S}^{4n+3}$, we refer to \cite{BPRZ21,Sb22,VZ09,Zi82} for details. These metrics all arise as $\sp{n+1}$-invariant metrics on $\sp{n+1}/\sp{n}=\mathbb{S}^{4n+3}$. To see this, endow $\sp{n+1}$ with some bi-invariant metric $Q$. The $\sp{n}$-representation $\lieP$ splits orthogonally into irreducible subrepresentations as $\lieP=\mathbb{R}\oplus \mathbb{R}\oplus\mathbb{R}\oplus\lieP_1$, where $\mathbb{R}$ denotes the trivial $\sp{n}$-representation.  As above, $\lieP_1$ is identified  with the horizontal subspace $H$, while $\mathbb{R}\oplus \mathbb{R}\oplus \mathbb{R}$ is identified with the vertical subspace $V$. We identify each $\mathbb{R}$ factor with the line spanned by $i,j,k\in\mathbb H$ respectively, so that $V$ is identified with $\operatorname{span}\{i,j,k\} = \operatorname{Im}(\mathbb{H})$. The basis vectors $i,j$, and $k$ are not orthogonal with respect to a general $\sp{n}$-invariant inner product on $\operatorname{Im}(\mathbb{H})$, but Ziller \cite[Case (4), pg. 352]{Zi82} showed using the action by $N(\sp{n})/\sp{n}$ that every $\sp{n}$-invariant inner product on $\operatorname{Im}(\mathbb{H})$ is isometric to one where $i$, $j$, and $k$ are orthogonal (but not necessarily of unit length). So, up to isometry, we may write a general metric on $G/H$ as 
\begin{equation}\label{EQ:4-par-Diag}
    g_{x,y,z,s}(\cdot,\cdot)=x\langle\cdot,\cdot\rangle\vert_{(i)} + y\langle\cdot,\cdot\rangle\vert_{(j)} + z\langle\cdot,\cdot\rangle\vert_{(k)} + s\langle\cdot,\cdot\rangle\vert_H.
\end{equation}
where $\langle \cdot,\cdot\rangle$ denotes the round metric of curvature one, and $(i),(j),(k)$ denote the subspaces spanned by $i,j,k$. We will later show that this form is invariant under the Ricci flow.  That is, we will show that if $i$, $j$, and $k$ start orthogonal, then they remain orthogonal for all positive time.

The Ricci tensor of the metric $g_{x,y,z,s}$ is also diagonal: 
$$
\Ric(\cdot,\cdot)=r_x x\langle\cdot,\cdot\rangle\vert_{(i)} + r_y y\langle\cdot,\cdot\rangle\vert_{(j)} + r_z z\langle\cdot,\cdot\rangle\vert_{(k)} + r_s s\langle\cdot,\cdot\rangle\vert_H,
$$
for some $r_x,r_y,r_z,r_s\in\R$ depending on $(x,y,z,s)$. Their values are known (see e.g. \cite{BPRZ21,Sb22,Zi82}):
\begin{align}\label{EQ:Ricci_eigen_4param}
\begin{split}
r_x &= \frac{4}{x}+\frac{4nx}{s^2}+2\frac{x^2-y^2-z^2}{xyz},\qquad\quad
r_y = \frac{4}{y}+\frac{4ny}{s^2}+2\frac{y^2-x^2-z^2}{xyz},\\
r_z &= \frac{4}{z}+\frac{4nz}{s^2}+2\frac{z^2-x^2-y^2}{xyz},\qquad\quad
r_s = \frac{4(n+2)}{s} -2\frac{x+y+z}{s^2}.
\end{split}
\end{align}
Let us describe the Ricci flow, we refer to \cite[Sections~2.1 and 4]{Sb22} for further details. First, as in the case of the metrics $g_{t,s}$, the decomposition $V\oplus H$ of the metrics $g_{x,y,z,s}$ remains orthogonal under the flow. Second, the decomposition $V=(i)\oplus(j)\oplus (k)$ also stays orthogonal. To show it, let $h_i$ be the endomorphism of $V\oplus H$ acting by rotation on $V$ by $\pi$ around the $i$-axis. That is 
$$
h_i(i)=i,\quad h_i(j)=-j,\quad h_i(k)=-k,
$$
and as the identity on $H$. We define $h_j$ and $h_k$ similarly. Note that all these maps can be realized via the conjugation action by $\mathrm{N(\sp{n})}/\sp{n}$ on $\sp{n+1}/\sp{n}$, so they are diffeomorphisms of $\sp{n+1}/\sp{n}$. By inspection, they are indeed isometries of any $\sp{n+1}$-invariant metric of the form $g_{x,y,z,s}$. Since the Ricci flow preserves isometries, it follows that $h_i, h_j,$ and $h_k$ remain isometries of the Ricci flow of $g_{x,y,z,s}$, denoted by $g(\ell)$. Then we have 
$$
g(\ell)(i, k)=g(\ell)(h_i(i), h_i(k))=g(\ell)(i, -k).
$$
This implies that $g(\ell)(i, k)=0$, and similarly, $g(\ell)(j, k)=g(\ell)(i, j)=0$. Moreover, since  $\Ric(g(\ell))$ is invariant under the isometries of $g(\ell)$, in a similar manner, we conclude that $\Ric(g(\ell))$ remains diagonal along the flow. Altogether, we conclude that the Ricci flow of a metric $g_{x_0,y_0,z_0,s_0}$ is the family of metrics $g_{x(\ell),y(\ell),z(\ell),s(\ell)}$ such that the functions $x(\ell),y(\ell),z(\ell),s(\ell)$ satisfy the ODE system
\begin{equation}\label{EQ:RF_4param}
    \begin{cases}
      x'(\ell)=-2r_x x(\ell),\\
      y'(\ell)=-2r_y y(\ell),\\
      z'(\ell)=-2r_z z(\ell),\\
      s'(\ell)=-2r_ts(\ell),\\
     (x(0),y(0),z(0),s(0))=(x_0,y_0,z_0,s_0),
    \end{cases}
\end{equation}
with $\ell\in(L_-,L_+)$ for some $-\infty\leq L_- <0<L_+\leq\infty$. Again, all of $r_x,r_y,r_z,r_s$ depend on $x(\ell)$, $y(\ell)$, $z(\ell)$ and $s(\ell)$.

\section{Two-parameter metrics}\label{sec:2param}

Here we prove the following result.

\begin{theorem}\label{THM:2-param}
    Let $g_{t,s}$ be a metric on a sphere $\mathbb{S}^{2n+1}$ or a complex projective space $\mathbb C P^{2n+1}$, $n\geq 1$, as constructed in Section~\ref{SEC:homog_metrics} out of any of the Hopf fibrations. If $g_{t,s}$ has $\sec>0$, then its Ricci flow has $\sec>0$ for all positive times of existence. 
\end{theorem}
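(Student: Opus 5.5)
The plan is to apply Proposition~\ref{prop:RF_invariance} with $G$ the transitive group attached to the Hopf fibration in question. By Section~\ref{SEC:homog_metrics}, the $G$-invariant metrics are exactly the $g_{t,s}$ with $(t,s)\in\R^2_+$. Curvature signs are invariant under rescaling, so $\mathcal P$ depends only on the ratio $u:=t/s$.

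For the spheres I will use the Verdiani--Ziller description \cite{VZ09}: $g_{t,s}$ has $\sec>0$ if and only if $0<u<\tfrac43$. For $\C P^{2n+1}$ I will use the analogous description $0<u<c$ for an explicit constant $c$, to be supplied by Proposition~\ref{prop:cppos}. Since $u=0$ does not correspond to a metric in $\mathcal R$, in both cases
\[
\partial\mathcal P=\{g_{t,s}: t=c\,s\},\qquad c=\tfrac43 \text{ for spheres}.
\]
So $\mathcal P$ is cut out by the single function $c-u$. It therefore suffices to show that $u'(0)<0$ whenever the flow starts on $\partial\mathcal P$: then $u(\ell)>c$ for $\ell\in(-\varepsilon,0)$, i.e.\ $g(\ell)\notin\mathcal P$, and Proposition~\ref{prop:RF_invariance} gives the theorem.

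Next I write the ODE for $u$. The flow is $\frac{d}{d\ell}g=-2\Ric$, and the metric and its Ricci tensor are diagonal with eigenvalue functions $r_t$ on $V$ and $r_s$ on $H$. Hence $t'=-2r_t t$ and $s'=-2r_s s$; here I pair each Ricci eigenvalue with its own variable, reading the subscripts in \eqref{EQ:RF_2param} accordingly. This gives
\[
\frac{u'}{u}=\frac{t'}{t}-\frac{s'}{s}=-2\,(r_t-r_s).
\]
The quantity $s(r_t-r_s)$ is scale invariant, and for the spheres it equals
\[
\frac{d_V-1}{u}+(d_H+2d_V)\,u-(d_H-1+3d_V).
\]
At $u=\tfrac43$ this evaluates to $\tfrac{d_H}{3}+\tfrac{5d_V}{12}+\tfrac14>0$. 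Hence $u'<0$ on $\partial\mathcal P$ for all three Hopf fibrations, uniformly in $d_V\in\{1,3,7\}$ and $d_H$.

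For $\C P^{2n+1}$ the same computation with $r_t=\tfrac4t+\tfrac{4nt}{s^2}$ and $r_s=\tfrac{4n+8}{s}-\tfrac{4t}{s^2}$ gives
\[
s(r_t-r_s)=\frac4u+(4n+4)\,u-(4n+8).
\]
This expression has roots $u=1$ (the Fubini--Study metric, which is Einstein, as it must be) and $u=\tfrac1{n+1}$, and it is positive for $u>1$. So I need the threshold $c$ from Proposition~\ref{prop:cppos} to satisfy $c>1$. I expect $c=2$, consistent with the known fact that the fibre of $\mathbb{S}^2\to\C P^{2n+1}\to\H P^n$ may be enlarged up to a factor below $2$ while keeping $\sec>0$. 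Then $u'<0$ at $u=c$ as well.

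The main obstacle is not the derivative computation, which is a one-line sign check, but pinning down $\mathcal P$ for $\C P^{2n+1}$ exactly, namely that it is precisely the interval $0<u<c$ with $c>1$. That requires the curvature analysis of Proposition~\ref{prop:cppos}, which I take as given. A secondary point to note is that, because $\mathcal P$ is defined by the strict inequality $u<c$ and $u'\neq 0$ on the boundary, the strict exit condition in Proposition~\ref{prop:RF_invariance} holds with no degenerate cases.
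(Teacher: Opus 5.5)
Your argument is correct and is essentially the paper's proof. The paper also applies Proposition~\ref{prop:RF_invariance} with the single defining function $F=\tfrac43-\tfrac ts$ and computes $\frac{d}{d\ell}F=\tfrac83(r_t-r_s)$ at the normalized boundary point $(\tfrac43,1)$; this is your $u'<0$ computation, with your scale-invariant quantity $s(r_t-r_s)$ replacing their rescaling to $s=1$.

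One correction: Proposition~\ref{prop:cppos} gives $c=\tfrac43$ for $\C P^{2n+1}$, not $2$, because horizontal planes spanned by $X$ and $Xj$ have curvature $4-3t$. Your computation only needs $c>1$, and it gives $s(r_t-r_s)=\tfrac{4n+1}{3}>0$ at $u=\tfrac43$, so the conclusion is unaffected.
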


Before giving the proof, we need to recall the subset $\mathcal P$ of metrics $g_{t,s}$ with $\sec>0$ on both spheres and complex projective spaces. For spheres, it is known that $g_{t,1}$ has $\sec >0$ if and only if $t<\frac{4}{3}$ (see e.g. \cite[Theorem~A]{VZ09}, where their metric $g_t$ corresponds to our metric $g_{t,1}$). We now show the corresponding result for $\mathbb{C}P^{2n+1}$.  Recall the metric $g^{\mathbb{C}P}_{t,s}$ from Section \ref{SEC:homog_metrics}.

\begin{proposition}\label{prop:cppos}
The metric $g^{\mathbb{C}P}_{t,1}$ has $\sec > 0$ if and only if $t < \frac{4}{3}$.
\end{proposition}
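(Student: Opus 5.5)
The plan is to obtain $g^{\mathbb{C}P}_{t,1}$ as the base of a Riemannian submersion. Recall that $g^{\mathbb{C}P}_{t,1}$ is the quotient metric of $(\mathbb{S}^{4n+3},g_{t,1})$, built from $\mathbb{S}^3\to\mathbb{S}^{4n+3}\to\mathbb{H}P^n$, under the free isometric action of $\mathbb{S}^1\subseteq\mathbb{S}^3$. So $\pi\colon(\mathbb{S}^{4n+3},g_{t,1})\to(\mathbb{C}P^{2n+1},g^{\mathbb{C}P}_{t,1})$ is a Riemannian submersion whose fibers are the orbits of the $i$-direction in $\operatorname{Im}(\mathbb{H})$. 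Both implications then come from O'Neill's formula
$$
\sec_{\mathbb{C}P}(X,Y)=\sec_{\mathbb{S}}(\tilde X,\tilde Y)+\tfrac34\bigl|[\tilde X,\tilde Y]^{(i)}\bigr|^2 ,
$$
where $\tilde X,\tilde Y$ are horizontal lifts, meaning $g_{t,1}$-orthogonal to $i$.

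\emph{The direction $t<4/3$.} By \cite[Theorem~A]{VZ09}, for $t<\frac43$ the metric $g_{t,1}$ on $\mathbb{S}^{4n+3}$ has $\sec>0$. O'Neill's formula gives $\sec_{\mathbb{C}P}\ge\sec_{\mathbb{S}}>0$ on every plane. This direction is immediate.

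\emph{The direction $t\ge 4/3$.} Here we must exhibit a plane with $\sec\le 0$. We take a plane $\sigma$ in $\mathbb{S}^{4n+3}$ of non-positive $g_{t,1}$-curvature, as in \cite{VZ09}. Such a plane is spanned by vectors of the form $X+aV$ and $Y+bV$, with $X,Y\in H$ and $V\in\operatorname{Im}(\mathbb{H})$ a unit vertical vector, and it has curvature $0$ at $t=\frac43$ and negative curvature for $t>\frac43$.

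The goal is to arrange that $\sigma$ is horizontal for $\pi$ and that the A-tensor term vanishes, so that $\pi_*\sigma$ has the same non-positive curvature. We use the $\mathsf{Sp}_1$-action on $\mathbb{S}^{4n+3}$ that rotates $\operatorname{Im}(\mathbb{H})$ by conjugation while preserving $H$. It acts by isometries of $g_{t,1}$, since the metric is a constant multiple of the round one on all of $V$. With it we rotate $\sigma$ so that $V=j$, which is orthogonal to the $\mathbb{S}^1$-fiber direction $i$. Then both spanning vectors are horizontal for $\pi$.

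Next we compute the $i$-component of $[\tilde X,\tilde Y]$, equivalently of the O'Neill tensor $A_{\tilde X}\tilde Y$ of $\pi$. On horizontal vectors of the quaternionic Hopf fibration, $A_XY$ is proportional to $\operatorname{Im}\langle X,Y\rangle_{\mathbb{H}}$. In the optimal plane of \cite{VZ09} the horizontal pair is $X$ and $Y=XV$, i.e.\ $Y=Xj$ after the rotation. Then $\operatorname{Im}\langle X,Xj\rangle_{\mathbb{H}}$ is a multiple of $j$ and has no $i$-component. The mixed terms are handled similarly: the brackets $[X,j]$ and $[Xj,j]$ stay in $H$ up to terms in the $j$-direction. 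Hence $[\tilde X,\tilde Y]^{(i)}=0$, and
$$
\sec_{\mathbb{C}P}(\pi_*\sigma)=\sec_{\mathbb{S}}(\sigma)\le 0\quad\text{for } t\ge\tfrac43 .
$$

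The main obstacle is this last verification. One needs the explicit form of the bad plane from \cite{VZ09}, and then a check that after the rotation its spanning vectors have no bracket component along $i$, including the contributions of the mixed vertical--horizontal terms.

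As a fallback we would compute the curvature directly on $\mathsf{Sp}_{n+1}/\mathsf{Sp}_n\mathsf{U}_1$. We would write $g^{\mathbb{C}P}_{t,1}=Q|_{\lieP_1}+tQ|_{\lieP_0}$ and apply the standard curvature formula for such metrics. Evaluating on the plane spanned by $X+aj$ and $Xj+bj$ with $X\in\lieP_1$ yields a quadratic form in $(a,b)$ whose minimum is $\le0$ exactly when $t\ge\frac43$.
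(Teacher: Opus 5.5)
Your proof is correct. The direction $t<\frac43$ is exactly the paper's argument, but for $t\ge\frac43$ you take a different route.

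The paper simply quotes Ziller's explicit curvature computation on $\mathsf{Sp}_{n+1}/\mathsf{Sp}_n\mathsf{U}_1$. That computation exhibits planes spanned by $Y_\alpha, Y_{\alpha i}$ ($i=2,3$), horizontal for $\mathbb{S}^2\to\mathbb{C}P^{2n+1}\to\mathbb{H}P^n$, with curvature exactly $4-3t$. Your fallback is essentially this. Your main route instead pushes a non-positively curved plane of the sphere down through the circle quotient and shows the O'Neill correction vanishes.

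The ``main obstacle'' you flag disappears if you take $a=b=0$ and choose $V=j$ from the outset; no rotation is needed.
\begin{itemize}
\item For a unit $X\in H$ at $p$, the vertical part of $[X,Xj]$ for $\mathbb{S}^{4n+3}\to\mathbb{H}P^n$ is a multiple of $p\,\operatorname{Im}\langle X,Xj\rangle_{\mathbb{H}}=pj$, of round length $2$.
\item O'Neill for this fibration, with fibers scaled by $t$, then gives $\sec_{g_{t,1}}(X,Xj)=\sec_{\mathbb{H}P^n}(X,Xj)-3t=4-3t$.
\item Since $pj$ is $g_{t,1}$-orthogonal to the circle direction $pi$, the term $\bigl|[\tilde X,\tilde Y]^{(i)}\bigr|^2$ vanishes.
\item Hence $\pi_*X,\pi_*(Xj)$ span a plane of curvature $4-3t$ in $\mathbb{C}P^{2n+1}$. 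These are precisely Ziller's planes.
\end{itemize}
Your mixed-term claim is also true, in the stronger form that the brackets vanish. With $X$ basic, $[pj,X]=[pj,Xj]=0$ because horizontal lifts are invariant under the right $\mathsf{Sp}_1$-action. You do not need this, however.

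As for what each route buys: the paper's argument is shorter, given the citation. Yours is self-contained modulo the A-tensor of the quaternionic Hopf fibration. It also explains geometrically why only the $j,k$ directions produce bad planes. For $Y=Xi$ the O'Neill term equals $3t$ and restores the curvature to $4$.
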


\begin{proof}
As mentioned above, the metric $g_{t,1}$ on $\mathbb{S}^{4n+3}$ has $\sec >0$ for $t < 4/3$. It follows from the Gray-O'Neill formulas \cite{Gr,ON} applied to the Riemannian submersion $(\mathbb{S}^{4n+3},g_{t,1}) \to(\mathbb{C}P^{2n+1},g^{\mathbb{C}P}_{t,1})$ that  $g^{\mathbb{C}P}_{t,1}$ has $\sec >0$ when $t < 4/3$.

Conversely, we know from \cite[pg.~357]{Zi82} that there are $2$-planes whose sectional curvature for the metric $g^{\mathbb{C}P}_{t,1}$ equals $4-3t$. In the notation of \cite[pg.~355]{Zi82}, these $2$-planes are spanned by the orthonormal vectors $Y_\alpha$ and $Y_{\alpha i}$, where $1\leq \alpha\leq n$ and $2\leq i \leq 3$, both of which lie in the horizontal space $H$ of the decomposition from \eqref{EQ:2-par-Diag}. Clearly, it follows that $g^{\mathbb{C}P}_{t,1}$ does not have $\sec>0$ for $t\geq \frac{4}{3}$.
\end{proof}

Summarizing, for either spheres or complex projective spaces, $g_{t,1}$ has $\sec >0$ if and only if $t<\frac{4}{3}$. Observe now that if we run the Ricci flow $g_{t(\ell),s(\ell)}$ of a metric $g_{t_0,1}$, then $s(t)$ may no longer be equal to 1, see \eqref{EQ:RF_2param}. Thus we need to characterize all metrics $g_{t,s}$ with $\sec>0$. To do so, note that $g_{t,s}$ is equal to $s g_{\frac{t}{s},1}$, so is a rescaling of $g_{\frac{t}{s},1}$. Define $F(t,s)=\frac{4}{3}-\frac{t}{s}$. Then it is clear that the subset $\mathcal P$ of metrics $g_{t,s}$ with $\sec>0$ is
\begin{align}\label{Eq:2-par-pos-cone}
 \mathcal P=\{g_{t,s}\; :\; F(t,s)>0\}.   
\end{align}

\begin{proof}[Proof of Theorem~\ref{THM:2-param}]
    Following Proposition~\ref{prop:RF_invariance}, we consider an arbitrary metric $g_{t,s}$ in $\partial\mathcal P$. From \eqref{Eq:2-par-pos-cone}, we obtain that $g_{t,s}\in \partial \mathcal P$ if and only if $g_{t,s}=sg_{\frac{4}{3}, 1}$ for some $s>0$. In fact, for our purposes we may assume that $s=1$ because of the following reasons. First, the condition $\sec>0$ is invariant under scalings. Second, it is a general fact that the Ricci flow of a scaled metric $sg$ is $sg\left(\frac{\ell}{s}\right)$, where $g(\ell)$ is the Ricci flow of $g$, see \cite[Section~3.1.3]{AH}. Altogether we conclude that a metric $sg\in\partial\mathcal P$ with $s>0$ satisfies the hypothesis in Proposition~\ref{prop:RF_invariance} if and only if $g$ does.

  Thus we consider $g_{\frac{4}{3},1}\in\partial\mathcal P$ and its Ricci flow $g(\ell)$, i.e. $g(0)=g_{\frac{4}{3},1}$. We shall show that
    \begin{equation}\label{eq:positive1}
        \frac{d}{d\ell}\Big\vert_{\ell=0} F(g(\ell))>0.
    \end{equation}
    Since $F(g(0))=0$, the positivity of the derivative implies that there is some small $\varepsilon>0$ such that $F(g(\ell))<0$ for all $\ell\in (-\varepsilon,0)$. Consequently, $F(g(\ell))\notin\mathcal P$ for all $\ell\in (-\varepsilon,0)$. Proposition~\ref{prop:RF_invariance} will then complete the proof.

    It remains to show that \eqref{eq:positive1} holds. To compute the derivative we use the chain rule:
    $$
    \frac{d}{d\ell}\Big\vert_{\ell=0} F(g(\ell)) =\nabla F(g(0))  g'(0), 
    $$ 
    where we interpret $\nabla F(g(0))$ as a $1\times 2$ vector and $g'(0)$ as a $2\times 1$ vector. Using the Ricci flow equation given in \eqref{EQ:RF_2param} we find that $g(\ell)$  is the family $g_{t(\ell),s(\ell)}$ determined by $(t',s')=(-2r_t t,-2r_s s)$. For $\ell=0$ we have $(t,s)=(\frac{4}{3},1)$, so $g'(0)=(-\frac{8}{3}r_t,-2r_s)^T$,  where $T$ denotes the transpose vector.

    The gradient of $F$ is $\nabla F=\left( -\frac{1}{s}, \frac{t}{s^2} \right)$, so $\nabla F(g(0))=\left( -1, \frac{4}{3} \right)$. Thus
$$
\frac{d}{d\ell}\Big\vert_{\ell=0} F(g(\ell))=\left( -1, \frac{4}{3} \right)\left(-\frac{8}{3}r_t,-2r_s\right)^T=\frac{8}{3}(r_t-r_s).
$$
For the case of a sphere, the Ricci eigenvalues of $g(0)=g_{\frac{4}{3},1}$ reduce to:
$$
r_t=\frac{3}{4}(d_V-1)+ \frac{4}{3} d_H,\qquad r_s=d_H -1 + \frac{1}{3} d_V.
$$
Then 
$$
\frac{d}{d\ell}\Big\vert_{\ell=0} F(g(\ell))=\frac{8}{3}(r_t-r_s)=\frac{2}{9} ( 5 d_V + 4 d_H + 3)>0.
$$
For complex projective spaces, the Ricci eigenvalues of $g(0)=g_{\frac{4}{3},1}$ reduce to 
$$
r_t = \frac{16}{3}n + 3,\qquad r_s=4n+\frac{8}{3}.
$$ 
Then 
$$
\frac{d}{d\ell}\Big\vert_{\ell=0} F(g(\ell)) = \frac{8}{3}(r_t-r_s) = \frac{8}{9}\left(4n+1\right) > 0.
$$
\end{proof}

\begin{remark}
  One can alternatively prove Theorem~\ref{THM:2-param} using the normalized Ricci flow. This reduces the case of $g_{t,s}$ to a one-parameter family of metrics. Since the normalized Ricci flow is the gradient flow of the scalar curvature functional, one would have to consider the function $\scal(g(t))$ and determine the sign of its derivative at the boundary metric.
\end{remark}

\section{Four-parameter metrics and a change of variables}\label{SEC:4param}

The goal of the rest of the article, except for the last section, is to prove the following result.

\begin{theorem}\label{THM:4-param}
    Let $g_{x,y,z,s}$ be a metric on $\mathbb{S}^{4n+3}$, $n\geq 1$, as constructed in Section~\ref{SEC:homog_metrics}. If $g_{x,y,z,s}$ has $\sec>0$, then its Ricci flow has $\sec>0$ for all positive times of existence. 
\end{theorem}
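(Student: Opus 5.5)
The strategy is the same as in the proof of Theorem~\ref{THM:2-param}: verify the hypothesis of Proposition~\ref{prop:RF_invariance} for the four-parameter family.

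\textbf{Step 1: describe $\mathcal P$.} From Verdiani--Ziller \cite{VZ09}, the metrics $g_{x,y,z,1}$ with $\sec>0$ are cut out by finitely many polynomial inequalities in $(x,y,z)$. The introduction indicates there are nine, and they are permuted by the symmetric group acting on $\{x,y,z\}$. Using the scaling $g_{x,y,z,s}=s\,g_{x/s,y/s,z/s,1}$, I would homogenize these inequalities to obtain
\[
\mathcal P=\{g_{x,y,z,s} : P_1>0,\dots,P_9>0\}.
\]
The Ricci flow ODE \eqref{EQ:RF_4param} is equivariant under permutations of $x,y,z$, since the eigenvalues \eqref{EQ:Ricci_eigen_4param} are permuted accordingly. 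So after ordering, say $x\le y\le z$, I only need to study the functions that can vanish on that chamber. I expect only three essentially different functions to remain, call them $H$, $V$ and $P_1$. I would also change variables to ratios such as $a=x/s$, $b=y/s$, $c=z/s$, possibly symmetrized, so that these functions become simpler polynomials.

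\textbf{Step 2: reduce to a derivative sign.} Take $g\in\partial\mathcal P$ and use scaling, as before, to set $s=1$. Then at least one defining function, say $F\in\{H,V,P_1\}$, vanishes at $g$ while the others are $\ge 0$. It suffices to show
\[
\frac{d}{d\ell}\Big|_{\ell=0}F(g(\ell))=\nabla F\cdot\bigl(-2r_xx,\,-2r_yy,\,-2r_zz,\,-2r_s s\bigr)^T>0
\]
on the set $\{F=0\}\cap\overline{\mathcal P}$. This gives $F(g(\ell))<0$ for small negative $\ell$, and Proposition~\ref{prop:RF_invariance} then applies.

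Points where the derivative vanishes, or where several functions vanish at once, need a separate argument. There I would use higher-order terms, or show that at least one of the vanishing functions has a strictly positive derivative. Either suffices, since leaving any one inequality means leaving $\mathcal P$.

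\textbf{Step 3: the sign analysis.} For each $F$, the derivative $D_F$ is a rational function whose denominator has a known sign. Its numerator $N_F$ is a polynomial in the parameters and $n$. I would prove $N_F>0$ on the relevant boundary piece as follows:
\begin{enumerate}
\item Parametrize or eliminate along $\{F=0\}$, so that the boundary piece becomes a region in two variables.
\item Locate the possible zeros of $N_F$ there by computing resultants $\res_a(F,N_F)$ and $\res_b(F,N_F)$.
\item Count their real roots in the relevant intervals with Sturm's Theorem~\ref{T:Sturm_root}, as in Remark~\ref{REM:Sturm}.
\item Combine this with sign checks at a sample point and on the edges of the region.
\end{enumerate}
The dependence on $n\ge1$ would be handled by treating $n$ as a real parameter $\ge 1$, hoping $N_F$ is monotone in $n$, or by collecting terms with positive coefficients in $n$.

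I expect $H$ and $V$ to be manageable, for instance when they are linear or quadratic boundary conditions in the spirit of $t<4/3$. The main obstacle will be the most complicated Verdiani--Ziller condition $P_1$, which is a high-degree polynomial. There I would proceed in two stages:
\begin{enumerate}
\item Confine the zero set $\{P_1=0\}\cap\overline{\mathcal P}$ to a small explicit box, using resultants with the other constraints and Sturm counts.
\item Prove positivity of $D_{P_1}$ only on that box, splitting it into subregions and bounding terms crudely where exact elimination becomes infeasible.
\end{enumerate}
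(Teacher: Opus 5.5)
Your plan follows essentially the same route as the paper: Proposition~\ref{prop:RF_invariance} applied to the Verdiani--Ziller description of $\mathcal P$, and reduction via permutation symmetry and scaling to $s=1$ with $H_1$, $V_1$ or $P_1$ vanishing. From there the paper computes the chain-rule derivative along the flow, first confines $\mathcal B_P\setminus(\mathcal B_H\cup\mathcal B_V)$ to a small explicit set ($x=x_4(a,b)$ with $(a,b)\in S$, where $a=y+z$, $b=yz$), and then uses resultants and Sturm sequences, handling $n$ by linearity ($Fn+G$ with $F>0$ and $G\ge\tfrac12F$). One correction: $P_1=\sqrt{H_1V_1}+yz-3|yz-y-z+x|$ is neither polynomial nor everywhere smooth, so the paper shows that the kink $yz-y-z+x=0$ never meets $\{P_1=0\}$, falls back on $H_1$ or $V_1$ where $H_1V_1=0$, and removes square roots (using $\sqrt{H_1V_1}=3(y+z)-4yz-3x$ on the zero set, then squaring and accepting extraneous solutions) before any elimination.
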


The proof of this theorem is rather involved. In this section we settle the strategy to follow, including references to results in later sections which will complete the proof. Also, at the end of this section we present a certain change of variables from $(y,z)$ to $(a,b)$ that will simplify the computations to come.

First we need to recall the space $\mathcal P$ of all metrics $g_{x,y,z,s}$ of $\sec>0$. This was determined by Verdiani and Ziller in \cite{VZ09} (note that they write $t_1,t_2,t_3$ instead of $x,y,z$). They focused on metrics of the form $g_{x,y,z,1}$, but this subset of $\mathcal P$ already determines $\mathcal P$ because $\sec>0$ is preserved under scalings. In order to state their result, we need some auxiliary functions. As in \cite{VZ09} we define the following functions of $\R_+^3$ in coordinates $(x,y,z)$:
\begin{align*}
H_1&=4-3x, & V_1 
= \frac{y^2+z^2-3x^2+2xy+2xz-2yz}{x},\\
H_2&=4-3y, &V_2 = \frac{x^2+z^2-3y^2+2yx+2yz-2xz}{y},\\
H_3&=4-3z, &V_3 = \frac{x^2+y^2-3z^2+2zx+2zy-2xy}{z}.\\
\end{align*}
We also define 
\begin{align*}
P_1 &= \sqrt{H_1 V_1} +yz - 3\vert yz-y-z+x\vert, \\
P_2 &= \sqrt{H_2 V_2} +xz - 3\vert xz-x-z+y\vert, \\
P_3 &= \sqrt{H_3 V_3} +xy - 3\vert xy-x-y+z\vert.
\end{align*}
With this notation, Verdiani and Ziller prove that a metric $g_{x,y,z,1}$ has $\sec >0$ if and only if the nine functions $H_i, V_i, P_i$, with $i=1,2,3$, are all positive. Note that the three inequalities $H_i>0$ simply mean that $x,y,z\in \left(0,\frac{4}{3}\right)$.

As we argued in Section~\ref{sec:2param}, we need to describe the set $\mathcal P$ of $\operatorname{sec}>0$ metrics for the whole family of metrics $g_{x,y,z,s}$, i.e., when $s$ is not necessarily equal to 1. To do this, note that we have a homothety:
$$
g_{x,y,z,s}\cong g_{\frac{x}{s},\frac{y}{s},\frac{z}{s},1}.
$$
Since $\sec > 0$ is preserved by scalings we naturally extend the nine functions $H_i,V_i,P_i:\R_+^3\to\R$ to functions $\tilde H_i,\tilde V_i,\tilde P_i:\R_+^4\to\R$ in the variables $(x,y,z,s)$ as follows:
$$
\tilde H_i(x,y,z,s):=H_i\left( \frac{x}{s},\frac{y}{s},\frac{z}{s}\right),\qquad \tilde V_i(x,y,z,s):=V_i\left( \frac{x}{s},\frac{y}{s},\frac{z}{s}\right),
$$
$$
\tilde P_i(x,y,z,s):=P_i\left( \frac{x}{s},\frac{y}{s},\frac{z}{s}\right),
$$ 
for $i=1,2,3$. Altogether we conclude that the subset $\mathcal P$ of metrics $g_{x,y,z,s}$ with $\sec>0$ is
$$
\mathcal P=\{g_{(x,y,z,s)}\; :\; \tilde H_i, \tilde{V}_i, \tilde{P}_i > 0 \text{ for all } i = 1,2,3\}.
$$
We are ready to present the strategy of the proof.

\begin{proof}[Strategy of the proof for Theorem~\ref{THM:4-param}]

We will prove Theorem \ref{THM:4-param} by appealing to Proposition \ref{prop:RF_invariance}. First we need to analyze the boundary $\partial \mathcal{P}$. As in the proof of Theorem~\ref{THM:2-param}, note that preservation of $\sec>0$ under the Ricci flow is not affected by scalings of the initial metric, so it is enough to study the behavior of metrics of the form $g_{x,y,z,1}$. Note that a metric $g_{x,y,z,1}$ lies in $\partial\mathcal{P}$ if and only if the nine functions $H_i, V_i, P_i$ for $i=1,2,3$ are non-negative and at least one of them is zero. 

Our main approach to showing that the hypotheses of Proposition \ref{prop:RF_invariance} are satisfied is similar in spirit to the proof of Theorem~\ref{THM:2-param}, although it is inevitably more involved since in this case the set $\mathcal P$ is determined by the positivity of nine functions instead of just one. Let us be more precise. As a first step we need to consider each metric of the form $g_{x,y,z,1}\in\partial\mathcal P$. Then at least one of the nine functions $H_i,V_i,P_i$ vanishes at $(x,y,z)$. Let $g(\ell)$ be the   Ricci flow of $g_{x,y,z,1}$. It will then be sufficient to show that, among the vanishing functions, we can find one, say $H_1$, which is smooth at $(x,y,z)$ and for which the corresponding extended function $\tilde{H}_1(g(\ell))$ has positive derivative at $\ell=0$. Because $\tilde H_1(g(0))=0$, the positivity of the derivative implies that there is some small $\varepsilon>0$ such that $\tilde H_1(g(\ell))<0$ for all $\ell\in (-\varepsilon,0)$. Hence, $\tilde H_1(g(\ell))\notin\mathcal P$ for all $\ell\in (-\varepsilon,0)$. Proposition~\ref{prop:RF_invariance} completes the proof.

Let us remark that, while the functions $H_i,V_i$ are smooth on the whole $\R_+^3$, it will turn out, however, that there are some metrics on the boundary $\partial\mathcal P$ at which some $P_i$ is non-differentiable. Fortunately, for these metrics it turns out that either $H_i$ or $V_i$ vanishes as well and the derivative of the corresponding extended function has the right sign, which is enough for our purposes.

In principle, one would have to study nine cases, in each of which one assumes that one of the nine functions vanishes while the others are non-negative. However, the functions $H_i$, $V_i$, and $P_i$ are symmetric in the three variables. This implies that, for instance, if a metric $g\in\partial\mathcal P$ satisfies $H_2=0$ or $H_3=0$, then there exists a metric $\overline{g}\in\partial\mathcal P$ with $H_1=0$ having the same Ricci flow behavior as $g$ with respect to the hypotheses of Proposition~\ref{prop:RF_invariance}; and analogous properties hold for $V_i$ and $P_i$. Therefore, it suffices to consider only three cases, corresponding to the vanishing of $H_1$, $V_1$, and $P_1$. For convenience, we define the sets
\begin{align*}
\mathcal B_H &:=\{g_{x,y,z,1}\in\partial \mathcal P\; :\; H_1(x,y,z)=0\}, \\
\mathcal B_V &:=\{g_{x,y,z,1}\in\partial \mathcal P\; :\; V_1(x,y,z)=0\}, \\
\mathcal B_P &:=\{g_{x,y,z,1}\in\partial \mathcal P\; :\; P_1(x,y,z)=0\}.
\end{align*}
Note that $\mathcal B_H$, $\mathcal B_V$ and $\mathcal B_P$ are not disjoint. By the discussion above it follows that Theorem~\ref{THM:4-param} will be proven if we show that for each metric $g_{x,y,z,1}\in\mathcal B_H\cup \mathcal B_V\cup\mathcal B_P$ it holds that the derivative of the extended function $\tilde H_1(g(\ell))$, $\tilde V_1(g(\ell))$ or $\tilde P_1(g(\ell))$ is positive for at least one of the functions $H_1$, $V_1$ or $P_1$ which vanishes at $(x,y,z)$. This plan will be carried out in Propositions~\ref{prop:H}, \ref{prop:V} and \ref{prop:P}, where we will consider, respectively, metrics in $\mathcal B_H$, $\mathcal B_V$ and $\mathcal B_P\setminus (\mathcal B_H \cup \mathcal B_V)$.
\end{proof}

To follow this strategy, the first task will be to describe the set $\mathcal B_H$, $\mathcal B_V$ or $\mathcal B_P$. It turns out that the computations get considerably tedious, so we settle for determining sets which are larger than $\mathcal B_H$, $\mathcal B_V$ or $\mathcal B_P$ but also simpler to describe. Knowing these larger sets will be enough for our purposes. We will describe these sets in the form $g_{x(y,z),y,z,1}$, i.e. with $(y,z)$ in $\R_+^2$ and with $x$ depending on $(y,z)$. In order to do so, we take advantage of two basic observations. The first one is stated in the following remark for later reference.

\begin{remark}\label{REM:square_0_43}
    Since $H_1$, $H_2$ and $H_3$ must be non-negative for all metrics $g_{x,y,z,1}$ in the closure of $\mathcal P$, it follows that $(x,y,z)\in (0,\frac{4}{3}]^3$. In particular, the same holds for any metric in $\mathcal B_H$, $\mathcal B_V$ or $\mathcal B_P$.
\end{remark}
\medskip

The second observation is that the formulas for $H_1$, $V_1$ and $P_1$ are symmetric in $y$ and $z$, so it will be convenient to express them in terms of elementary symmetric polynomials. To that end, we let 
\begin{equation}\label{EQ:change_variables}
 a:=\sigma_1(y,z) = y+z,\qquad b:=\sigma_2(y,z) = yz.   
\end{equation}
Since $(y,z)\in (0,\frac{4}{3}]^2$ by Remark~\ref{REM:square_0_43}, in order to work with the variables $a,b$ we need to determine the corresponding region $T$ in the $(a,b)$-plane.

\begin{proposition}\label{prop:feas1} 
The image  of $(0,\frac{4}{3}]^2$ under the function $\psi:\R_+^2\rightarrow \mathbb{R}^2$ defined by $(y,z)\mapsto (y+z,yz)$  is the region $T$ bounded by the curves $b = \frac{a^2}{4}$, $b = \frac{4}{3} (a-\frac{4}{3})$, and $b=0$, including the curves except that it excludes any point on $b=0$  (see Figure \ref{fig:feasible1}).
\end{proposition}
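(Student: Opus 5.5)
The plan is the standard description of the image of a box under the elementary symmetric polynomials: a point $(a,b)$ lies in the image of $\psi$ exactly when the quadratic $p(\lambda)=\lambda^2-a\lambda+b$ has both roots in $(0,\frac{4}{3}]$. Indeed, $(y,z)\in(0,\frac43]^2$ maps to $(a,b)$ if and only if $y,z$ are the roots of $p$, by Vieta's formulas. So I would prove the two inclusions by translating "both roots real and in $(0,\frac43]$" into inequalities on $(a,b)$.

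\emph{Forward inclusion.} Take $(y,z)\in(0,\frac43]^2$. Then:
\begin{itemize}
\item $b=yz>0$, so the image avoids $b=0$;
\item $a^2-4b=(y-z)^2\ge 0$, i.e.\ $b\le a^2/4$;
\item $(\frac43-y)(\frac43-z)\ge0$, which expands to $\frac{16}{9}-\frac43 a+b\ge 0$, i.e.\ $b\ge \frac43(a-\frac43)$.
\end{itemize}
Hence $\psi(y,z)$ lies in the closed region bounded by the three curves, with the segment on $b=0$ removed. One should also note $0<a\le\frac83$. This is the vertex range of the region: the parabola $b=a^2/4$ is tangent to the line $b=\frac43(a-\frac43)$ at $a=\frac83$ (the double root $\frac43$), and it meets $b=0$ at $a=0$.

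\emph{Reverse inclusion.} Take $(a,b)$ with $b>0$, $b\le a^2/4$, $b\ge\frac43(a-\frac43)$, and $a$ in the range cut out by these curves. Since the discriminant is nonnegative, $p$ has real roots $y\le z$. From $b>0$ the roots have the same sign. To see they are positive, observe that the region forces $a>0$: indeed $b\le a^2/4$ together with $b\ge\frac43(a-\frac43)$ restricts $a$ to $[0,\frac83]$ on the relevant branch, and $a=0$ is excluded by $b>0$. Next, $p(\frac43)=\frac{16}{9}-\frac43 a+b\ge0$ and the vertex $a/2\le\frac43$ together show $\frac43$ is not strictly between the roots and not below them, so $z\le\frac43$. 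Thus $(y,z)\in(0,\frac43]^2$ and $\psi(y,z)=(a,b)$.

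\emph{Main obstacle.} The only delicate point is pinning down precisely which region "bounded by the three curves" means. The line and the parabola $b\ge\frac43(a-\frac43)$, $b\le a^2/4$ also carve out an unbounded piece for negative $a$ or large $a$, so one must use $a\in(0,\frac83]$. I would handle this by computing the intersection points $(0,0)$, $(\frac43,0)$, and $(\frac83,\frac{16}{9})$ (the tangency point) and describing $T$ explicitly as
$$
T=\{(a,b): 0<a\le\tfrac83,\ \max(0,\tfrac43(a-\tfrac43))\le b\le \tfrac{a^2}{4},\ b>0\}.
$$
Both inclusions above are then checked against this description, using the vertex condition $a/2\le \frac43$, which is equivalent to $a\le\frac83$.
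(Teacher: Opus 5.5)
Your proof is correct, but it takes a different route from the paper. The paper slices $T$ by vertical lines. It fixes $a\in(0,\frac83]$, notes that $y$ ranges over $(0,a)$ when $a\le\frac43$ and over $[a-\frac43,\frac43]$ when $a>\frac43$, and then reads off the range of the parabola $b=y(a-y)$ on that interval. That range is $(0,\frac{a^2}{4}]$ or $[\frac43(a-\frac43),\frac{a^2}{4}]$, respectively.

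You instead use Vieta's formulas to turn membership in the image into a root-location condition for $\lambda^2-a\lambda+b$. The identities $a^2-4b=(y-z)^2$ and $(\frac43-y)(\frac43-z)=\frac{16}{9}-\frac43a+b$ give the forward inclusion in one line. They also explain where the boundary comes from: the line $b=\frac43(a-\frac43)$ is exactly where one of $y,z$ equals $\frac43$, and it is tangent to the parabola at $(\frac83,\frac{16}{9})$, the image of the double root $\frac43$. The reverse inclusion via $p(\frac43)\ge 0$ plus the vertex condition $\frac a2\le\frac43$ is also sound. In short, your approach is more conceptual and symmetric, while the paper's slicing builds the constraint $0<a\le\frac83$ into the argument automatically.

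One sentence in your reverse inclusion is wrong as stated. You claim that $b\le\frac{a^2}{4}$ together with $b\ge\frac43(a-\frac43)$ ``restricts $a$ to $[0,\frac83]$.'' It does not: since $\frac{a^2}{4}-\frac43a+\frac{16}{9}=(\frac a2-\frac43)^2\ge0$, these two inequalities are compatible for every $a$. The bound $0<a\le\frac83$ has to come from the definition of $T$ as the bounded region. You do adopt that explicit description in your final paragraph, so the argument stands once that sentence is replaced by an appeal to the definition.
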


\begin{proof}We first observe that since $y,z\in \left(0,\frac{4}{3}\right]$, it easily follows that $a = y+z \in \left(0,\frac{8}{3}\right]$.  For each value of $a \in \left(0,\frac{8}{3}\right]$, we will determine the set of $b$-values which are obtainable with this choice of $a$.

Assume initially that $a\in  \left(0,\frac{4}{3}\right]$.  We observe that for any $y<a$, there is a $z\in \left(0,\frac{4}{3}\right]$ for which $y+z = a$, but that this statement is false for any $y\geq a$. Thus, for fixed $a\in \left(0,\frac{4}{3}\right]$, we conclude that $y\in (0,a)$.

Since $b= yz$ and $z= a-y$, we find $b = y(a-y)$ with $y\in (0,a)$. For fixed $a$, the formula $b= y(a-y)$ defines a parabola. The value of $b$ at $y=0$ and $y=a$ is $0$, while the value at the vertex $\frac{a}{2}\in (0,a)$ is $\frac{a^2}{4} > 0$.   It follows that, as $y$ varies over $(0,a)$, the value of $b$ varies in $\left(0,\frac{a^2}{4}\right]$.

We next assume that $a\in \left(\frac{4}{3},\frac{8}{3}\right]$.  Here, for a given $y\in \left(0,\frac{4}{3}\right]$, there is a $z\in \left(0,\frac{4}{3}\right]$ with $y+z = a$ if and only if $y\in \left[a-\frac{4}{3}, \frac{4}{3}\right]$.  Then we have $b = y(a-y)$ with $y\in\left[a-\frac{4}{3},\frac{4}{3}\right]$.  The value of $b$ at both endpoints $y=a-\frac{4}{3}$ and $y = \frac{4}{3}$ is $\frac{4}{3}\left(a-\frac{4}{3}\right)$, and the value at the vertex $\frac{a}{2}$  is $\frac{a^2}{4}$.  Since the coefficient of $y^2$ is negative, we deduce that $\frac{4}{3} \left(a-\frac{4}{3}\right) \leq \frac{a^2}{4}$ for $a\in\left(\frac{4}{3},\frac{8}{3}\right]$. It follows that, for $a\in\left(\frac{4}{3},\frac{8}{3}\right]$, the region $T$ ranges between the graphs of $b = \frac{4}{3}\left(a-\frac{4}{3}\right)$ and $b = \frac{a^2}{4}$.
\end{proof}

\begin{figure}
\begin{center}
\begin{tikzpicture}
\begin{axis}[
    axis lines=middle, xlabel={$a$}, ylabel={$b$},
    xmin=-0.2, xmax=3.2, ymin=-0.2, ymax=2.2,
    xtick={0,1,2,3}, ytick={0,1,2},
    axis line style={->}, axis on top
]
    \addplot[name path=para, thick, domain=0:2.666, samples=100] {x^2/4};
    \path[name path=bot] (axis cs:0,0) -- (axis cs:1.333,0) -- (axis cs:2.666, 1.777);
    \addplot[red!20] fill between[of=para and bot];
    \draw[thick] (axis cs:0,0) -- (axis cs:1.333,0) -- (axis cs:2.666, 1.777);
\end{axis}
\end{tikzpicture}
\caption{The region T}\label{fig:feasible1}
\end{center}
\end{figure}

Lastly, we express $V_1$, $P_1$, $r_x$ and $r_s$, all of which symmetric in $y$ and $z$, in terms of $(x,a,b,s)$. Note that, in contrast, $r_y$ and $r_z$ are not symmetric in $y$ and $z$.

\begin{lemma}\label{lem:convertab} 
In terms of $a$ and $b$, we have 
$$
V_1 = \frac{a^2-4b+2xa - 3x^2}{x}\qquad \text{ and }\qquad P_1 = \sqrt{H_1 V_1} + b -3|b-a+x|.
$$  
In addition, the Ricci eigenvalues $r_x$ and $r_s$ are given by 
$$
r_x = \frac{4}{x} + \frac{4nx}{s^2} + 2 \frac{x^2-a^2 + 2b}{bx}\qquad \text{ and }\qquad r_s =\frac{4(n+2)}{s} - 2 \frac{x+a}{s^2}.
$$

\end{lemma}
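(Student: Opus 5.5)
This is a direct substitution, using only the identities $y+z=a$, $yz=b$, and $y^2+z^2=a^2-2b$. I will treat the four formulas one at a time, each time grouping the original expression so that only these symmetric combinations of $y$ and $z$ appear.

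For $V_1$, group the numerator as $(y^2+z^2) - 2yz + 2x(y+z) - 3x^2$. Substituting gives $(a^2-2b) - 2b + 2xa - 3x^2 = a^2 - 4b + 2xa - 3x^2$. The denominator $x$ is unchanged, which yields the stated formula.

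For $P_1$, the term $yz$ is $b$, and the expression inside the absolute value is $yz - (y+z) + x = b - a + x$. The factor $\sqrt{H_1V_1}$ stays formally the same, since $H_1 = 4-3x$ does not involve $y,z$ and $V_1$ was handled in the previous step.

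For $r_x$, take \eqref{EQ:Ricci_eigen_4param}. The first two terms depend only on $x$ and $s$. In the last term, the numerator is $x^2 - (y^2+z^2) = x^2 - a^2 + 2b$, and the denominator is $xyz = bx$.

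For $r_s$, the term $y+z$ is replaced by $a$, so $x+y+z = x+a$.

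The only place an error could creep in is the sign bookkeeping in $V_1$: the mixed term $-2yz$ together with the $-2b$ coming from $y^2+z^2$ must combine into $-4b$.
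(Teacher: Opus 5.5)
Your proposal is correct. It is the same direct substitution, using $y+z=a$, $yz=b$ and $y^2+z^2=a^2-2b$, that the paper leaves implicit, since the lemma is stated there without proof. All four formulas, including the combination of $-2yz$ with the $-2b$ from $y^2+z^2$ into $-4b$ in $V_1$, check out.
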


\section{\texorpdfstring{First case: $H_1=0$}{First case: H1=0}}\label{sec:H}

Recall the set $\mathcal B_H:=\{g_{x,y,z,1}\in\partial \mathcal P\; :\; H_1(x,y,z)=0\}$ defined in Section~\ref{SEC:4param}. In this section we prove the following proposition.

\begin{proposition}\label{prop:H}
Suppose $g_{x,y,z,1}\in\mathcal B_H$, and let $g(\ell)$ denote the Ricci flow of $g(0)=g_{x,y,z,1}$, with $\ell\in (L_-,L_+)$ and $-\infty\leq -L<0<L_+\leq\infty$. Then there is some $\varepsilon>0$ such that $g(\ell)\notin\mathcal P$ for all $\ell\in (-\varepsilon,0)$.
\end{proposition}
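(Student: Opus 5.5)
The plan is to use the same mechanism as in the proof of Theorem~\ref{THM:2-param}, applied to $\tilde H_1$, which is smooth everywhere. If $g_{x,y,z,1}\in\mathcal B_H$ then $x=\frac43$. By Remark~\ref{REM:square_0_43}, $(y,z)\in(0,\frac43]^2$, so by Proposition~\ref{prop:feas1} the point $(a,b)=(y+z,yz)$ lies in the region $T$. I will show that
\[
\frac{d}{d\ell}\Big|_{\ell=0}\tilde H_1(g(\ell))>0 .
\]
Since $\tilde H_1(g(0))=0$, this gives $\tilde H_1(g(\ell))<0$, hence $g(\ell)\notin\mathcal P$, for all $\ell\in(-\varepsilon,0)$ and some $\varepsilon>0$. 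Only membership of $(a,b)$ in $T$ should be needed; the constraints coming from $V_i$ and $P_i$ should play no role.

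\textbf{Reducing to a Ricci comparison.} We have $\tilde H_1=4-3x/s$. By the chain rule and \eqref{EQ:RF_4param}, with $x'=-2r_xx$ and $s'=-2r_ss$, evaluating at $s=1$ gives
\[
\frac{d}{d\ell}\tilde H_1=-3(x'-xs')=6x(r_x-r_s)=8(r_x-r_s).
\]
So it suffices to show $r_x>r_s$ at $(x,s)=(\frac43,1)$. Substituting into Lemma~\ref{lem:convertab}, I expect
\[
r_x=6+\tfrac{16n}{3}+\frac{\frac83-\frac32a^2}{b},\qquad r_s=4n+\tfrac{16}{3}-2a,
\]
and therefore
\[
r_x-r_s=\tfrac{4n}{3}+\tfrac23+2a-\frac{3}{2}\,\frac{(a-\frac43)(a+\frac43)}{b}.
\]

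\textbf{Case analysis over $T$.}
\begin{itemize}
\item If $a\le\frac43$, the last term is non-negative, so $r_x-r_s\ge\frac{4n}{3}+\frac23+2a>0$.
\item If $a\in(\frac43,\frac83]$, the numerator of the last term is positive, so that term is largest when $b$ is smallest. Proposition~\ref{prop:feas1} gives $b\ge\frac43(a-\frac43)$. The factor $a-\frac43$ then cancels, and the last term is at most $\frac98(a+\frac43)$. This yields
\[
r_x-r_s\ge \tfrac{4n}{3}-\tfrac56+\tfrac78a>0,
\]
since $n\ge1$.
\end{itemize}

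\textbf{Main obstacle.} The delicate point is the regime where $a$ is slightly above $\frac43$ and $b$ is close to $0$. There the quotient $\frac{(a-\frac43)(a+\frac43)}{b}$ could a priori blow up. The estimate survives only because the numerator vanishes exactly at $a=\frac43$, while the lower boundary line $b=\frac43(a-\frac43)$ of $T$ vanishes to the same order there. I would double-check this cancellation carefully, together with the arithmetic of substituting $x=\frac43$ into $r_x$.
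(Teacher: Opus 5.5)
Your proposal is correct and is essentially the paper's argument. Like the paper, you reduce to $8(r_x-r_s)>0$ at $x=\frac43$ and then split into $a\le\frac43$ versus $a>\frac43$, using the lower boundary $b\ge\frac43(a-\frac43)$ of $T$. The paper instead reduces to $n=1$ and shows that the zero set $b=\frac{9a^2-16}{12(a+1)}$ of $F^H$ misses $T$, adding a sample point, whereas you bound $r_x-r_s$ from below directly; your arithmetic, giving $\frac{4n}{3}-\frac56+\frac78a>0$, checks out.
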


We observe that, since $(y,z)\in \left(0,\frac{4}{3}\right]^2$ for all $g_{x,y,z,1}\in \mathcal B_H$ by Remark~\ref{REM:square_0_43}, and since $H_1(x,y,z)=0$ if and only if $x=\frac{4}{3}$, we have the obvious inclusion
$$
\mathcal B_H\subset\left\{g_{\frac{4}{3},y,z,1}\; :\; (y,z)\in \left(0,\tfrac{4}{3}\right]^2 \right\}.
$$
Recall that $\tilde H_1(x,y,z,s)=4-3\frac{x}{s}$, so it is smooth on the whole $\R_+^4$, and in particular at any point of $\left\{g_{\frac{4}{3},y,z,1}\; :\; (y,z)\in \left(0,\frac{4}{3}\right]^2 \right\}$. Then Proposition \ref{prop:H} follows easily from the following proposition.

\begin{proposition}\label{prop:Hprime}
    For all $(y,z)\in \left(0,\frac{4}{3}\right]^2$ it holds that $\frac{d}{d\ell}\Big\vert_{\ell=0} \tilde H_1(g(\ell)) > 0$, where $g(\ell)$ denotes the Ricci flow of $g(0)=g_{\frac{4}{3},y,z,1}$.
\end{proposition}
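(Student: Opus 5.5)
The plan is a direct chain-rule computation of the same kind as in the proof of Theorem~\ref{THM:2-param}. At $(x,y,z,s)=(\frac{4}{3},y,z,1)$ it reduces the derivative of $\tilde H_1$ to the sign of an explicit polynomial in the variables $(a,b)$ of \eqref{EQ:change_variables}, whose positivity is then checked on the region $T$ from Proposition~\ref{prop:feas1}.

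\textbf{Step 1: reduce to a Ricci eigenvalue comparison.} Since $\tilde H_1=4-3\frac{x}{s}$, we have
$$\frac{d}{d\ell}\tilde H_1=-3\,\frac{x's-xs'}{s^2}.$$
By \eqref{EQ:RF_4param}, $x'=-2r_xx$ and $s'=-2r_ss$. Evaluating at $\ell=0$, where $x=\frac{4}{3}$ and $s=1$, gives
$$\frac{d}{d\ell}\Big\vert_{\ell=0}\tilde H_1(g(\ell))=-3\left(-\tfrac{8}{3}r_x+\tfrac{8}{3}r_s\right)=8(r_x-r_s).$$
So it suffices to show $r_x>r_s$ at $(\frac{4}{3},y,z,1)$ for every $(y,z)\in(0,\frac43]^2$.

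\textbf{Step 2: express $r_x-r_s$ in $(a,b)$.} Use the formulas of Lemma~\ref{lem:convertab} with $x=\frac43$ and $s=1$:
$$r_x=6+\tfrac{16n}{3}+\frac{16-9a^2}{6b},\qquad r_s=4n+\tfrac{16}{3}-2a.$$
Multiplying $r_x-r_s$ by $6b>0$, the claim becomes $f(a,b)>0$ on $T$, where
$$f(a,b)=(8n+4+12a)\,b+16-9a^2.$$

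\textbf{Step 3: positivity on $T$.} The function $f$ is affine in $b$ with positive slope $8n+4+12a$, so for fixed $a$ it suffices to check the lower boundary of $T$.
\begin{itemize}
\item For $a\in(0,\frac43]$, the points of $T$ have $b>0$, so $f>16-9a^2\ge 0$.
\item For $a\in(\frac43,\frac83]$, the lower boundary is $b=\frac43(a-\frac43)$, which is positive. There I would factor $f$ as
$$\tfrac{4}{3}(8n+4)\left(a-\tfrac43\right)+7\left(a-\tfrac43\right)\left(a-\tfrac{12}{7}\right).$$
Using $n\ge1$, this is at least $(a-\frac43)(16+7a-12)=(a-\frac43)(7a+4)>0$.
\end{itemize}
Hence $f>0$ on all of $T$, and therefore the derivative is positive.

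\textbf{Main obstacle.} The only delicate point is the second case of Step 3. The $n$-independent part $7a^2-\frac{64}{3}a+16$ is negative for $a$ between $\frac43$ and $\frac{12}{7}$, so it is essential to keep the term $(8n+4)b$ and use the exact lower boundary line of $T$ from Proposition~\ref{prop:feas1}, rather than any cruder bound on $b$. Once $f$ is factored along that line, the sign is immediate.
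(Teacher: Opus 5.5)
Your proof is correct and follows essentially the same route as the paper. The chain rule gives $8(r_x-r_s)$, and your $f$ is exactly $3$ times the paper's bracket $\frac{16}{3}-3a^2+\left(\frac43+\frac83 n\right)b+4ab$. Positivity on $T$ then rests on the same two ingredients: linearity in $b$, and the lower boundary line $b=\frac43\left(a-\frac43\right)$, which leads to the same inequality $(3a-4)(7a+4)>0$. The only difference is cosmetic: you argue by monotonicity in $b$ directly, whereas the paper first sets $n=1$ and proves $F^H>0$ via a zero-set, connectedness and sample-point argument.
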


Throughout the proof we will make the change of variables from $(y,z)$ to $(a,b)$ given in \eqref{EQ:change_variables}. It will be convenient to define a function $F^H:\mathbb{R}^2\rightarrow \mathbb{R}$ by the formula 
$$
F^H(a,b) = \frac{16}{3} - 3a^2 + 4b+ 4ab.
$$
The map $F^H$ will arise in the proof of the proposition, and we will use the fact that it is positive on the region $T$ given in Proposition~\ref{prop:feas1}. 

\begin{lemma}\label{LEM:FH_positive}
For any $(a,b)\in T$ it holds that $F^H>0$.
\end{lemma}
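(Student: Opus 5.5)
Since $T$ is bounded, the plan is to show $F^H>0$ on the closure $\overline T$, which is compact. $F^H$ is linear in $b$ with coefficient $4+4a>0$ (because $a>0$ on $T$). So for each fixed $a$, $F^H$ is minimized on the lower boundary of $T$. Proposition~\ref{prop:feas1} says this lower boundary is $b=0$ when $a\in(0,\frac43]$ and $b=\frac43(a-\frac43)$ when $a\in(\frac43,\frac83]$. Points with $b=0$ are excluded from $T$, but the strict monotonicity in $b$ still gives $F^H(a,b)>F^H(a,0)$ for $b>0$. It therefore suffices to prove $F^H\ge 0$ on the lower boundary curve, with strict inequality wherever the boundary point actually lies in $T$.

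\textbf{First piece.} For $a\in(0,\frac43]$ we have
\[
F^H(a,0)=\tfrac{16}{3}-3a^2\ge \tfrac{16}{3}-\tfrac{16}{3}=0,
\]
with equality only at $a=\frac43$. Any point $(a,b)\in T$ with $a\le\frac43$ has $b>0$, so $F^H(a,b)>F^H(a,0)\ge 0$.

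\textbf{Second piece.} For $a\in[\frac43,\frac83]$, substitute $b=\frac43 a-\frac{16}{9}$:
\[
F^H=\tfrac{16}{3}-3a^2+\tfrac{16}{3}a-\tfrac{64}{9}+\tfrac{16}{3}a^2-\tfrac{64}{9}a
=\tfrac73 a^2-\tfrac{16}{9}a-\tfrac{16}{9}.
\]
At $a=\frac43$ this equals $\frac{112}{27}-\frac{64}{27}-\frac{48}{27}=0$. Its derivative $\frac{14}{3}a-\frac{16}{9}$ is positive for $a\ge\frac43$, so the expression is strictly positive on $(\frac43,\frac83]$. The points of $T$ on this segment therefore have $F^H>0$, and points of $T$ above it have even larger values.

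The only delicate spot is the corner $(a,b)=(\frac43,0)$, where $F^H=0$. It corresponds to $y$ or $z$ equal to $0$, so it is excluded from $T$; every actual point of $T$ near it has $b>0$ and hence $F^H>0$ by strict monotonicity in $b$. Combining the two pieces gives $F^H>0$ on $T$. I would double-check the arithmetic of the substitution, but no resultant or Sturm machinery should be needed.
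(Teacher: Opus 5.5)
Your proof is correct and is essentially the paper's argument: both exploit that $F^H$ is linear in $b$ and reduce to the same inequality $21a^2-16a-16=(7a+4)(3a-4)>0$ along the lower edge $b=\frac43(a-\frac43)$ for $a>\frac43$. The only differences are these. First, you use the positive $b$-coefficient $4+4a$ (monotonicity in $b$) directly, whereas the paper solves $F^H=0$ for $b$, shows that curve misses $T$, and concludes via connectedness of $T$ and a sample point. Second, your opening plan to prove $F^H>0$ on $\overline T$ is false at the corner $(\frac43,0)$, where $F^H=0$, but the argument you actually carry out never relies on it.
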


We postpone the proof of this lemma until the end of this section and proceed with the proof of Proposition~\ref{prop:Hprime}.

\begin{proof}[Proof of Proposition~\ref{prop:Hprime} assuming Lemma~\ref{LEM:FH_positive}]
Take an arbitrary metric $g_{\frac{4}{3},y,z,1}$ with $(y,z)\in\left(0,\frac{4}{3}\right]^2$, and let $g(\ell)$ denote its Ricci flow, i.e. $g(0)=g_{\frac{4}{3},y,z,1}$. To compute the derivative we use the chain rule:
    $$
    \frac{d}{d\ell}\Big\vert_{\ell=0} \tilde H_1(g(\ell)) =\nabla \tilde H_1(g(0)) g'(0), 
    $$ 
    where, analogously to the proof of Theorem \ref{THM:2-param}, we interpret $\nabla\tilde H_1(g(0))$ as a $1\times 4$ vector and $g'(0)$ as a $4\times 1$ vector. From the Ricci flow equation \eqref{EQ:RF_4param} we know that $g(\ell)$ is the family $g_{x(\ell),y(\ell),z(\ell),s(\ell)}$ determined by
$$
(x',y',z',s')=(-2r_x x,-2r_y y,-2r_z z, -2r_s s).
$$
For $\ell=0$ we have $(x,y,z,s)=(\frac{4}{3},y,z,1)$, so $g'(0)=(-\frac{8}{3}r_x,-2r_y y,-2r_z z,-2r_s)^T$.

The gradient of $\tilde H_1$ is $\nabla\tilde H_1 = \left( -\frac{3}{s}, 0, 0, \frac{3x}{s^2} \right)$, which at $g_{\frac{4}{3},y,z,1}$ equals $\nabla\tilde H_1(g(0)) = \left( -3, 0, 0, 4 \right)$. Then:
$$
\frac{d}{d\ell}\Big\vert_{\ell=0} \tilde H_1(g(\ell))=\left( -3, 0, 0, 4 \right)
\left(  -\frac{8}{3}r_x, -2r_y y,  -2r_z z, -2r_s \right)^T=8(r_x-r_s).
$$
In order to analyze the sign of this expression we use the variables $(a,b)\in T$ from \eqref{EQ:change_variables} instead of $(y,z)\in \left(0,\frac{4}{3}\right]^2$. The Ricci eigenvalues $r_x,r_s$ are given in Lemma \ref{lem:convertab}. At the metric $g_{\frac{4}{3},y,z,1}$, they equal
$$
r_x = 3+\frac{16n}{3}+3\frac{\frac{16}{9}-a^2 + 2b}{2b},\qquad r_s = 4(n+2) -\frac{8}{3}-2a.
$$
After substituting, we get
$$
\frac{d}{d\ell}\Big\vert_{\ell=0} \tilde H_1(g(\ell))=8(r_x-r_s)=\frac{4}{b}\left( \frac{16}{3} - 3a^2 + \left(\frac{4}{3} + \frac{8}{3}n\right)b+4ab\right).
$$

Now, by Lemma~\ref{LEM:FH_positive}, we know that $F^H(a,b)>0$ for all $(a,b)\in T$. Multiplying by $\frac{4}{b} > 0$, we get the following chain of inequalities on $T$
\begin{align*} 
0& <  \frac{4}{b}F^H(a,b)\\
&= \frac{4}{b}\left(\frac{16}{3}-3a^2 + 4b+4ab\right)\\
&= \frac{4}{b}\left( \frac{16}{3} - 3a^2 + \left(\frac{4}{3}+ \frac{8}{3}(1)\right)b + 4ab\right)\\
&\leq\frac{4}{b}\left( \frac{16}{3} - 3a^2 + \left(\frac{4}{3} + \frac{8}{3}n\right)b+4ab\right)\\
&= \frac{d}{d\ell}\Big\vert_{\ell=0}\tilde H_1(g(\ell)).
\end{align*} 
It follows that $\frac{d}{d\ell}\Big\vert_{\ell=0}\tilde H_1(g(\ell)) > 0$ on $T$.
\end{proof}

We now prove that $F^H>0$ on $T$.

\begin{proof}[Proof of Lemma~\ref{LEM:FH_positive}]
We will show that the graph determined by the equation $F^H = 0$ does not intersect the region $T$. As $T$ is obviously connected and $F^H$ is continuous on $\R^2$, it will follow that the sign of $F^H$ is constant on $T$. Taking the sample point $\left(1,\frac{1}{8}\right)\in T$ we see that $F^H\left(1,\frac{1}{8}\right) = \frac{10}{3} > 0$, so this will complete the proof that $F_H > 0$ on $T$.

Hence let us show that $F^H = 0$ does not intersect $T$. The equation $F^H=0$ is linear in $b$.  Solving for $b$, we find $$b = \frac{9a^2-16}{12(a+1)}.$$

We first observe that for $a \leq \frac{4}{3}$ this formula gives $b\leq 0$, so $(a,b)\notin T$. Thus, any point in the intersection of $T$ and the graph of $F^H=0$ must have $a \in \left(\frac{4}{3},\frac{8}{3}\right]$. For these values of $a$, the corresponding $b$-values satisfy $\frac{4}{3}\left(a-\frac{4}{3}\right)\leq b\leq \frac{a^2}{4}$. Hence, to see that the graph of $F_H=0$ does not intersect the region $T$, it is sufficient to show that $\frac{9a^2-16}{12(a+1)}< \frac{4}{3}\left(a - \frac{4}{3}\right)$ for all $a\in\left(\frac{4}{3}, \frac{8}{3}\right]$. This holds if and only if $9a^2 - 16 < 16a^2-\frac{16}{3}a -\frac{64}{3}$, which holds if and only if $0 < 7a^2 -\frac{16}{3}a -\frac{16}{3} = \frac{1}{3}(7a+4)(3a-4)$. For $a > \frac{4}{3}$ both factors $(7a+4)$ and $(3a-4)$ are positive, which implies the desired inequality.
\end{proof}

\section{\texorpdfstring{Second case: $V_1=0$}{Second case: V1=0}}\label{sec:V}

Recall the set $\mathcal B_V:=\{g_{x,y,z,1}\in\partial \mathcal P\; :\; V_1(x,y,z)=0\}$ defined in Section~\ref{SEC:4param}. In this section we prove the following proposition. 

\begin{proposition}\label{prop:V}
Suppose $g_{x,y,z,1}\in\mathcal B_V$, and let $g(\ell)$ denote the Ricci flow of $g(0)=g_{x,y,z,1}$, with $\ell\in (L_-,L_+)$ and $-\infty\leq -L<0<L_+\leq\infty$. Then there is some $\varepsilon>0$ such that $g(\ell)\notin\mathcal P$ for all $\ell\in (-\varepsilon,0)$.
\end{proposition}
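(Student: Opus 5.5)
The plan mirrors the proof of Proposition~\ref{prop:H}: I will show that $\frac{d}{d\ell}\big\vert_{\ell=0}\tilde V_1(g(\ell))>0$ for every $g_{x,y,z,1}\in\mathcal B_V$. Since $\tilde V_1(g(0))=0$, this gives $\tilde V_1(g(\ell))<0$, hence $g(\ell)\notin\mathcal P$, for $\ell\in(-\varepsilon,0)$. The function $V_1$ is smooth on $\R_+^3$, so no regularity issue arises here.

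\textbf{Step 1: the derivative.} Since $V_1$ is homogeneous of degree one in $(x,y,z)$, we have $\tilde V_1(x,y,z,s)=V_1(x,y,z)/s$. The term coming from $s'$ is $-V_1s'/s^2$, which vanishes at a point where $V_1=0$ and $s=1$. By \eqref{EQ:RF_4param} the derivative therefore reduces to
\[
-2\left(x r_x\,\partial_xV_1 + y r_y\,\partial_yV_1 + z r_z\,\partial_zV_1\right).
\]
The sum of the last two terms is symmetric under $y\leftrightarrow z$, even though $r_y$ and $r_z$ individually are not. So, after multiplying by a positive denominator (a power of $x$, $b$ and $s$), the whole expression becomes a polynomial in $(x,a,b)$ via \eqref{EQ:change_variables} and Lemma~\ref{lem:convertab}. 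The dimension $n$ enters only linearly, through the terms $4nx/s^2$ and $4ny/s^2$, $4nz/s^2$. As in the proof of Proposition~\ref{prop:Hprime}, I expect its coefficient to be nonnegative on the relevant set, which would reduce the problem to $n=1$. If the coefficient is not nonnegative, I would instead treat $n$ as a parameter and check the sign for $n=1$ and $n\to\infty$ separately.

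\textbf{Step 2: solving $V_1=0$.} By Lemma~\ref{lem:convertab}, $V_1=0$ means $3x^2-2ax-(a^2-4b)=0$. The roots are $x=\tfrac{1}{3}\bigl(a\pm2\sqrt{a^2-3b}\bigr)$, and only the root with the plus sign can be positive on $T$ when the product of the roots is negative. The equation also forces $a^2\ge 3b$. I will substitute $w=\sqrt{a^2-3b}\ge0$, so that $b=(a^2-w^2)/3$ and $x=(a+2w)/3$. The derivative then becomes a rational function of $(a,w)$ with a controlled-sign denominator, and its numerator $N(a,w)$ is a polynomial. The feasible region $T_V$ is the preimage of $T$ intersected with $x\le 4/3$ from Remark~\ref{REM:square_0_43}. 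If necessary I will also impose the further closure constraints of $\overline{\mathcal P}$, namely $V_2,V_3\ge0$ and $P_1\ge0$.

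\textbf{Step 3: sign of $N$ on $T_V$ (the main obstacle).} I will follow the proof of Lemma~\ref{LEM:FH_positive}: show that the curve $N=0$ does not meet $T_V$, then evaluate $N$ at one sample point. To locate $N=0$, I will check the boundary arcs of $T_V$ (the pieces of $b=a^2/4$, $b=\frac43(a-\frac43)$, $x=4/3$, $w=0$) by restricting $N$ to each arc and applying Sturm's Theorem~\ref{T:Sturm_root} to the resulting one-variable polynomials. For interior critical points, I will compute $\res_w(N,\partial_wN)$ and apply Sturm's theorem in $a$ on the admissible interval.

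The real difficulty is that the bare region $T\cap\{x\le4/3\}$ may be too large, since $N$ could become nonpositive near corners such as $b\to0$ or $y$ or $z$ close to $0$. If that happens, I will first try adding the constraints $V_2,V_3\ge0$, which cut off exactly the degenerate region where one of $y,z$ is small relative to the others, and redo the resultant computation on the smaller region. Only if that is still insufficient would I bring in $P_1\ge0$, splitting according to the sign of $b-a+x$ so as to remove the absolute value.
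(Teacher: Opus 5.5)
Your proposal is correct and follows essentially the same route as the paper. You differentiate $\tilde V_1$ along the flow, discard the root $x_-$, substitute $x=x_+$, and use that the coefficient of $n$ is nonnegative to reduce to $n=1$. Your Vieta argument for discarding $x_-$, namely $x_+x_-=-(a^2-4b)/3\le 0$ with $x_+>0$, is a cleaner replacement for Lemma~\ref{LEM:no_x_minus}, and it also covers the arc $b=a^2/4$, where $x_-=0$.

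The final sign check is much easier than you anticipate. With $w=\sqrt{a^2-3b}$ the derivative collapses to $\frac{64a(a-w)}{(a+w)(a+2w)}+\frac{32n}{3}(a-w)^2$, which is equivalent to the paper's factorization of $F_1^V$. This is positive on all of $T$ because $a/2\le w<a$ there. So you need no Sturm or resultant work, and none of the extra constraints $x\le\frac43$, $V_2,V_3\ge 0$ or $P_1\ge 0$. One small correction: in your coordinates the relevant boundary arc is $w=a$ (i.e.\ $b=0$), where the numerator vanishes, not $w=0$.
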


First, we need to study the set $\mathcal B_V$. This case is more complicated than that of $\mathcal B_H$ from Section~\ref{sec:H} and it will take some work to describe a certain set containing $\mathcal B_V$.  This will be achieved in \eqref{EQ:set_BV} below. To do so, we already switch to the variables $(a,b)$ from \eqref{EQ:change_variables} instead of $(y,z)$, although we continue to write the metrics $g_{x,y,z,1}$ in terms of the variables $y,z$. Throughout this section we will use that $(a,b)\in T$ for any metric in $\mathcal B_V$, see Remark~\ref{REM:square_0_43} and Proposition~\ref{prop:feas1}. Recall from Lemma \ref{lem:convertab} that 
$$
V_1 = \frac{a^2-4b+2xa - 3x^2}{x}.
$$  
Suppose $g_{x,y,z,1}$ satisfies $V_1=0$.  Then the numerator, which is quadratic in $x$, must vanish.  It follows that $x$ must be equal to one of $x_\pm$, where
\begin{equation}\label{eq:x_plus}
    x_\pm = \frac{a \pm 2\sqrt{a^2 -3b}}{3}.
\end{equation}
The first observation is that $x_-$ cannot occur in our setting.

\begin{lemma}\label{LEM:no_x_minus}
There is no metric of the form $g_{x,y,z,1}$ with $x=x_-$ in $\mathcal{B}_V$.
\end{lemma}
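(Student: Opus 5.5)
The plan is to show directly that $x_-$ is never positive when $(a,b)\in T$. Since every metric $g_{x,y,z,1}$ has $x>0$, this rules out $x=x_-$ for any metric in $\mathcal B_V$.

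First I check that $x_\pm$ are real. By Remark~\ref{REM:square_0_43} and Proposition~\ref{prop:feas1}, any metric in $\mathcal B_V$ has $(a,b)=(y+z,yz)\in T$, so $0<b\le \frac{a^2}{4}$ and $a>0$. Hence $a^2-3b\ge a^2-\frac{3a^2}{4}=\frac{a^2}{4}\geq 0$. This also follows from $a^2-4b=(y-z)^2\ge 0$. So the square root in \eqref{eq:x_plus} is well defined and satisfies $\sqrt{a^2-3b}\ge \frac{a}{2}$.

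Next I compare $a$ with $2\sqrt{a^2-3b}$. Since $a>0$, the inequality $x_->0$ is equivalent to $a>2\sqrt{a^2-3b}$. Squaring both nonnegative sides, this is equivalent to $a^2>4a^2-12b$, that is, $b>\frac{a^2}{4}$. This contradicts the description of $T$ in Proposition~\ref{prop:feas1}. Put directly, the bound $\sqrt{a^2-3b}\ge\frac a2$ gives
$$
x_-=\frac{a-2\sqrt{a^2-3b}}{3}\le \frac{a-a}{3}=0,
$$
with equality exactly on the parabola $b=\frac{a^2}{4}$, i.e. when $y=z$. Therefore $x_-\le 0$ on all of $T$. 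Since metrics $g_{x,y,z,1}$ require $x\in\mathbb R_+$ (indeed $x\in(0,\frac43]$ by Remark~\ref{REM:square_0_43}), no metric in $\mathcal B_V$ can have $x=x_-$. Only $x=x_+$ remains.

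There is no real obstacle here; the argument is a one-line inequality. The only point needing care is the boundary case $b=\frac{a^2}{4}$. There $x_+$ and $x_-$ are distinct, since $x_+=\frac{2a}{3}$ and $x_-=0$, so excluding $x_-$ uses strict positivity of $x$ and not merely a sign argument.
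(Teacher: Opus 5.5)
Your proof is correct and rests on the same fact as the paper's: $x_-$ vanishes exactly on the parabola $b=\frac{a^2}{4}$ that bounds $T$. The paper gets $x_-\le 0$ indirectly, by showing the zero set of $x_-$ meets $T$ only on that boundary curve and checking the sample point $\left(1,\frac{5}{27}\right)$ where $x_-=-\frac{1}{9}$. Your bound $\sqrt{a^2-3b}\ge\frac{a}{2}$ gives $x_-\le 0$ on all of $T$ in one line, with no continuity or connectedness argument needed.
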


\begin{proof} 
Note that $x_-(a,b)$ defines a continuous function on the region $b\leq\frac{a^2}{3}$, which contains the region $T$. We will show that the graph of $x_-(a,b) = 0$ only intersects $T$ along the boundary and that $x_- < 0$ at one point in the interior of $T$. This implies that $x_-\leq 0$ on $T$. That is, $x_-\notin \left(0,\frac{4}{3}\right]$, hence Remark~\ref{REM:square_0_43} implies that $g_{x_-,y,z,1}\notin \mathcal{B}_V$.

As a sample point in $T$ we take $(a,b) = \left(1,\frac{5}{27}\right)$, where $\sqrt{a^2-3b}=\frac{2}{3}$. We compute $x_-\left(1,\frac{5}{27}\right)=-\frac{1}{9}<0$.

Now we study the intersection of the set $x_-(a,b)=0$ and $T$. Note that the equation $x_- = 0$ is equivalent to $a = 2\sqrt{a^2 - 3b}$, i.e., they have precisely the same solution set. Because $a>0$ on $T$, it is equivalent to $a^2 = 4(a^2 - 3b)$. This simplifies to $b= \frac{a^2}{4}$, which is a boundary component of $T$. This completes the proof.
\end{proof}

Thus, metrics in $\mathcal B_V$ must be of the form $g_{x,y,z,1}$ with $(y,z)\in \left(0,\frac{4}{3}\right]^2$ and $x=x_+$ given by the formula in \eqref{eq:x_plus}. We deduce that in this situation we have the obvious inclusion
\begin{equation}\label{EQ:set_BV}
\mathcal B_V\subset\left\{g_{x_+,y,z,1}\; :\; (y,z)\in \left(0,\tfrac{4}{3}\right]^2 \right\}.
\end{equation}
Clearly $V_1$ is smooth on $\R_+^3$. Since $\tilde V_1=\frac{V_1}{s}$, it follows that $\tilde V_1$ is smooth on $\R_+^4$, and in particular at any point of $\{g_{x_+,y,z,1}\; :\; (y,z)\in \left(0,\frac{4}{3}\right]^2 \}$. Proposition~\ref{prop:V} now follows easily from the following proposition.

\begin{proposition}\label{prop:Vprime}  
    For all $(y,z)\in \left(0,\frac{4}{3}\right]^2$ it holds that $\frac{d}{d\ell}\Big\vert_{\ell=0} \tilde V_1(g(\ell)) > 0$, where $g(\ell)$ denotes the Ricci flow of $g(0)=g_{x_+,y,z,1}$.
\end{proposition}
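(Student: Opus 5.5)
Following the model of Proposition~\ref{prop:Hprime}, I will compute the derivative by the chain rule:
\[
\frac{d}{d\ell}\Big\vert_{\ell=0}\tilde V_1(g(\ell))=\nabla\tilde V_1(g(0))\,g'(0),\qquad g'(0)=(-2r_xx,-2r_yy,-2r_zz,-2r_s)^T .
\]
Since $\tilde V_1(x,y,z,s)=V_1(x/s,y/s,z/s)$ is homogeneous of degree $1$, Euler's identity gives $\partial_s\tilde V_1=-(x\partial_xV_1+y\partial_yV_1+z\partial_zV_1)+V_1$ at $s=1$. Because $V_1=0$ at $x=x_+$, this becomes
\[
\frac{d}{d\ell}\Big\vert_{\ell=0}\tilde V_1 = -2\sum_{w\in\{x,y,z\}} w\,\partial_w V_1\,(r_w-r_s).
\]

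\textbf{Step 1: removing the asymmetry in $y,z$.} The terms in $y$ and $z$ are not symmetric individually. I will write $N=y^2+z^2-3x^2+2xy+2xz-2yz$, so that $V_1=N/x$ and, on $N=0$, $\partial_wV_1=\partial_wN/x$. I will then expand $y\partial_yN\,(r_y-r_s)+z\partial_zN\,(r_z-r_s)$ using \eqref{EQ:Ricci_eigen_4param}. This expression is symmetric in $(y,z)$, so it can be rewritten in terms of $a,b$ together with $x$, using Lemma~\ref{lem:convertab} for $r_x,r_s$. After clearing the positive denominators ($x$, $b$, $xyz$), the derivative should equal a positive factor times a function $F^V(x,a,b,n)$.

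\textbf{Step 2: substituting $x=x_+$.} I will substitute $x=x_+=(a+2\sqrt{a^2-3b})/3$. To avoid working with the radical, I plan to set $u=\sqrt{a^2-3b}\ge 0$, so that $b=(a^2-u^2)/3$. Then $F^V$ becomes a rational function of $(a,u)$, and $T$ corresponds to an explicit region $T'$ in the $(a,u)$-plane. Two constraints define $T'$. First, $b\le a^2/4$ gives $u\ge a/2$. Second, $x_+\le 4/3$ must hold (Remark~\ref{REM:square_0_43}), since only such points can lie in $\mathcal B_V$; this gives $a+2u\le 4$. I also expect $F^V$ to be affine in $n$. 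In that case, as in Section~\ref{sec:H}, it suffices to show that the coefficient of $n$ is nonnegative on $T'$ and that $F^V|_{n=1}>0$ there.

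\textbf{Step 3: positivity, the main obstacle.} I expect the hard step to be proving $F^V>0$ on the two-variable region $T'$. My first approach will copy the proof of Lemma~\ref{LEM:FH_positive}:
\begin{enumerate}[(i)]
\item check the sign of $F^V$ at one sample point of $T'$;
\item show that the zero curve $F^V=0$ does not meet $T'$.
\end{enumerate}
For (ii), I will restrict $F^V$ to each boundary curve of $T'$ and show that the resulting one-variable polynomial has no roots there, using Sturm's Theorem~\ref{T:Sturm_root} via Remark~\ref{REM:Sturm}. For interior zeros, I will locate critical points of $F^V$ on its zero set by computing the resultant $\res_u(F^V,\partial_aF^V)$ and applying Sturm sequences to it. Alternatively, if $F^V$ is low degree in one variable, I will solve for that variable and compare it with the boundary curves, as was done for $F^H$. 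If a direct bound fails near a corner such as $u=a/2$, I will subdivide $T'$ and treat each piece separately.
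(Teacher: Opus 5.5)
Your setup matches the paper's. This covers the chain rule with $g'(0)=(-2r_xx,-2r_yy,-2r_zz,-2r_s)^T$ and the vanishing of the $s$-component of $\nabla\tilde V_1$ at $V_1=0$. Your final formula also coincides with \eqref{eq:deriv_V1}, since $\sum_w w\,\partial_wV_1=V_1=0$ kills the $r_s$-terms. There is a harmless slip: $\tilde V_1$ is homogeneous of degree $0$ in $(x,y,z,s)$, and at $s=1$ one has $\partial_s\tilde V_1=-\sum_w w\,\partial_wV_1=-V_1$, without your extra $+V_1$.

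The first real problem is your region $T'$, which is wrong in two ways.
\begin{enumerate}
\item You impose $x_+\le\frac43$, but the proposition claims positivity for every $(y,z)\in(0,\frac43]^2$. On part of $T$ one has $x_+>\frac43$; for instance at $y=z=\frac43$ you get $a=\frac83$, $b=\frac{16}{9}$, $u=\frac43$, $x_+=\frac{16}{9}$. So even a finished Step~3 would give only what Proposition~\ref{prop:V} uses, not the stated result.
\item You omit the constraint $b>0$, i.e.\ $u<a$, and this is the constraint that matters. The expression carries a factor $(a-u)^2$ and vanishes identically on the line $u=a$, which passes through the interior of your set $\{u\ge a/2,\ a+2u\le 4\}$. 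Strict positivity is false on that line. Once $u<a$ is restored, that line becomes a boundary edge on which the restriction in your Step~3(ii) is the zero polynomial. Sturm's theorem then cannot be applied without first dividing out $(a-u)^2$.
\end{enumerate}

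More importantly, Step~3, where the proof actually lives, is only a generic plan, and no zero-set analysis is needed because the expression factors. With $u=\sqrt{a^2-3b}$, the paper's $n$-coefficient $-8b(u-a)^2(2u+a)^2$ and its $F_1^V=-8(2u+a)(a-u)^2\bigl(2bu+a(b+2)\bigr)$ combine to
\[
F_n^V=-8(a-u)^2(a+2u)\bigl(nb(a+2u)+2a\bigr),
\qquad\text{hence}\qquad
\frac{d}{d\ell}\Big\vert_{\ell=0}\tilde V_1(g(\ell))=\frac{32}{3}(a-u)^2\Bigl(n+\frac{2a}{b(a+2u)}\Bigr).
\]
This is positive on all of $T$ exactly because $b>0$ forces $0\le u<a$. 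Your substitution $b=(a^2-u^2)/3$ is the right move to find this. Factor the resulting expression in $(a,u)$ rather than studying its zero set with Sturm sequences and resultants, and drop the restriction $x_+\le\frac43$.
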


Towards proving this proposition, we now define, for each $n\in \mathbb{N}$, the function 
$$
F^V_n:\{(a,b)\in \mathbb{R}^2 \;:\; a^2 - 3b\geq 0\}\rightarrow \mathbb{R}
$$
given by 
\begin{align*}
F_n^V(a,b) =&n\left(-8b\bigl(\sqrt{a^2-3b}-a\bigr)^2\bigl(2\sqrt{a^2-3b}+a\bigr)^2 \right)\\
    &+\sqrt{a^2 - 3b}(-32a^3 + 96ab) + 32a^4 - 144a^2b.    
\end{align*}
We record several observations about $F^V_n$.  First, since $b\leq  \frac{a^2}{4} < \frac{a^2}{3}$ on $T$, it follows that $a^2 - 3b > 0$ on $T$, so each $F_n^V$ defines a continuous function when restricted to $T$.  Second, since $b > 0$ on $T$, the coefficient of $n$ is non-positive, which implies that $F_n(a,b)\leq F_1(a,b)$ for all $(a,b)\in T$.  Third, it can be checked that
$$
F_1^V(a,b) = -8 \left(2 \sqrt{a^{2}-3 b}+a\right) \left(a-\sqrt{a^{2}-3 b}\right)^{2} \left(2 b \sqrt{a^{2}-3 b}+a \left(b+2\right)\right),
$$ 
which is strictly negative on $T$.  Together, these imply that $F_n(a,b) < 0$ for all $(a,b)\in T$. Let us state it for later reference.

\begin{lemma}\label{LEM:FV_negative}
    For any $(a,b)\in T$ and $n\in\mathbb N$ it holds that $F_n(a,b) < 0$.
\end{lemma}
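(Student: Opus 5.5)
The plan is to bound $F^V_n$ above by $F^V_1$ and then read off the sign of $F^V_1$ from its factorization. I would work on $T$ with the auxiliary quantity $w:=\sqrt{a^2-3b}$. On $T$ we have $a>0$ and $0<b\le \frac{a^2}{4}<\frac{a^2}{3}$ by Proposition~\ref{prop:feas1}. Hence $w$ is real and positive, and, since $b>0$, also $w<a$. More precisely, $w\in\bigl[\tfrac{a}{2},a\bigr)$, because $b\le \frac{a^2}{4}$ gives $w^2\ge \frac{a^2}{4}$.

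\textbf{Step 1 (reduction to $n=1$).} The coefficient of $n$ in $F^V_n$ is $-8b\,(w-a)^2(2w+a)^2$. Since $b>0$ on $T$, this coefficient is $\le 0$. Therefore $F^V_n(a,b)\le F^V_1(a,b)$ for every $n\in\mathbb N$ and every $(a,b)\in T$, and it suffices to show $F^V_1<0$ on $T$.

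\textbf{Step 2 (verifying the factorization).} I would substitute $b=\frac{a^2-w^2}{3}$ everywhere. Both the defining expression for $F^V_1$ and the claimed product
$$-8(2w+a)(a-w)^2\bigl(2bw+a(b+2)\bigr)$$
then become polynomials in $\mathbb{Q}[a,w]$. The identity is equivalent to the equality of these two polynomials, because $(a,b)\mapsto(a,w)$ is a bijection onto its image with $b$ a polynomial in $(a,w)$. Checking it is a routine expansion, which I would do by hand or with Maple's \verb+expand+/\verb+simplify+, in the same spirit as the other computer-assisted computations in the paper. This step is mechanical; it is the only real work, since it carries all the bookkeeping.

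\textbf{Step 3 (sign of each factor).} On $T$:
\begin{itemize}
\item $-8<0$;
\item $2w+a>0$, since $a>0$ and $w\ge 0$;
\item $(a-w)^2>0$, since $w<a$ strictly, because $b>0$;
\item $2bw+a(b+2)>0$, since $a,b>0$ and $w\ge0$.
\end{itemize}
Hence $F^V_1<0$ on $T$, and by Step 1 $F^V_n\le F^V_1<0$ on $T$ for all $n\in\mathbb N$, which proves the lemma.

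The main point of care is the strictness in the factor $(a-w)^2$. The sign would fail exactly where $w=a$, that is where $b=0$, and this is excluded from $T$ by Proposition~\ref{prop:feas1}. I would state that explicitly.
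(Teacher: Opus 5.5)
Your proposal is correct and follows the paper's own argument. The coefficient of $n$ is non-positive because $b>0$, which gives $F^V_n\le F^V_1$, and the factorization of $F^V_1$ then shows it is strictly negative on $T$; strictness of $(a-w)^2>0$ comes from excluding $b=0$. Your substitution $b=\frac{a^2-w^2}{3}$ does confirm that factorization: the $n$-free part collapses to $-16a(a-w)^2(a+2w)$, and adding $-8b(a-w)^2(a+2w)^2$ yields $-8(2w+a)(a-w)^2\bigl(2bw+a(b+2)\bigr)$.
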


We are now in a position to prove Proposition \ref{prop:Vprime}.

\begin{proof}[Proof of Proposition \ref{prop:Vprime}]
Take an arbitrary metric $g_{x_+,y,z,1}$ with $(y,z)\in\left(0,\frac{4}{3}\right]^2$, and let $g(\ell)$ denote its Ricci flow, i.e. $g(0)=g_{x_+,y,z,1}$. As in the proof of Proposition~\ref{prop:Hprime}, to compute the derivative of $\tilde V_1(g(\ell))$ at $\ell=0$  we use the chain rule:
\begin{equation}\label{EQ:chain_for_V1}
        \frac{d}{d\ell}\Big\vert_{\ell=0}  \tilde V_1(g(\ell)) =\nabla  \tilde V_1(g(0)) g'(0). 
\end{equation}
It follows from the  Ricci flow equation \eqref{EQ:RF_4param}  that $g
(\ell)$ is the family $g_{x(\ell),y(\ell),z(\ell),s(\ell)}$ determined by
$$(x',y',z',s')=(-2r_x x,-2r_y y,-2r_z z, -2r_s s).$$
At $\ell=0$ we have $(x,y,z,s)=(x_+,y,z,1)$, so $g'(0)=(-2r_x x_+,-2r_y y,-2r_z z,-2r_s)^T$.

Now we compute the gradient of $\tilde V_1$. Observe that $\tilde V_1=\frac{V_1}{s}$. Then the gradient is
$$
\nabla\tilde V_1=\left( \frac{1}{s}\frac{\partial V_1}{\partial x}  , \frac{1}{s}\frac{\partial V_1}{\partial y} , \frac{1}{s}\frac{\partial V_1}{\partial z} , \frac{1}{s^2}\left(s\frac{\partial V_1}{\partial s}-V_1    \right)    \right)
$$
Note that $\frac{\partial V_1}{\partial s}=0$ because $V_1$ does not depend on $s$. Also, at $g(0)=g_{x_+,y,z,1}$ we have $s=1$ and $V_1=0$. Thus, the gradient reduces to
\begin{align}
\nabla\tilde V_1(g(0))&=\left( \frac{\partial V_1}{\partial x}  , \frac{\partial V_1}{\partial y} , \frac{\partial V_1}{\partial z} , 0  \right)\Big\vert_{g(0)} \nonumber\\
& =\left( \frac{(-6x+2y+2z)x-xV_1}{x^2}, \frac{2(y+x-z)}{x} , \frac{2(z+x-y)}{x},0 \right)\label{EQ:gradient_V} \\
& = \frac{2}{x}\left( -3x+y+z, y+x-z , z+x-y,0 \right).\nonumber
\end{align}
Inserting the corresponding values in \eqref{EQ:chain_for_V1} we find that:
\begin{equation}\label{eq:deriv_V1}
    \frac{d}{d\ell}\Big\vert_{\ell=0} \tilde V_1(g(\ell))= -\frac{4}{x}\Bigl( (-3x+y+z)r_x x + (y+x-z)r_y y + (z+x-y)r_z z    \Bigr).
\end{equation}
Observe that in the equations \eqref{EQ:gradient_V} and \eqref{eq:deriv_V1} we have dropped the subindex $+$ from $x_+$ in order to simplify the notation.

Now we make the following changes in \eqref{eq:deriv_V1}. First, we substitute $r_x,r_y,r_z$ by their values from \eqref{EQ:Ricci_eigen_4param} at the metric $g_{x,y,z,1}$. This gives an expression in the variables $x,y,z$ which is symmetric in $y,z$. Second, we make the change of variables from $(y,z)\in \left(0,\frac{4}{3}\right]^2$ to $(a,b)\in T$, thus obtaining an expression in the variables $x,a,b$. Third and last, we substitute $x$ by $x_+=\frac{a + 2\sqrt{a^2 -3b}}{3}$, which gives an expression in $a,b$. These tedious computations give the following identity:
$$
\frac{d}{d\ell}\Big\vert_{\ell=0} \tilde V_1(g(\ell)) = -\frac{4}{x_+}\left( \frac{F_n^V(a,b)}{9b\left(a + 2\sqrt{a^2-3b}\right)}\right).
$$  
Since $-\frac{4}{x_+} < 0$ and both $a$ and $b$ are positive, it now easily follows that $\frac{d}{d\ell}\Big\vert_{\ell=0} \tilde V_1(g(\ell)) > 0$ if and only if $F_n^V(a,b) <0$ for all $(a,b)\in T$. The latter holds by Lemma~\ref{LEM:FV_negative}.
\end{proof}

\section{\texorpdfstring{Third case: $P_1=0$}{Third case: P1=0}}\label{SEC:P1_0}

Recall the set $\mathcal B_P:=\{g_{x,y,z,1}\in\partial \mathcal P\; :\; P_1(x,y,z)=0\}$ defined in Section~\ref{SEC:4param}. Over this and the next section, we prove the following proposition. 

\begin{proposition}\label{prop:P}
Suppose $g_{x,y,z,1}\in\mathcal B_P$, and let $g(\ell)$ denote the Ricci flow of $g(0)=g_{x,y,z,1}$, with $\ell\in (L_-,L_+)$ and $-\infty\leq -L<0<L_+\leq\infty$. Then there is some $\varepsilon>0$ such that $g(\ell)\notin\mathcal P$ for all $\ell\in (-\varepsilon,0)$.
\end{proposition}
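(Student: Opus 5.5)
By Propositions~\ref{prop:H} and \ref{prop:V}, any metric in $\mathcal B_P\cap(\mathcal B_H\cup\mathcal B_V)$ is already handled. So it suffices to treat $g_{x,y,z,1}\in\mathcal B_P\setminus(\mathcal B_H\cup\mathcal B_V)$. At such a point $H_1>0$ and $V_1>0$, so $\sqrt{H_1V_1}$ is smooth nearby. The only possible non-smoothness of $P_1$ then comes from the absolute value $|b-a+x|$.

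\textbf{Step 1: the non-smooth locus.} I first rule out $b-a+x=0$ on $\mathcal B_P\setminus(\mathcal B_H\cup\mathcal B_V)$. If $x=a-b$, then $P_1=0$ forces $\sqrt{H_1V_1}=-b<0$, which is impossible. Hence $P_1$, and so $\tilde P_1$, is smooth at every point under consideration, and the sign $\epsilon=\operatorname{sign}(b-a+x)\in\{\pm1\}$ is locally constant. With this sign fixed, $P_1=0$ means
$$\sqrt{H_1V_1}=3\epsilon(b-a+x)-b.$$
The right-hand side must be positive. Squaring gives a polynomial relation
$$G_\epsilon(x,a,b):=(4-3x)(a^2-4b+2ax-3x^2)-x\bigl(3\epsilon(b-a+x)-b\bigr)^2=0,$$
which is cubic in $x$.

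\textbf{Step 2: localize the zero set.} Instead of solving the cubic explicitly, I want a small, explicit region $\mathcal Z\subset T$ (possibly together with a small $x$-interval) that contains all solutions satisfying the side conditions. These conditions are $x\in(0,\tfrac43)$, $V_1>0$, the sign conditions above, and $H_2,H_3,V_2,V_3,P_2,P_3\ge0$. To do this I would intersect $G_\epsilon=0$ with the boundary curves of $T$ and with the side-condition curves, using resultants such as $\res_x(G_\epsilon,\partial_xG_\epsilon)$ and $\res_x(G_\epsilon,\text{side condition})$. I then locate the real roots of the resulting one-variable polynomials with Sturm sequences (Theorem~\ref{T:Sturm_root}, Remark~\ref{REM:Sturm}), and test sample points in each resulting cell.

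I expect the outcome is that only one sign $\epsilon$ is possible, likely $\epsilon=-1$, i.e.\ $x<a-b$. I also expect the admissible $(a,b)$ to lie near the corner $(\tfrac83,\tfrac{16}9)$ or near the diagonal $b=a^2/4$, which corresponds to the Verdiani--Ziller borderline metrics with $y=z$.

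\textbf{Step 3: the derivative.} As in Propositions~\ref{prop:Hprime} and \ref{prop:Vprime}, I compute $\frac{d}{d\ell}|_{\ell=0}\tilde P_1(g(\ell))=\nabla\tilde P_1\cdot(-2r_xx,-2r_yy,-2r_zz,-2r_s)^T$ using \eqref{EQ:Ricci_eigen_4param}. Because $\tilde P_1$ is homogeneous of degree zero, the $s$-component of the gradient is determined by the others, namely $-(x\partial_x+y\partial_y+z\partial_z)P_1$. The derivative is symmetric in $y,z$, so it can be written in $(x,a,b)$ and is affine in $n$. Its sign should be controlled by the $n=1$ case together with the sign of the $n$-coefficient, as in Lemma~\ref{LEM:FV_negative}. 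On the zero set I replace $\sqrt{H_1V_1}$ by $3\epsilon(b-a+x)-b$, which turns the derivative into a rational function of $(x,a,b)$. It then remains to show that its numerator $N$ has a fixed sign on $\{G_\epsilon=0\}\cap\mathcal Z$. For this I would compute $\res_x(G_\epsilon,N)$, show via Sturm sequences that it has no roots in the projection of $\mathcal Z$, and check the sign at one sample point, using connectedness.

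The main obstacle is Step~2 together with this final resultant. The polynomials will have high degree, and boundary points of $T$ or corners where several functions vanish simultaneously may produce spurious roots. I would first try to subdivide $\mathcal Z$ into rectangles with rational corners and run Sturm counts on each.
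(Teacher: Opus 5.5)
Your Step~1 and the outline of Step~3 agree with the paper: the chain rule, symmetry in $y,z$, affine dependence on $n$, and replacing $\sqrt{H_1V_1}$ by $3\epsilon(b-a+x)-b=-4b+3a-3x$. But Steps~2 and~3 carry all of the content and are still only a plan. For Step~2, your squared relation is not cubic. The $9x^3$ terms cancel, and in fact $G_{+1}=4(Ax^2+Bx+C)$ and $G_{-1}=4(Dx^2+Ex+C)$ with $C=a^2-4b$. Using these explicit roots, the paper argues by continuity and sample points on the components of $T\setminus\{A=0\}$ and $T\setminus\{D=0\}$. It shows that $\epsilon=+1$ and the root $x_3$ never occur. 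The surviving metrics have $x=x_4$ and $(a,b)\in S=\{0<b\le a^2/4,\ b>a-\tfrac34\}\subset(0,1)\times(0,\tfrac14)$, which is near the origin rather than near $(\tfrac83,\tfrac{16}9)$. Only $H_2,H_3\ge0$ are used as side conditions. The conditions on $V_2,V_3,P_2,P_3$, which are not symmetric in $y,z$, are never needed.

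The genuine gap is the certification at the end of Step~3. $\res_x(G_\epsilon,N)$ is a polynomial in two variables $(a,b)$, so a Sturm count alone cannot show it has no zeros in a planar region. More seriously, it vanishes wherever $N$ vanishes at \emph{either} root of the quadratic, and here it does vanish inside the region you must certify. On the edge $b=a^2/4$ (that is, $y=z$) one has $C=0$, so the second root is $x=0$. Your numerator also vanishes there: in the paper's normalization $D_n=Fn+G$, $F$ has a factor $x$ and $G(0,a,b)=-8(3a^2-4b)(a^2-4b)$. Hence the resultant vanishes along this whole edge. That edge consists of genuine points of $\mathcal B_P\setminus(\mathcal B_H\cup\mathcal B_V)$; for example at $(x,y,z)=(\tfrac{13}{40},\tfrac14,\tfrac14)$ one has $P_1=0$ and the other eight functions positive. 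No subdivision into rectangles removes this.

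The paper handles such spurious loci in two steps. First it splits $D_n$ and proves $F(x_4,a,b)>0$, checking the factor $a^2-4b$ by substituting $x_4$ directly. Then it proves only $-\tfrac12F+G\ge0$. Here the squared polynomial contains a factor $j(a,b)$ whose zero set really does cross $S$ as an arc. A two-dimensional analysis shows that $S\setminus Z_j$ has exactly two components:
$(0,0)$ is the only singular point and is isolated in $Z_j\cap\overline S$;
there are no closed components, because $j=j_a=0$ forces $b=0$;
and $Z_j$ meets $\partial S$ in exactly three points.
The sign is then checked at one point of each component. Positivity follows from $Fn+G\ge(n-\tfrac12)F>0$. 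Your ``no roots plus one sample point'' scheme cannot replace this component analysis.
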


Observe that it is enough to prove Proposition~\ref{prop:P} for metrics in $\mathcal B_P\setminus (\mathcal B_H\cup\mathcal B_V)$, since the metrics in $\mathcal B_H$ or $\mathcal B_V$ have already been considered in Propositions~\ref{prop:H} and \ref{prop:V}, respectively. With this in mind, Proposition~\ref{prop:P} will ultimately follow from Proposition~\ref{prop:Pprime}. 

The study of the set $\mathcal B_P$ is significantly more complicated than that of $\mathcal B_H$ or even $\mathcal B_V$ from Sections~\ref{sec:H} and \ref{sec:V}. The main results of this section are Proposition~\ref{prop:possible_x4} and Corollary~\ref{COR:BP_minus_BH}, where we give an explicit set containing $\mathcal B_P\setminus (\mathcal B_H \cup \mathcal B_V)$. This larger set will be called $\hat S$ in Section~\ref{SEC:derivative_of_P1}.

In order to study $\mathcal B_P$, it is convenient to switch  from the variables $(y,z)$, which belong to $(0,4/3]^2$ by Remark~\ref{REM:square_0_43}, to the corresponding variables $(a,b)=(x+y,xy)$ belonging to the region $T$ from Proposition~\ref{prop:feas1}. Nonetheless, we continue to write the metrics $g_{x,y,z,1}$ in terms of the variables $y, z$. From Lemma~\ref{lem:convertab} we know that
$$
P_1 = \sqrt{H_1 V_1} +b - 3\vert b-a+x\vert.
$$
We consider three cases depending on the sign of the expression inside the absolute value:
$$
b-a+x=0,\qquad b-a+x>0,\qquad b-a+x<0.
$$
We begin by showing that the first case is impossible.

\begin{lemma}\label{LEM:absvaluenonzero}
There are no metrics of the form $g_{x,y,z,1}$ in $\mathcal B_P$ with $b-a+x=0$.
\end{lemma}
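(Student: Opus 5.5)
The argument should be a direct evaluation of $P_1$ on the hyperplane $b-a+x=0$, with no Ricci flow computation at all. Suppose, for contradiction, that $g_{x,y,z,1}\in\mathcal B_P$ and that $b-a+x=0$, that is, $yz-y-z+x=0$.

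First, since $g_{x,y,z,1}\in\partial\mathcal P$, the strategy of proof for Theorem~\ref{THM:4-param} tells us that all nine functions $H_i,V_i,P_i$ are non-negative at $(x,y,z)$. In particular $H_1\geq 0$ and $V_1\geq 0$, so $\sqrt{H_1V_1}$ is a well-defined non-negative real number. Next, by Lemma~\ref{lem:convertab}, we have $P_1=\sqrt{H_1V_1}+b-3|b-a+x|$. Under the assumption $b-a+x=0$ this reduces to
$$
P_1=\sqrt{H_1V_1}+b\geq b=yz.
$$

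Finally, $(y,z)\in\left(0,\tfrac43\right]^2$ by Remark~\ref{REM:square_0_43}, equivalently $(a,b)\in T$ with $b>0$ by Proposition~\ref{prop:feas1}, so $yz>0$. Hence $P_1>0$, which contradicts the defining condition $P_1(x,y,z)=0$ of $\mathcal B_P$. Therefore no metric in $\mathcal B_P$ satisfies $b-a+x=0$.

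I do not expect a genuine obstacle. The only point needing care is justifying that the square root is real, which follows from non-negativity of $H_1$ and $V_1$ on the closure of $\mathcal P$, and noting that $b>0$ strictly because $T$ excludes the axis $b=0$.
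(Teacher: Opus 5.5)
Your proof is correct and follows the same route as the paper: substituting $b-a+x=0$ turns $P_1=0$ into $\sqrt{H_1V_1}+b=0$, which is impossible because $\sqrt{H_1V_1}\geq 0$ and $b=yz>0$. Your extra remarks, that the square root is real because $H_1,V_1\geq 0$ on $\partial\mathcal P$ and that $b>0$ strictly, only spell out what the paper leaves implicit.
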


\begin{proof}
Assume for a contradiction that $b-a+x = 0$. Substituting this into the equation $P_1 = 0$, we find $\sqrt{H_1 V_1} + b = 0$. This is a contradiction as the left hand side is positive, being a sum of a non-negative term and a positive term.
\end{proof}

The cases $b-a+x>0$ and $b-a+x<0$ are significantly more complicated and shall be considered in Subsections~\ref{SUBSEC:bax_pos} and \ref{SUBSEC:bax_neg}, respectively.

\subsection{\texorpdfstring{The case where $b-a+x>0$}{The case where b-a+x>0}}\label{SUBSEC:bax_pos}

The main result of this subsection is the following.

\begin{lemma}\label{LEM:no_bax_positive}
There are no metrics of the form $g_{x,y,z,1}$ in $\mathcal B_P\setminus(\mathcal B_H\cup \mathcal B_V)$ with $b-a+x>0$. 
\end{lemma}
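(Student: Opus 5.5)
The plan is to prove that $P_1>0$ at every point of the relevant region, by turning the condition $P_1=0$ into a polynomial equation and showing that this equation has no solution there.

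\textbf{Setting up the polynomial.} Suppose $g_{x,y,z,1}\in\mathcal B_P\setminus(\mathcal B_H\cup\mathcal B_V)$ and $b-a+x>0$. Then $H_1>0$ and $V_1>0$ strictly. By Remark~\ref{REM:square_0_43} and Proposition~\ref{prop:feas1}, $x\in(0,\frac43)$ and $(a,b)\in T$. By Lemma~\ref{lem:convertab}, the equation $P_1=0$ reads
$$
\sqrt{H_1V_1}=3(b-a+x)-b=2b-3a+3x .
$$
The left side is strictly positive, so we need $2b-3a+3x>0$. If instead $2b-3a+3x\le 0$, we already have a contradiction. Otherwise we square both sides and multiply by $x>0$. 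The condition becomes $Q(x,a,b)=0$, where
$$
Q(x,a,b)=(4-3x)\left(a^2-4b+2ax-3x^2\right)-x\left(2b-3a+3x\right)^2 .
$$
This is a cubic polynomial in $x$ with coefficients in $\mathbb Z[a,b]$.

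\textbf{Reducing to a sign check.} It suffices to show that $Q\neq 0$ on the semialgebraic set
$$
R=\Bigl\{(x,a,b)\;:\;(a,b)\in T,\ \max\!\bigl(a-b,\tfrac{3a-2b}{3}\bigr)<x<\tfrac43,\ a^2-4b+2ax-3x^2>0\Bigr\}.
$$
I expect in fact that $Q>0$ on $R$, i.e. $\sqrt{H_1V_1}>2b-3a+3x$. Since $R$ is a union of $x$-intervals over $(a,b)$, the first step is to fix $(a,b)\in T$ and study the cubic $x\mapsto Q(x,a,b)$ on the admissible interval.

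\textbf{Endpoint values.} I would evaluate $Q$ at the endpoints of that interval.
\begin{itemize}
\item At $x=\frac43$, $Q=-\frac43(2b-3a+4)^2\le 0$.
\item At $x=x_+$ (the root of $V_1$ from \eqref{eq:x_plus}), we have $Q=-x_+(\cdots)^2\le 0$.
\item At $x=a-b$, the last factor equals $(-b)^2$, so $Q=x\bigl((4-3x)V_1-b^2\bigr)$. Its sign depends on $(a,b)$.
\end{itemize}
Because the upper endpoints give $Q\le 0$, it is not automatic that $Q>0$ throughout. So I would also use the remaining constraints of $\partial\mathcal P$: $P_2,P_3\ge 0$, $V_2,V_3\ge0$, and $H_2,H_3\ge 0$. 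These cut the $x$-interval down further. If they are needed, the cleanest route is to keep only $V_2,V_3\ge 0$, which are polynomial and symmetric after multiplying out.

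\textbf{Root-free argument via resultants and Sturm sequences.} To show that the zero set $\{Q=0\}$ misses $R$, I would use the tools of Section~\ref{SEC:resultant_Sturm}. First, compute the discriminant $\operatorname{res}_x(Q,\partial_xQ)$ and the resultants of $Q$ with each boundary polynomial of $R$, namely $x-a+b$, $3x-3a+2b$ and $a^2-4b+2ax-3x^2$. Eliminating $x$ in this way gives finitely many curves in the $(a,b)$-plane. Next, use Sturm sequences to check that these curves meet $T$ only in controlled places. Away from these curves, the number of roots of $Q$ in the admissible $x$-interval is locally constant. It can then be determined at one sample point in each connected component, where we verify that there are no roots.

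\textbf{Expected main obstacle.} The hard part is this bookkeeping in three variables. If the pure $Q$-analysis fails, I would first try adding the inequality $V_2V_3\ge0$, rewritten in $(x,a,b)$ after symmetrizing, and then redo the elimination.
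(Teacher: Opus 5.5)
\textbf{The gap.} Your proposal is a plan, not yet a proof. The decisive step, that $\{Q=0\}$ misses $R$, is left to an unspecified resultant/Sturm/sample-point computation, and the structural guesses meant to guide that computation are wrong in ways that matter.

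First, $Q$ is not cubic. The $9x^3$ terms cancel, and $Q=4(Ax^2+Bx+C)$ with $A=3(a-b-1)$, $B=-3a^2+(3b+2)a-b^2+3b$, $C=a^2-4b$. The paper exploits exactly this: it writes the two roots $x_1,x_2$ explicitly and shows, by two-variable sign arguments on $T$, that $x_1\notin(0,\tfrac{4}{3})$ and $2b-3a+3x_2<0$.

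Second, you misidentify the lower end of your $x$-interval. Since $b-a+x=\tfrac{1}{3}(2b-3a+3x)+\tfrac{b}{3}$, the condition $2b-3a+3x>0$ already implies $b-a+x>0$. So the slice of $R$ over $(a,b)$ is $x_L<x<x_U$ with $x_L=a-\tfrac{2b}{3}$ and $x_U=\min\{\tfrac{4}{3},x_+\}$. The value of $Q$ at $x=a-b$ plays no role.

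Third, your expectation that $Q>0$ on $R$ contradicts your own correct computation that $Q\le 0$ at the upper end.

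\textbf{The missing idea.} $R$ is empty, and your endpoint evaluation, done at the right endpoint, shows this is forced. We have $Q(x_L)=x_L(4-3x_L)V_1(x_L)$, which is positive whenever the slice is nonempty (note $x_L>0$ and $x_-\le 0$ on $T$). We also have $Q(x_U)=-9x_U(x_U-x_L)^2<0$. So on any nonempty slice $Q$ must vanish, and showing $Q\neq 0$ on $R$ is the same as showing $R=\emptyset$. No root count over a nonempty region can succeed, and the extra constraints $P_2,P_3,V_2,V_3\ge 0$ are beside the point.

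Emptiness is elementary:
\begin{itemize}
\item $x_L\ge\tfrac{4}{3}$ if and only if $3a-2b\ge 4$.
\item Because $b\le a^2/4<a$ on $T$, $x_L\ge x_+$ if and only if $a-b\ge\sqrt{a^2-3b}$, i.e.\ $b(b-2a+3)\ge 0$, i.e.\ $b\ge 2a-3$.
\end{itemize}
On $T$ one of these always holds. If $a>2$, then $3a-2b\ge 3a-\tfrac{a^2}{2}>4$ because $(a-2)(a-4)<0$. If $a\le 2$ and $b<2a-3$, then $3a-2b>6-a\ge 4$.

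Hence $2b-3a+3x>0$ together with $x<\tfrac{4}{3}$ forces $x>x_L\ge x_+$, so $V_1<0$, a contradiction. With this observation your endpoint approach becomes a complete proof, much shorter than the paper's explicit-root analysis (which needs $\Delta\ge 0$ on $T$ and the regions $U_1,U_2,U_3$). Without it, what you have is a plan rather than an argument.
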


Lemma~\ref{LEM:no_bax_positive} will follow from Lemmas~\ref{LEM:A1zero} and \ref{LEM:A1nonZero} below. Before stating them, we need to collect some observations and introduce some notation.

The assumption $b-a+x>0$ implies that the equation $P_1=0$ is equivalent to $\sqrt{H_1 V_1} = 2b-3a+3x$, so there are solutions only if $2b-3a+3x\geq 0$.  However, since $H_1$ and $V_1$ are non-zero for any $g_{x,y,z,1}\in \mathcal{B}_P\setminus (\mathcal{B}_H\cup\mathcal{B}_V)$, we conclude that for such a metric:
\begin{equation}\label{EQ:eq_on_xyz_ab}
    2b-3a+3x  >  0.
 \end{equation}
Under the assumption in~\eqref{EQ:eq_on_xyz_ab}, the equation $\sqrt{H_1 V_1} = 2b-3a+3x$  implies  $H_1 V_1 = (2b-3a+3x)^2$. Expanding and rearranging this equation we find that it is equivalent to the following:
\begin{equation}\label{EQ:Quad_P=0}
3 \left(a-b-1\right) x^{2}+\left(-3 a^{2}+\left(3 b+2\right) a-b^{2}+3 b\right) x+a^{2}-4 b=0.
\end{equation}
The left-hand side of \eqref{EQ:Quad_P=0} is a polynomial $Ax^2+Bx+C$, where:
\begin{align*}
A&:=3 \left(a-b-1\right), \\
B&:=-3 a^{2}+\left(3 b+2\right) a-b^{2}+3 b,\\
C&:=a^{2}-4 b.
\end{align*}
We summarize the previous discussion in the following statement, for later reference.

\begin{lemma}\label{LEM:conditions_P1_0_case1}
    A metric $g_{x,y,z,1}$  with $b-a+x>0$ belongs to $\mathcal{B}_P\setminus (\mathcal{B}_H\cup \mathcal{B}_V)$ if and only if the following two conditions are satisfied:
    \begin{itemize}
        \item $2b-3a+3x  > 0$, and 
        \item $Ax^2+Bx+C=0$.
    \end{itemize}
\end{lemma}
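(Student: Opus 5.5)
We prove the two implications separately. Throughout, $g_{x,y,z,1}$ is a metric with $b-a+x>0$. By Remark~\ref{REM:square_0_43} we may assume $(x,y,z)\in(0,\frac43]^3$, so $(a,b)\in T$ by Proposition~\ref{prop:feas1}. By Lemma~\ref{lem:convertab}, $P_1=\sqrt{H_1V_1}+b-3(b-a+x)$, and hence
$$P_1=0 \iff \sqrt{H_1V_1}=2b-3a+3x.$$

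For the forward direction, let $g\in\mathcal B_P\setminus(\mathcal B_H\cup\mathcal B_V)$. Then $H_1,V_1\geq 0$ since $g$ lies in the closure of $\mathcal P$. Also $H_1\neq 0$ and $V_1\neq 0$, because $g\notin\mathcal B_H\cup\mathcal B_V$. Hence $H_1V_1>0$, and so $2b-3a+3x=\sqrt{H_1V_1}>0$. Squaring gives $H_1V_1=(2b-3a+3x)^2$. Multiply by $x>0$ and substitute $H_1=4-3x$ and $xV_1=a^2-4b+2xa-3x^2$. The resulting identity is
$$(4-3x)(a^2-4b+2ax-3x^2)-x(2b-3a+3x)^2=0.$$
Expanding, the cubic terms $9x^3$ cancel. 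The remaining coefficients in $x^2$, $x$ and $1$ should be $4A$, $4B$ and $4C$, so the identity is $4(Ax^2+Bx+C)=0$. This expansion is the one routine computation to verify carefully.

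For the converse, assume $2b-3a+3x>0$ and $Ax^2+Bx+C=0$. The algebra above, run in reverse, gives $H_1V_1=(2b-3a+3x)^2>0$. Taking the positive square root, which is legitimate because $2b-3a+3x>0$, yields $P_1=0$. The product $H_1V_1$ is then nonzero, so neither $H_1$ nor $V_1$ vanishes and $g\notin\mathcal B_H\cup\mathcal B_V$.

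The delicate point is concluding $g\in\mathcal B_P$, which requires $g\in\partial\mathcal P$. This needs the remaining seven functions $H_i,V_i,P_i$ to be non-negative, and the two conditions do not control $H_2,H_3,V_2,V_3,P_2,P_3$ or even the signs of $H_1$ and $V_1$. Read literally, the converse is therefore too strong. We read the ``if'' direction as intended within the standing setting of this section: $g$ is already assumed to lie in the closure of $\mathcal P$, so the two conditions only need to certify $P_1=0$ and $g\notin\mathcal B_H\cup\mathcal B_V$. This is all the later argument uses, since the lemma is only applied to locate a set \emph{containing} $\mathcal B_P\setminus(\mathcal B_H\cup\mathcal B_V)$. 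We would state this interpretation explicitly, so that the logical content is precisely the forward implication plus the equivalence $P_1=0\iff$ (the two conditions) for metrics in the closure of $\mathcal P$ with $H_1V_1\neq0$.
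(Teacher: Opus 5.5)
Your proof is correct and is the paper's argument: for $b-a+x>0$ you rewrite $P_1=0$ as $\sqrt{H_1V_1}=2b-3a+3x$, get strict positivity from $H_1V_1\neq 0$, and clear denominators, and the expansion you flag does hold, namely $(4-3x)(a^2-4b+2ax-3x^2)-x(2b-3a+3x)^2=4(Ax^2+Bx+C)$. Your caveat about the ``if'' direction is also justified: for example, $(x,a,b)=(\tfrac32,\tfrac32,\tfrac13)$ satisfies $b-a+x>0$ and both conditions but has $H_1<0$, and the paper's discussion establishes, and later uses, only the forward implication.
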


In order to analyze the solutions of $Ax^2+Bx+C=0$, we divide our arguments into two cases: $A=0$ and $A\neq 0$, and show that neither of these cases can occur.

\begin{lemma}\label{LEM:A1zero}
There are no metrics of the form $g_{x,y,z,1}$ in $\mathcal B_P\setminus(\mathcal B_H  \cup \mathcal{B}_V)$ with $(a,b)\in T\cap\{A=0\}$ and $b-a+x>0$.
\end{lemma}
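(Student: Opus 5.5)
The plan is to substitute the constraint $A=0$ directly into the conditions of Lemma~\ref{LEM:conditions_P1_0_case1} and show that they force either a contradiction with Remark~\ref{REM:square_0_43} or the value $x=1$, which is incompatible with $b-a+x>0$. Everything reduces to short polynomial identities in $b$ alone, so no resultants or Sturm sequences should be needed.

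Concretely, $A=0$ means $a=b+1$. I will substitute this into $B$ and $C$. A direct expansion gives
\begin{align*}
B&=-3(b+1)^2+(3b+2)(b+1)-b^2+3b=-(b-1)^2,\\
C&=(b+1)^2-4b=(b-1)^2.
\end{align*}
The equation $Ax^2+Bx+C=0$ therefore becomes $(b-1)^2(1-x)=0$. On the other hand, the hypothesis $b-a+x>0$ reads $x-1>0$. The argument then splits into two cases.

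\emph{Case $b\neq 1$.} The equation forces $x=1$, contradicting $x>1$.

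\emph{Case $b=1$.} Here $(a,b)=(2,1)$, which lies on the boundary curve $b=a^2/4$ of $T$ and corresponds to $y=z=1$. The quadratic equation is now vacuous, so I use the other condition of Lemma~\ref{LEM:conditions_P1_0_case1}. The inequality $2b-3a+3x>0$ becomes $2-6+3x>0$, i.e.\ $x>\frac{4}{3}$. This contradicts Remark~\ref{REM:square_0_43}, which gives $x\le \frac{4}{3}$. As a sanity check I would also verify directly that at $y=z=1$ one has $V_1=H_1=4-3x$ and, for $x>1$, $P_1=8-6x$. This vanishes only at $x=\frac43$, i.e.\ only in $\mathcal B_H$, consistent with the exclusion.

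The only step needing care is the algebraic simplification of $B$ and $C$ on the line $a=b+1$, and remembering that the degenerate point $b=1$ must be handled with the strict inequality \eqref{EQ:eq_on_xyz_ab} rather than with the quadratic. I do not expect a genuine obstacle: once the factorizations above are confirmed, both cases close in one line each.
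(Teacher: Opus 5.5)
Your proof is correct and matches the paper's argument. The paper splits into $B=0$ versus $B\neq 0$, which on the line $a=b+1$ is exactly your split $b=1$ versus $b\neq 1$ because $B=-(b-1)^2$. It closes both cases the same way you do: at $(2,1)$, \eqref{EQ:eq_on_xyz_ab} forces $x\geq\frac43$; otherwise $x=1$, which contradicts $b-a+x>0$. Your explicit factorizations $B=-(b-1)^2$ and $C=(b-1)^2$ simply make the paper's terse substitution step transparent.
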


\begin{proof}
   We argue by contradiction, so suppose there exists such a metric. By Lemma~\ref{LEM:conditions_P1_0_case1}, the parameter $x$ must satisfy $Bx+C=0$. We distinguish two cases: $B=0$ and $B\neq 0$. If $B=0$, then $C=0$. Substituting $b=a-1$ (which corresponds to $A=0$) into $C=0$ we find that $a=2$, and consequently $b=1$. Using~\eqref{EQ:eq_on_xyz_ab} we find that $x\geq\frac{4}{3}$ (or equivalently $H_1\leq 0$), which yields a contradiction because any metric in $\mathcal B_P\setminus(\mathcal B_H\cup\mathcal B_V)$ must satisfy $H_1 > 0$.

    If $B\neq 0$, then $x=-\frac{C}{B}$. Substituting $b=a-1$ into this expression we obtain $x=1$. However, for $x=1$ we find that $b-a+x=0$, which is a contradiction with the assumption $b-a+x>0$.
\end{proof}

Now we assume $A\neq 0$, so the solutions of \eqref{EQ:Quad_P=0} correspond to the roots of a quadratic polynomial in the variable $x$ which has solutions:
$$
x_1=\frac{-B+\sqrt{\Delta}}{2A},\qquad x_2=\frac{-B-\sqrt{\Delta}}{2A},
$$
where $\Delta:=B^2-4AC$. We now show that both formulas define continuous functions on $T\cap \{A\neq 0\}$.

\begin{lemma}\label{LEM:x1_x42_continuous_on_T-A}
For any $(a,b)\in T$ it holds that $\Delta \geq 0$. In particular, the formulas for $x_1$ and $x_2$ define continuous functions on $T\cap \{A\neq 0\}$.
\end{lemma}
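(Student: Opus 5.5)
The plan is to prove $\Delta\ge 0$ on $T$ by a sign argument that avoids the full region. The continuity claim then follows immediately: if $\Delta\ge 0$, then $\sqrt{\Delta}$ is continuous on $T$, and on $T\cap\{A\neq 0\}$ the formulas for $x_1,x_2$ are quotients of continuous functions by the nonvanishing function $2A$.

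\textbf{Reduction by the sign of $AC$.} First I note that $C=a^2-4b\ge 0$ on all of $T$, since $b\le \frac{a^2}{4}$ by Proposition~\ref{prop:feas1}. Hence wherever $A\le 0$ we have $-4AC\ge 0$, so $\Delta=B^2-4AC\ge B^2\ge 0$ holds trivially. It therefore suffices to treat the subregion
$$T^+:=T\cap\{A>0\}=T\cap\{b<a-1\}.$$
This is a bounded semialgebraic region. Its boundary consists of pieces of the line $b=a-1$, the parabola $b=\frac{a^2}{4}$, and the line $b=\frac43\left(a-\frac43\right)$. Using the lower boundary of $T$, I expect $T^+$ to sit roughly over $a\in(1,\frac73)$. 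It touches the parabola only at the tangency point $(2,1)$, where $A=B=C=0$ and so $\Delta=0$.

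\textbf{Showing $\Delta\ge 0$ on $T^+$.} I will expand $\Delta$ as a polynomial in $\mathbb{Z}[a,b]$ of total degree at most $4$ and first ask Maple to factor it. If $\Delta$ factors, for instance with a square factor or a factor like $(a^2-4b)$ or $(a-b-1)$ that visibly vanishes at $(2,1)$, I will read off its sign on $T^+$ factor by factor. If $\Delta$ does not factor, I will use the same strategy as in Lemma~\ref{LEM:FH_positive} and Lemma~\ref{LEM:no_x_minus}, namely show that the curve $\{\Delta=0\}$ does not enter the interior of $T^+$.

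\begin{itemize}
\item On each boundary piece of $T^+$, I substitute the parametrization to obtain a one-variable polynomial in $a$. I then count its roots on the relevant $a$-interval using Sturm's theorem (Theorem~\ref{T:Sturm_root}, applied as in Remark~\ref{REM:Sturm}). On the line $b=a-1$ we have $A=0$, so $\Delta=B^2\ge 0$ there automatically.
\item For interior tangencies of $\{\Delta=0\}$, I compute $\operatorname{res}_b\!\left(\Delta,\partial_b\Delta\right)$, whose roots are the $a$-values of vertical tangencies and singular points of the curve, and locate its real roots in the relevant $a$-range by Sturm sequences. This excludes, or pins down, any component of $\{\Delta=0\}$ inside $T^+$.
\item Finally, I check a single sample point of $T^+$, such as $(a,b)=(2,\frac34)$, where $\Delta>0$. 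Connectedness then gives $\Delta>0$ on the interior of $T^+$, and $\Delta\ge 0$ on its closure.
\end{itemize}

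\textbf{Main obstacle.} The hard part is the degenerate corner $(2,1)$. There $\Delta$ vanishes, and the zero curve of $\Delta$ may pass through this point tangentially to the parabola. If so, the root counts of the boundary restrictions will include a root at $a=2$, possibly a multiple one, which must be handled carefully rather than counted as a crossing into $T^+$. The first thing I will try is a local expansion of $\Delta$ in $u=a-2$ and $v=b-1$, checking that its lowest-order homogeneous part is nonnegative on the tangent cone of $T^+$ at $(2,1)$. Failing that, I will restrict $\Delta$ to the lines $b-1=m(a-2)$ through $(2,1)$ and show, via Sturm's theorem in the slope $m$, that the quotient $\Delta/(a-2)^k$ has no sign change for slopes pointing into $T^+$.
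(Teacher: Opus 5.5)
Your reduction is correct and worthwhile. Since $C=a^2-4b\ge 0$ on $T$, you get $\Delta\ge B^2\ge 0$ wherever $A\le 0$. The paper uses this same observation later, for $x_1$ on $U_2,U_3$ and for $\Gamma$, but not here. The gap is in your treatment of $T^+$.

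\textbf{Why the listed checks do not suffice.} ($\Delta$ is irreducible, so only your zero-curve branch is relevant.) Your checks are edge root counts, roots of $\operatorname{res}_b(\Delta,\partial_b\Delta)$, and one sample point. None of them can exclude an arc of $\{\Delta=0\}$ that enters the interior of $T^+$ from a point of $\partial T^+$ where $\Delta$ already vanishes. For instance, an arc from $(\frac43,0)$ to $(2,1)$ that is a graph over $a$ creates no new edge roots and no vertical tangency. So everything rests on local analysis at the edge zeros, and that part fails in two ways.

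\textbf{A missed edge zero.} The edges of $\overline{T^+}$ are pieces of $b=a-1$ ($1\le a\le\frac73$), $b=0$ ($1\le a\le\frac43$) and $b=\frac43(a-\frac43)$ ($\frac43\le a\le\frac73$); the parabola is not an edge. Since $\Delta(a,0)=a^2(3a-4)^2$, $\Delta$ vanishes at the corner $(\frac43,0)$, which is also a critical point of $\Delta$. This point is harmless, because the Hessian there is $\left(\begin{smallmatrix}32&-20\\-20&38/3\end{smallmatrix}\right)$, which is positive definite. But you do not address it.

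\textbf{An inconclusive test at $(2,1)$.} With $u=a-2$, $v=b-1$, the lowest-order part of $\Delta$ is $(u-v)^2$. This vanishes along the edge direction $u=v$ of the tangent cone (the line $A=0$). So it says nothing about points of $T^+$ approaching $(2,1)$ tangentially to $b=a-1$, where quartic terms dominate. Your slope-by-slope fallback is not uniform as $m\to1$, where the leading coefficient $(1-m)^2$ vanishes. A one-variable Sturm count in $m$ does not by itself yield a neighbourhood of $(2,1)$.

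\textbf{Sample point.} $(2,\frac34)\notin T$, since $T$ requires $b\ge\frac89$ at $a=2$. The point $(\frac32,\frac13)$, where $\Delta=\frac{25}{1296}$, works instead.

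\textbf{First repair: exact local expansion.} Put $w=a-b-1$. Then $A=3w$, $C=u^2+4w$, $B=-7w-u^2-uw-w^2$, and exactly
\[
\Delta=w^2(1+14u)+2u^2w+14w^3+(u^2+uw+w^2)^2,
\]
which is positive whenever $w>0$ and $|u|<\frac{1}{14}$. Together with the Hessian at $(\frac43,0)$, this completes your local analysis.

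\textbf{Second repair: the paper's extremal argument.} This is simpler because it makes corners irrelevant. $\overline{T^+}$ is compact. The edge restrictions are
\[
(a-2)^4,\qquad a^2(3a-4)^2,\qquad \frac{1}{6561}(441a^2-2040a+2368)(3a-4)^2,
\]
all of which are $\ge 0$. An interior minimum must be a critical point. Using $\operatorname{res}_a(\Delta_a,\Delta_b)$ and $\operatorname{res}_b(\Delta_a,\Delta_b)$, the only interior candidate is $\bigl(\frac{17}{6}-\frac{\sqrt{465}}{18},\frac{23}{3}-\frac{\sqrt{465}}{3}\bigr)$, where $\Delta\approx 0.005>0$.

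\textbf{What the reduction buys.} That critical point lies in $T^+$, so your reduction only spares the paper the parabola edge. The critical-point computation is the real content of the lemma. The tiny value $0.005$ shows that $\Delta$ comes very close to $0$ inside $T^+$, so this computation cannot be skipped.
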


\begin{proof}
We will show that the absolute minimum of $\Delta$ on the closure of $T$ is $0$.  This absolute minimum must either occur on the boundary of $T$ or it must be a critical point. We begin by analyzing the behavior of $\Delta$ on the boundary of $T$.

Along the boundary curve $b=\frac{a^2}{4}$, we find $\Delta\left(a,\frac{a^2}{4}\right) =  \frac{1}{256} a^2 (a-8)^2(a-2)^4$, which is obviously non-negative.  Along the boundary curve $b=0$, we find $\Delta(a,0) = a^2(-4 + 3a)^2$ which is also obviously non-negative.  Along the boundary curve $b = \frac{4}{3}\left(a-\frac{4}{3}\right)$, we find $\Delta\left(a, \frac{4}{3}\left(a-\frac{4}{3}\right)\right) =\frac{\left(441 a^{2}-2040 a+2368\right) \left(-4+3 a\right)^{2}}{6561}$. Since the factor $(441 a^{2}-2040 a+2368)$ has no real roots, we conclude that $\Delta \geq 0$ on this boundary curve as well.

Having shown that $\Delta \geq 0$ on the boundary of $T$, we now turn our attention towards interior critical points.  We will show that $(a,b)=\left( \frac{17}{6} - \frac{\sqrt{465}}{18}, \frac{23}{3}-\frac{\sqrt{465}}{3}\right)$ is the  only possible critical point in the interior of $T$. Believing this momentarily, evaluating $\Delta$ at this point we get 
$$
\frac{17825\sqrt{465}}{162} - \frac{128125}{54} = \frac{17825 \sqrt{\frac{236391585}{508369}}}{162} - \frac{128125}{54} > \frac{17825 \sqrt{\frac{236390625}{508369}}}{162} - \frac{128125}{54} = 0,
$$ 
which implies $\Delta \geq 0$ on $T$, as claimed.

Now let us analyze the interior critical points of $\Delta$, i.e. the common zeros $(a_0,b_0)$ of the partial derivatives $\Delta_a$ and $\Delta_b$. Recall from Section \ref{SEC:resultant_Sturm} that both of the resultants $\operatorname{res}_a(\Delta_a,\Delta_b)\in\mathbb Z[b]$ and $\operatorname{res}_b(\Delta_a,\Delta_b)\in\mathbb Z[a]$ must vanish at $(a_0,b_0)$. Computing $\Delta_a$ and $\Delta_b$ and the corresponding resultants one finds that:
$$
\operatorname{res}_b:=\operatorname{res}_b(\Delta_a,\Delta_b) = -2304(a-2)^4(3a-4)(27a^2-153a+178)
$$ 
and 
$$
\operatorname{res}_a:=\operatorname{res}_a(\Delta_a,\Delta_b) = 62208b(b-1)^4(9b^2 - 138b + 64).
$$  
The roots of $\operatorname{res}_b$ are $2,\frac{4}{3}, \frac{17}{6} - \frac{\sqrt{465}}{18}$, and $\frac{17}{6} + \frac{\sqrt{465}}{18}$, but the last root is larger than $\frac{16}{6} = \frac{8}{3}$, so lies outside of $T$.  Since $b > 0$ on the interior of $T$, the relevant roots of $\operatorname{res}_a$ are $1, \frac{23}{3} - \frac{\sqrt{465}}{3},$ and $\frac{23}{3} + \frac{\sqrt{465}}{3}$, where, similarly, the last root is larger than $ \frac{16}{9}$, so lies out of $T$. In summary, any interior critical point $(a_0,b_0)$ must have 
\begin{equation}\label{EQ:possibilities_for_a_b}
    a_0\in\left\{2,\tfrac{4}{3}, \tfrac{17}{6} - \tfrac{\sqrt{465}}{18}\right\}\qquad\text{and}\qquad b_0\in \left\{1, \tfrac{23}{3} - \tfrac{\sqrt{465}}{3}\right\}.
\end{equation}
Assume initially that $a_0=2$.  Then $\Delta_a(2,b) = -2(b-1)(3b^2-34b+32)$, whose roots are $b\in\{1, \frac{17}{3} \pm \frac{\sqrt{193}}{3}\}$. By comparing them to the possibilities in \eqref{EQ:possibilities_for_a_b}, we find $b_0=1$.  But $(a_0,b_0)=(2,1)$ is not in the interior of $T$, so $a_0=2$ yields no interior critical points.

Next, assume that $a_0= \frac{4}{3}$.  We have $\Delta_b(\frac{4}{3},b) = \frac{2}{3} b (6b^2 -63b + 19)$, with roots $b\in \{0, \frac{21}{4} \pm \frac{\sqrt{3513}}{12}\}$. None of them match with the possibilities in \eqref{EQ:possibilities_for_a_b}, so $a_0=\frac{4}{3}$ also yields no interior critical points. It follows that any interior critical point $(a_0,b_0)$ must have $a_0 = \frac{17}{6} - \frac{\sqrt{465}}{18}$.

Now, we rule out  the possibility that $b_0=1$, and thus conclude that the only possible interior critical point is $(a_0,b_0) = \left(\frac{17}{6} - \frac{\sqrt{465}}{18}, \frac{23}{3}-\frac{\sqrt{465}}{3}\right).$  We have $\Delta_a(a,1) = 2(a-2)(18a^2-27a-17)$, with roots $a\in\{2, \frac{3}{4} \pm \frac{\sqrt{217}}{12}\}$.  Since neither of these is equal to $\frac{17}{6} - \frac{\sqrt{465}}{18}$, we deduce that $b_0\neq 1$.  
\end{proof}

\begin{remark}\label{rem:proc}

In the following lemma and elsewhere, we will need to understand  the zero set of an equation of the form $P\sqrt{Q} + R = 0$ where $P,Q,$ and $R$ are polynomial expressions in $a$ and $b$. We follow the standard squaring procedure: isolating the square root and squaring both sides to obtain $P^2 Q - R^2 = 0$. The expression $P^2 Q - R^2$ often factors, allowing us to analyze the zero set of each factor separately.

While this procedure is straightforward, when $P$, $Q$, and $R$ are complicated, the resulting formulas can get very complicated.  As such, we frequently employed the use of Maple to carry out the computations.  We encourage the interested reader to use their favorite Computer Algebra System.

Finally, we caution that moving from $P\sqrt{Q} = -R$ to $P^2Q = R^2$ has the potential to create extraneous solutions, i.e., solutions of the equation $P^2 Q - R^2 = 0$ which are not solutions of the original equation. The solutions to the equation $P\sqrt{Q} + R = 0$ are a subset of the solutions we find.  
\end{remark}

\bigskip

\begin{lemma}\label{LEM:A1nonZero}
There are no metrics of the form $g_{x,y,z,1}$ in $\mathcal B_P\setminus\mathcal (B_H\cup \mathcal{B}_V)$ with $(a,b)\in T\cap\{A\neq 0\}$ and $b-a+x>0$.
\end{lemma}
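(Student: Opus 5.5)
By Lemma~\ref{LEM:conditions_P1_0_case1}, a metric of the kind excluded by the statement has $x\in\{x_1,x_2\}$ together with the strict inequalities $2b-3a+3x>0$ and $b-a+x>0$. Remark~\ref{REM:square_0_43} and $H_1>0$ give in addition $0<x<\frac43$, with $(a,b)\in T\cap\{A\neq 0\}$. Lemma~\ref{LEM:x1_x42_continuous_on_T-A} makes $x_1$ and $x_2$ continuous on $T\cap\{A\neq0\}$. For each root $x_i$ I will therefore look at the function
$$
G_i(a,b):=2b-3a+3x_i(a,b),
$$
or, if more convenient, at $b-a+x_i$ or $x_i-\frac43$, and show that it has the wrong sign everywhere on $T\cap\{A\neq 0\}$.

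The set $T\cap\{A\neq0\}$ is split by the line $b=a-1$ into two connected pieces. One is $T_+=T\cap\{a-b-1>0\}$; the other, $T_-=T\cap\{a-b-1<0\}$, contains the corner near the origin. On each piece the sign of every $G_i$ (and of the other candidate functions) is constant away from its zero set. My plan is to determine the zero sets exactly by the squaring procedure of Remark~\ref{rem:proc}. Clearing the denominator $2A$, the equation $G_i=0$ becomes
$$
\pm 3\sqrt{\Delta}+\bigl(2(2b-3a)A-3B\bigr)=0 .
$$
Squaring gives the polynomial equation $9\Delta-\bigl(2(2b-3a)A-3B\bigr)^2=0$. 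I expect this to factor, and I will locate the real zeros of each factor inside $T$ using resultants with the boundary curves and Sturm sequences (Theorem~\ref{T:Sturm_root}). I anticipate that the zero set meets $T$ only along $\partial T$, along $A=0$, or in curves where one of the other conditions fails, for instance $x_i\geq\frac43$ or $b-a+x_i\le 0$. Once that is established, evaluating each $G_i$ at one sample point of each piece, as in the proofs of Lemmas~\ref{LEM:FH_positive} and \ref{LEM:no_x_minus}, gives its sign on that piece.

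The expected outcome is as follows. One root, probably $x_2$ on $T_+$, is always $\geq\frac43$ or nonpositive. The other root fails $2b-3a+3x>0$ or $b-a+x>0$. Where one inequality alone does not suffice, I will combine two conditions, for example by showing that $\{G_i>0\}\cap\{b-a+x_i>0\}\cap\{x_i<\frac43\}$ is empty. For this I will compare the zero curves of these functions using resultants and Sturm counts on the relevant $a$-intervals.

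The main obstacle will be the case analysis near the degenerate points. These are $(a,b)=(2,1)$, where $A=C=0$ and $\Delta$ vanishes to high order; the segment $b=a-1$, where $x_1$ or $x_2$ blows up; and the parts of $\partial T$ where $\Delta=0$. Near these points the sign arguments by connectedness break down, and the squared polynomials will have high degree with extraneous components. I would first attack this by substituting $x$ directly and eliminating. Concretely, I would treat the system $Ax^2+Bx+C=0$, $2b-3a+3x=t$ as polynomials in $x$ and compute $\operatorname{res}_x$, so that no square roots appear. Only then would I restrict to the specific boundary curves and run Sturm's theorem there.
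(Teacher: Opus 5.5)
Your overall strategy (reduce to $x\in\{x_1,x_2\}$, find zero sets with the squaring procedure of Remark~\ref{rem:proc}, then argue by connectedness and sample points) is the paper's. However, the decomposition it rests on is wrong. Since $\frac{a^2}{4}-(a-1)=\frac{(a-2)^2}{4}$, the line $A=0$ is tangent to the upper boundary of $T$ at $(2,1)$, and this is a point of $T$ with $A=0$. Hence your $T_-=T\cap\{A<0\}$ is not connected. It is the union of $U_2=T\cap\{A<0,\ a<2\}$, which contains the corner at the origin, and $U_3=T\cap\{A<0,\ a>2\}$, the tip near $(\frac83,\frac{16}{9})$. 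These two pieces meet only at the excluded point $(2,1)$. So $T\cap\{A\neq0\}$ has three components, and evaluating at ``one sample point of each piece'' does not fix any sign on $T_-$. You need a sample point in each of $T_+=U_1$, $U_2$ and $U_3$. You flagged $(2,1)$ as degenerate but missed exactly this.

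Conversely, the obstacles you single out are not real. By Lemma~\ref{LEM:x1_x42_continuous_on_T-A} both roots are continuous on each component, and $\sqrt{\Delta}$ stays continuous where $\Delta=0$. The blow-up along $A=0$ and the point $(2,1)$ lie outside the domain. So no local analysis, resultants or Sturm sequences are needed.

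The second gap is that the decisive computations are only announced (``I expect this to factor''), and your guess about which root does what is reversed. What actually happens, for $x_1$:
on $U_2\cup U_3$ one has $A<0\le C$, so $\sqrt{\Delta}\ge|B|$ and $x_1\le 0$ with no zero-set analysis at all. On $U_1$, $x_1=\frac43$ forces $16A+12B+9C=-3(3a-2b-4)^2=0$. This line cuts $U_1$ into two pieces, so you must sample both sides: $(\frac32,\frac13)$ and $(2,\frac{32}{35})$ give $x_1=\frac32$ and $x_1=\frac{10}{7}$. Hence $x_1\ge\frac43$, i.e.\ $H_1\le 0$, on $U_1$.

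For $x_2$, your squared equation is exactly $9\Delta-\bigl(2(2b-3a)A-3B\bigr)^2=12Ab(3a-2b-4)(2a-b-3)$. Both lines lie in $U_1$. On them $x_2=2-\frac a2$, resp.\ $x_2=1$, so $2b-3a+3x_2$ equals $-b$, resp.\ $a-3$, and both are negative. The zeros of the squared equation there belong to the $x_1$ branch, which is $\ge\frac43$ on these lines. Thus $2b-3a+3x_2$ never vanishes on $T\cap\{A\neq0\}$, and one sample point in each of $U_1$, $U_2$, $U_3$ shows it is negative. This excludes $x_2$ via Lemma~\ref{LEM:conditions_P1_0_case1}, and the extra condition $b-a+x>0$ you planned to combine is never needed.
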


\begin{proof}

Suppose there exists such a metric. By Lemma~\ref{LEM:conditions_P1_0_case1}, the value of $x$ must be $x_1$ or $x_2$. We will see that both cases yield a contradiction. Recall throughout the proof that both $x_1$ and $x_2$ are continuous functions on $T\cap\{A\neq 0\}$, see Lemma~\ref{LEM:x1_x42_continuous_on_T-A}. Note that $T\cap\{A\neq 0\}$ has three connected components, namely
$$
U_1:=T\cap\{A> 0\},\qquad U_2:=T\cap\{A< 0 \text{ and } a < 2\}, \qquad U_3:=T\cap\{A<0 \text{ and } a > 2\},
$$
see Figures~\ref{fig:TA0} and \ref{fig:TA0zoom}. When dealing with $x_1$ and $x_2$, we will analyze their behavior in each $U_i$ separately.

\begin{figure}
\centering

\begin{minipage}{0.5\textwidth}
\centering
\begin{tikzpicture}[scale=1.15]
\begin{axis}[
    axis lines=middle, xlabel={$a$}, ylabel={$b$},
    xmin=-0.2, xmax=3.2, ymin=-0.2, ymax=2.2,
    xtick={0,1,2,3}, ytick={0,1,2},
    axis line style={->}, axis on top
]
    \addplot[name path=para, thick, domain=0:2.666, samples=100] {x^2/4};
    \path[name path=bot1] (axis cs:0,0) -- (axis cs:1.333,0);
    \path[name path=bot2] (axis cs:1.333,0) -- (axis cs:2.666, 1.777);
    \addplot[name path=line, dashed, thick, domain=0.8:2.6] {x-1};
    
    % Shading U2 (Lightest)
    \addplot[green!10] fill between[of=para and bot1, soft clip={domain=0:1}];
    \addplot[green!10] fill between[of=para and line, soft clip={domain=1:2}];
    
    % Shading U3 (Medium)
    \addplot[blue!40] fill between[of=para and line, soft clip={domain=2:2.333}];
    \addplot[blue!40] fill between[of=para and bot2, soft clip={domain=2.333:2.666}];
    
    % Shading U1 (Darkest)
    \addplot[red!70] fill between[of=line and bot1, soft clip={domain=1:1.333}];
    \addplot[red!70] fill between[of=line and bot2, soft clip={domain=1.32:2.333}];

    \draw[thick] (axis cs:0,0) -- (axis cs:1.333,0) -- (axis cs:2.666, 1.777);
    \node at (axis cs:0.95, .115) {$U_2$};
    \node at (axis cs:2.2, 1.45) {$U_3$};
    \node at (axis cs:1.28, 0.115) {$U_1$};

\end{axis}
\end{tikzpicture}
\caption{The region $T$ with $A=0$ (dashed) and the three corresponding regions} \label{fig:TA0}
\end{minipage}\hfill
\begin{minipage}{0.5\textwidth}
\centering
\begin{tikzpicture}
\begin{axis}[
    axis lines=middle, xlabel={$a$}, ylabel={$b$},
    xmin=1.5, xmax=2.8, ymin=0.5, ymax=2.0,
    xtick={1.5, 2.0, 2.5}, ytick={0.5, 1.0, 1.5, 2.0},
    axis line style={->}, axis on top,
    % This ensures the fill calculation doesn't get discarded
    restrict y to domain=0:2.5, 
    restrict x to domain=1:3
]
    % Define boundaries - using slightly wider domains than the xmin/xmax
    \addplot[name path=paraZ, thick, domain=1.4:2.666, samples=100] {x^2/4};
    % Define the slanted bottom line explicitly as a plot so fillbetween handles it better
    \addplot[name path=botZ, thick, domain=1.333:2.666] {1.3333*(x-1.3333)};
    \addplot[name path=lineZ, dashed, thick, domain=1.4:2.7] {x-1};
    
    % Shading U2 (Lightest) - Domain limited by zoom xmin
    \addplot[green!10] fill between[of=paraZ and lineZ, soft clip={domain=1.5:2}];
    
    % Shading U3 (Medium)
    \addplot[blue!40] fill between[of=paraZ and lineZ, soft clip={domain=2:2.333}];
    \addplot[blue!40] fill between[of=paraZ and botZ, soft clip={domain=2.333:2.666}];
    
    % Shading U1 (Darkest) - Start domain exactly at zoom xmin (1.5)
    \addplot[red!70] fill between[of=lineZ and botZ, soft clip={domain=1.5:2.333}];

    % Redraw visible boundary line to be crisp
    \draw[thick] (axis cs:1.333,0) -- (axis cs:2.666, 1.777);

    \node at (axis cs:1.6, 0.75) {$U_2$};
    \node at (axis cs:2.3, 1.45) {$U_3$};
    \node at (axis cs:1.72, 0.62) {$U_1$};

\end{axis}
\end{tikzpicture}\caption{A zoomed-in view of the top right portion of Figure \ref{fig:TA0}}\label{fig:TA0zoom}
\end{minipage}\hfill
\end{figure}

In order to rule out $x_1$, we will show that, under the given assumptions, $x_1\geq\frac{4}{3}$ on $U_1$ and $x_1\leq 0$ on both $U_2$ and $U_3$. Note that $x_1\geq\frac{4}{3}$ is equivalent to $H_1\leq 0$, which is a contradiction because any metric in $\mathcal B_P\setminus(\mathcal B_H \cup \mathcal{B}_V)$ must have $H_1>0$. Moreover, $x_1\leq 0$ means that $g_{x_1,y,z,1}$ does not even represent a metric, so is obviously a contradiction.

We begin by showing that $x_1\leq 0$ on $U_2$ and $U_3$. Because  $A<0$ on $U_2$ and $U_3$, we only need to prove that $-B+\sqrt{\Delta}\geq 0$. Since $C=a^2-4b\geq 0$ on $T$, we have 
$$
\sqrt{\Delta}=\sqrt{B^2-4AC}\geq \vert B\vert\geq B
$$
on both $U_2$ and $U_3$. This implies that $-B+\sqrt{\Delta}\geq 0$, as desired.

Next, we show that $x_1 \geq \frac{4}{3}$ on $U_1$.  Consider the equation $x_1=\frac{4}{3}$. Since $A\neq 0$, this is equivalent to $-3B+3\sqrt{\Delta}=8A$. Following the approach in Remark~\ref{rem:proc}, we arrive at the equation $9B^2-36AC=64A^2+9B^2+48AB$. Dividing by $A\neq 0$ we get $64A+48B+36C=0$, which can be seen to be equivalent to $ \left(3 a-2 b-4\right)^{2}=0$. Thus, $x_1=\frac{4}{3}$ can occur
only along the line $3 a-2 b-4=0$. This line does not intersect $U_2$ or $U_3$ and it divides $U_1$ into two connected components (see Figure~\ref{fig:feasible1_A_pos}, where the dotted line depicts $-2b+3a-4=0$). Altogether we conclude that, on each subregion of $U_1$, the value of $x_1$ is either larger than $\frac{4}{3}$ or smaller than $\frac{4}{3}$. We take a sample point in each subregion:
$$
\left(\frac{3}{2}, \frac{1}{3}\right)\in U_1\cap \left\{{ b>\frac{3}{2}}a-2\right\},\qquad \left(2, \frac{32}{35}\right)\in U_1\cap \left\{{ b<\frac{3}{2}}a-2\right\}.
$$
We compute that $x_1\left(\frac{3}{2},\frac{1}{3}\right) = \frac{3}{2}  > \frac{4}{3}$ and $x_1\left(2,\frac{32}{35}\right) = \frac{10}{7} > \frac{4}{3}$, so $x_1\geq\frac{4}{3}$ on $U_1$, as claimed.

\begin{figure}
\centering

\begin{minipage}{0.5\textwidth}
\centering

\begin{tikzpicture}[scale=1.15]
\begin{axis}[
    axis lines=middle, xlabel={$a$}, ylabel={$b$},
    xmin=-0.2, xmax=3.2, ymin=-0.2, ymax=2.2,
    xtick={0,1,2,3}, ytick={0,1,2},
    axis line style={->}, axis on top
]
    \addplot[name path=para, thick, domain=0:2.666, samples=100] {x^2/4};
    \path[name path=bot1] (axis cs:0,0) -- (axis cs:1.333,0);
    \path[name path=bot2] (axis cs:1.333,0) -- (axis cs:2.666, 1.777);
    \addplot[name path=line, dashed, thick, domain=0.8:2.6] {x-1};
    \addplot[dotted, thick, domain=0:2.5]{0.5*(3*x-4)};
    
    % Shading U2 (Lightest)
    \addplot[green!10] fill between[of=para and bot1, soft clip={domain=0:1}];
    \addplot[green!10] fill between[of=para and line, soft clip={domain=1:2}];
    
    % Shading U3 (Medium)
    \addplot[blue!40] fill between[of=para and line, soft clip={domain=2:2.333}];
    \addplot[blue!40] fill between[of=para and bot2, soft clip={domain=2.333:2.666}];
    
    % Shading U1 (Darkest)
    \addplot[red!70] fill between[of=line and bot1, soft clip={domain=1:1.333}];
    \addplot[red!70] fill between[of=line and bot2, soft clip={domain=1.32:2.333}];

    \draw[thick] (axis cs:0,0) -- (axis cs:1.333,0) -- (axis cs:2.666, 1.777);
    \node at (axis cs:0.95, .115) {$U_2$};
    \node at (axis cs:2.2, 1.45) {$U_3$};
    \node at (axis cs:1.28, 0.115) {$U_1$};
    
\end{axis}
\end{tikzpicture}
\caption{The region $T$ with $A=0$ (dashed) and $-2b+3a-4 = 0$ (dotted)} 
\label{fig:feasible1_A_pos}

\end{minipage}\hfill
\begin{minipage}{0.5\textwidth}
\centering
\begin{tikzpicture}[scale=1.15]
\begin{axis}[
    axis lines=middle, xlabel={$a$}, ylabel={$b$},
    xmin=-0.2, xmax=3.2, ymin=-0.2, ymax=2.2,
    xtick={0,1,2,3}, ytick={0,1,2},
    axis line style={->}, axis on top
]
    \addplot[name path=para, thick, domain=0:2.666, samples=100] {x^2/4};
    \path[name path=bot1] (axis cs:0,0) -- (axis cs:1.333,0);
    \path[name path=bot2] (axis cs:1.333,0) -- (axis cs:2.666, 1.777);
    \addplot[name path=line, dashed, thick, domain=0.8:2.6] {x-1};
    \addplot[dotted, thick, domain=0:2.5]{2*x-3};
    
    % Shading U2 (Lightest)
    \addplot[green!10] fill between[of=para and bot1, soft clip={domain=0:1}];
    \addplot[green!10] fill between[of=para and line, soft clip={domain=1:2}];
    
    % Shading U3 (Medium)
    \addplot[blue!40] fill between[of=para and line, soft clip={domain=2:2.333}];
    \addplot[blue!40] fill between[of=para and bot2, soft clip={domain=2.333:2.666}];
    
    % Shading U1 (Darkest)
    \addplot[red!70] fill between[of=line and bot1, soft clip={domain=1:1.333}];
    \addplot[red!70] fill between[of=line and bot2, soft clip={domain=1.32:2.333}];

    \draw[thick] (axis cs:0,0) -- (axis cs:1.333,0) -- (axis cs:2.666, 1.777);
    \node at (axis cs:0.95, .115) {$U_2$};
    \node at (axis cs:2.2, 1.45) {$U_3$};
    \node at (axis cs:1.28, 0.115) {$U_1$};

\end{axis}
\end{tikzpicture}\caption{The region $T$ with $A=0$ (dashed) and $-b-3+2a=0$ (dotted)}\label{fig:feasible2_A_pos}
\end{minipage}\hfill
\end{figure}

Now we focus on $x_2$. We will show that $x_2$ satisfies $2b+3x_2-3a \leq 0$ on $T\cap\{A\neq 0\}$, so Lemma~\ref{LEM:conditions_P1_0_case1} implies that $x_2$ does not yield a metric in $\mathcal B_P\setminus(\mathcal B_H\cup\mathcal B_V)$.

To prove that $2b+3x_2-3a{<}0$ on $T\cap\{ A\neq 0\}$, we will see that the graph of $2b+3x_2-3a=0$ does not intersect $T\cap\{ A\neq 0\}$. Assuming this, at the sample points
$$
\left(\frac{3}{2}, \frac{1}{3}\right)\in U_1, \qquad \left(\frac{2}{3}, \frac{1}{19}\right)\in U_2,\qquad \left(\frac{43}{18}, \frac{17}{12}\right)\in U_3,
$$
we compute that $2b + 3x_2 - 3a$ is given by respectively $-\frac{1}{6}, -\frac{35}{209}, -\frac{13}{12},$ so all three are negative. Hence $2b+3x_2-3a<0$ on $T\cap\{A\neq 0\}$.

Consider the equation $2b+3x_2-3a=0$. Substitute $x_2=\frac{-B-\sqrt{\Delta}}{2A}$ and multiply both sides by $\frac{2A}{3}$. This gives $\frac{4}{3}bA-B-\sqrt{\Delta}-2A = 0$, which can be rearranged as $\left(\frac{4}{3}b-2\right)A-B= \sqrt{\Delta}$. Following the approach in Remark~\ref{rem:proc}, we square both sides and work out the expression, and we arrive at the equation
$$
0 =  A b \left(-2 b+3 a-4\right) \left(-b-3+2 a\right).
$$
Since $Ab\neq 0$ on $T\cap\{A\neq 0\}$, it follows that $2b+3x_2-3a$ can only vanish if $-2 b+3 a-4=0$ or $-b-3+2 a=0$. Let us show that neither of these possibilities occur.

Suppose $-2 b+3 a-4=0$. As mentioned in the case of $x_1$ and illustrated in Figure~\ref{fig:feasible1_A_pos}, this line does not intersect $U_2$ and $U_3$ and divides $U_1$ into two connected components. Moreover, the intersection of $-2 b+3 a-4=0$ and $U_1$ occurs only for $a\in \left(\frac{4}{3},2\right]$. Substituting $b=\frac{3}{2}a-2$, we get $A=-\frac{3}{2}(a-2) $, $B=-\frac{1}{4}(a-2)(3a-20)$, $C=(a-2)(a-4)$, and $\Delta=\frac{1}{16}(a-2)^2(3a-4)^2$. Since $a\in \left(\frac{4}{3},2\right]$ along the intersection, we conclude that $\sqrt{\Delta} = - \frac{1}{4}(a-2)(3a-4)$, and hence $x_2=2-\frac{a}{2}$. Consequently,  $2b+3x_2-3a=-\frac{3}{2}a+2 $ equals $-b$, and is therefore negative on all of $T$. 

Suppose $-b-3+2 a=0$. This line also does not intersect $U_2$ and $U_3$ and divides $U_1$ into two connected regions, see Figure \ref{fig:feasible2_A_pos}. This intersection of $-b-3+2 a=0$ and $U_1$ occurs for $a\in\left( \frac{11}{6} ,2 \right)$. Substituting $b=-3+2a$, we obtain $A=-3(a-2)$, $B=-(a-2)(a-9)$, $C=(a-2)(a-6)$, and $\Delta=(a-2)^2(a-3)^2$. For $a\in (0,2)$ we find that $x_2=1$. Hence $2b+3x_2-3a=a-3$, which is negative for $a\in\left( \frac{11}{6} ,2 \right)$. This finishes the proof.
\end{proof}

\subsection{\texorpdfstring{The case where $b-a+x<0$}{The case where b-a+x>0}}\label{SUBSEC:bax_neg}

The main results of this section are Proposition~\ref{prop:possible_x4} and Corollary~\ref{COR:BP_minus_BH}, which ultimately yield the set containing $\mathcal{B}_P\setminus(\mathcal{B}_H\cup\mathcal{B}_V)$.  Before stating them, we need to introduce some notation.

Because $b-a+x<0$, the equation $P_1=0$ is equivalent to $\sqrt{H_1V_1}=-4b+3a-3x$. In order to have a solution with both $H_1$ and $V_1$ non-zero, we must have
\begin{equation}\label{eq:ineq_case_bax_negative}
    -4b+3a-3  >  0.
\end{equation}
Under the assumption in~\eqref{eq:ineq_case_bax_negative}, $P_1=0$ is equivalent to $H_1V_1=(-4b+3a-3x)^2$. Expanding and rearranging this equation, we find that it is equivalent to:
\begin{equation}\label{EQ:Px34} 
3\left(a-2 b-1\right) x^{2}+\left(-3 a^{2}+2 \left(1+3 b\right) a-4 b^{2}+3 b\right) x+a^{2}-4 b=0.
\end{equation}
The left-hand side of \eqref{EQ:Px34} is a polynomial $Dx^2+Ex+C$, where:
\begin{align*}
    D &:= 3 \left(a-2 b-1\right), \\
   E &:= -3a^{2}+2 \left(1+3 b\right) a-4 b^{2}+3 b,  \\
     C &:= a^{2}-4b.
\end{align*}
The observations above prove the following lemma.

\begin{lemma}\label{LEM:conditions_P1_0_case2}
    A metric $g_{x,y,z,1}$ with $b-a+x<0$ belongs to $\mathcal{B}_P\setminus(\mathcal{B}_H\cup \mathcal{B}_V)$ if and only if the following two conditions are satisfied:
    \begin{itemize}
        \item $-4b+3a-3x > 0$, and 
        \item $Dx^2+Ex+C=0$.
    \end{itemize}
\end{lemma}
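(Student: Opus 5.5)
The plan is to follow the proof of Lemma~\ref{LEM:conditions_P1_0_case1}. We simply unwind the absolute value in $P_1$ using the sign assumption, and then keep careful track of which implications are reversible. As in that lemma, the statement is understood for metrics in the closure of $\mathcal P$ with $H_1,V_1>0$, that is, outside $\mathcal B_H\cup\mathcal B_V$. For such metrics the condition to be characterized is exactly $P_1=0$. Note also that the displayed inequality \eqref{eq:ineq_case_bax_negative} should read $-4b+3a-3x>0$.

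\textbf{Step 1 (removing the absolute value).} By Lemma~\ref{lem:convertab}, $P_1=\sqrt{H_1V_1}+b-3|b-a+x|$. Since $b-a+x<0$, we have $|b-a+x|=a-b-x$, so
$$P_1=\sqrt{H_1V_1}+4b-3a+3x.$$
Hence $P_1=0$ if and only if $\sqrt{H_1V_1}=3a-4b-3x$.

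\textbf{Step 2 (sign condition and squaring).} If $P_1=0$ and $H_1,V_1>0$, then the left-hand side $\sqrt{H_1V_1}$ is strictly positive, which forces $-4b+3a-3x>0$. Conversely, assume $-4b+3a-3x>0$. Both sides of the equation in Step~1 are then non-negative, so the equation is equivalent to its square $H_1V_1=(3a-4b-3x)^2$. In the sense of Remark~\ref{rem:proc}, no extraneous solutions are introduced.

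\textbf{Step 3 (the polynomial identity).} Set $C=a^2-4b$, so that $H_1V_1=(4-3x)(C+2ax-3x^2)/x$ with $x>0$. Multiplying through by $x$, the squared equation becomes
$$(4-3x)(C+2ax-3x^2)=x(3a-4b-3x)^2.$$
We expand both sides and compare coefficients:
\begin{itemize}
\item the $x^3$ terms are $9x^3$ on each side and cancel;
\item the $x^2$ coefficient of the difference is $12a-24b-12=4D$;
\item the $x$ coefficient of the difference is $8a-3C-(3a-4b)^2=-12a^2+8a+24ab-16b^2+12b=4E$;
\item the constant term of the difference is $4C$.
\end{itemize}
Therefore the left-hand side minus the right-hand side equals $4(Dx^2+Ex+C)$. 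So, for $x>0$, the squared equation holds if and only if $Dx^2+Ex+C=0$. Combining Steps 1–3 gives both directions of the equivalence.

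The only step that needs care is the equivalence in Step~2. One must use that $H_1V_1\neq0$ to get the strict inequality in one direction, and the non-negativity of the right-hand side to reverse the squaring in the other. The expansion in Step~3 is routine bookkeeping.
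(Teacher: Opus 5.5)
Your proposal is correct and follows the paper's argument: you remove the absolute value using $b-a+x<0$, use $H_1V_1\neq 0$ to force $-4b+3a-3x>0$, then square and expand to reach $Dx^2+Ex+C=0$; your coefficient check giving $4(Dx^2+Ex+C)$ is right, and so is your correction of the typo in \eqref{eq:ineq_case_bax_negative}. Making explicit the standing assumption (a metric in $\overline{\mathcal P}$ with $H_1,V_1>0$) is the reading the paper uses implicitly, since the literal ``if'' direction would also need the remaining non-negativity conditions, which the paper only handles later in Proposition~\ref{prop:possible_x4}.
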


In order to analyze the solutions of $Dx^2+Ex+C=0$, we divide our arguments into two cases: $D=0$ and $D\neq 0$. We first rule out the case $D= 0$.

\begin{lemma}\label{LEM:A2zero}
There are no metrics of the form $g_{x,y,z,1}$ in $\mathcal B_P\setminus(\mathcal B_H \cup \mathcal{B}_V)$ with $(a,b)\in T\cap\{D=0\}$ and $b-a+x<0$.
\end{lemma}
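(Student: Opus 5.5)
The plan is to mirror the proof of Lemma~\ref{LEM:A1zero}: on the line $D=0$ the quadratic of Lemma~\ref{LEM:conditions_P1_0_case2} degenerates to a linear equation. We solve it explicitly for $x$ in terms of $b$ and then show that the strict inequality $-4b+3a-3x>0$ from Lemma~\ref{LEM:conditions_P1_0_case2} fails on the part of the line lying in $T$. We argue by contradiction, assuming such a metric exists.

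\textbf{Step 1 (restrict to the line).} The condition $D=0$ means $a=2b+1$. Substituting into the coefficients gives
\begin{align*}
C &= (2b+1)^2-4b = 4b^2+1,\\
E &= -3(2b+1)^2+2(1+3b)(2b+1)-4b^2+3b = -(4b^2-b+1).
\end{align*}
Since $4b^2+1>0$, the case $E=0$ is impossible: it would force $C=0$. In fact $E<0$ for all real $b$, because $4b^2-b+1$ has negative discriminant. So Lemma~\ref{LEM:conditions_P1_0_case2} forces
$$x=-\frac{C}{E}=\frac{4b^2+1}{4b^2-b+1}.$$

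\textbf{Step 2 (locate the segment inside $T$).} Next I determine which $b$ are allowed. The bound $b\le a^2/4$ is automatic, since it reads $(2b-1)^2\ge 0$. The constraint $b\ge \frac{4}{3}\left(a-\frac{4}{3}\right)$ (relevant once $a>\frac43$) becomes $b\ge \frac{8b}{3}-\frac49$, that is, $b\le \frac{4}{15}$. Hence on $T\cap\{D=0\}$ we have $b\in\left(0,\frac{4}{15}\right]$.

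\textbf{Step 3 (contradiction with the strict inequality).} With $a=2b+1$, the condition $-4b+3a-3x>0$ reads $3x<2b+3$. Multiplying by the positive denominator $4b^2-b+1$, this is equivalent to
$$3(4b^2+1)<(2b+3)(4b^2-b+1)=8b^3+10b^2-b+3,$$
which simplifies to $0<8b^3-2b^2-b=b(4b+1)(2b-1)$. For $b>0$ this holds only when $b>\frac12$, contradicting $b\le\frac{4}{15}$. This rules out the case and proves the lemma.

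As a cross-check, the other condition $b-a+x<0$, i.e.\ $x<b+1$, reduces to $b^2(4b-1)>0$, i.e.\ $b>\frac14$. So that condition alone would \emph{not} suffice, and it is the inequality $-4b+3a-3x>0$ that does the work.

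The only delicate point is choosing the right one of the available inequalities and getting the boundary of $T$ along the line correct. The algebra itself is short enough to check by hand or with Maple.
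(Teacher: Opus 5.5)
Your proof is correct and follows the paper's argument: restrict to the line $D=0$, solve the linear equation $Ex+C=0$ (with $E$ never vanishing) for $x$, and show that $-4b+3a-3x<0$ on the part of that line lying in $T$. The only differences are that you parametrize by $b\in(0,\tfrac{4}{15}]$ where the paper uses $a\in(1,\tfrac{23}{15}]$, and one harmless slip: with $a=2b+1$ the bound $b\le a^2/4$ reduces to $4b^2+1\ge 0$ (i.e.\ $C\ge 0$), not $(2b-1)^2\ge 0$.
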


\begin{proof}
Suppose for a contradiction that there exists such a metric. By Lemma~\ref{LEM:conditions_P1_0_case2}, the parameter $x$ must satisfy $Ex+C=0$. We will show that any solution to $Ex+C=0$ satisfies $-4b+3a-3x<0$, hence giving the desired contradiction.

As a first step we study the set $T\cap\{D=0\}$. Note that $D=0$ is equivalent to $b = \frac{1}{2}(a-1)$. Since $b>0$ in $T$, the points in the intersection must have $a > 1$.  Moreover, the line $b = \frac{1}{2}(a-1)$ intersects the line $b =\frac{4}{3}\left(a-\frac{4}{3}\right)$ (which is a bounding curve of $T$) when $a = \frac{23}{15}$. Thus, we find $a \in (1,\frac{23}{15}]$.

Now consider the equation $Ex+C=0$. We observe that in this case $E$ cannot vanish; indeed, if we substitute $b = \frac{1}{2}(a-1)$ in $E$ we obtain $E=-\frac{1}{2}(2a^2-5a+5)$, which is a polynomial with no real roots. It follows that $x=-\frac{C}{E}$. Substituting $b = \frac{1}{2}(a-1)$ into $C$ gives $C=a^2-2a+2$, hence
$$
x = \frac{2(a^2-2a+2)}{2a^2-5a+5}.
$$

Finally, substituting $b = \frac{1}{2}(a-1)$ and $x = \frac{2(a^2-2a+2)}{2a^2-5a+5}$ in $-4b+3a-3x$ gives:
$$
-4b+3a-3x=-\frac{(1-a)(a-2)(2a-1)}{2a^2-5a+5}.
$$
The denominator equals $-2E$ which, as we have observed above, is never zero. Moreover, $2a^2-5a+5$ is positive for all $a\in\R$. As for the numerator, it is straightforward to check that it is positive for all $a\in \left(1,\frac{23}{15}\right]$. The minus sign in front of the fraction implies that $-4b+3a-3x<0$, as claimed at the beginning of the proof.
\end{proof}

Now we consider the case $D\neq 0$. The solutions to \eqref{EQ:Px34} are the roots of a quadratic polynomial in the variable $x$:
\begin{equation}\label{EQ:x3_and_x4}
    x_3=\frac{-E+\sqrt{\Gamma}}{2D},\qquad x_4=\frac{-E-\sqrt{\Gamma}}{2D},
\end{equation}
where in this case $\Gamma := E^2 - 4DC$. We make a couple of observations about $\Gamma$ in Lemmas~\ref{LEM:SimplifyGamma} and \ref{LEM:x3_x4_continuous_on_T-A}.

\begin{lemma}\label{LEM:SimplifyGamma} 
Along the curve $b = \frac{a^2}{4}$ with $a\in(0,\frac{8}{3}]$, we have $\Gamma = -\frac{1}{4} a (a^3-6a^2+9a-8)\neq 0$.
\end{lemma}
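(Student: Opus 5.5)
The plan is to exploit the fact that on the parabola $b=\frac{a^2}{4}$ the constant coefficient $C=a^2-4b$ of the quadratic \eqref{EQ:Px34} vanishes identically. Hence along this curve
$$
\Gamma = E^2-4DC = E^2,
$$
and the problem reduces to computing $E$ on the curve and checking that it has no zeros for $a\in(0,\frac{8}{3}]$.

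\textbf{Step 1: compute $E$ on the curve.} Substituting $b=\frac{a^2}{4}$ into $E=-3a^2+2(1+3b)a-4b^2+3b$ and collecting terms gives
$$
E=-3a^2+2a+\tfrac{3}{2}a^3-\tfrac14 a^4+\tfrac34 a^2=-\tfrac14 a\,(a^3-6a^2+9a-8).
$$
This is exactly the expression displayed in the statement. So the identity as I would establish it reads
$$
\Gamma=E^2=\tfrac{1}{16}a^2(a^3-6a^2+9a-8)^2, \qquad \sqrt{\Gamma}=|E|,
$$
and the displayed formula is $E$ itself, equivalently $\pm\sqrt{\Gamma}$. I would check this normalization against the later use of $x_3,x_4$ on this boundary curve. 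Whatever the intended normalization, the content of the lemma is that this quantity does not vanish, and that is what matters for the square root in \eqref{EQ:x3_and_x4}.

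\textbf{Step 2: show non-vanishing.} Since $a>0$, it suffices to show that $p(a)=a^3-6a^2+9a-8$ has no root in $(0,\frac{8}{3}]$.
\begin{itemize}
\item The derivative is $p'(a)=3(a-1)(a-3)$, so $p$ has a local maximum at $a=1$ with $p(1)=-4<0$.
\item $p$ decreases on $[1,3]$ and has a local minimum $p(3)=-8$.
\item On $(0,1]$, $p$ is increasing, so $p\le p(1)<0$ there.
\end{itemize}
Therefore $p<0$ on $(0,3]$, and in particular on $(0,\frac{8}{3}]$. Hence $E>0$ there, and $\Gamma=E^2>0$.

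\textbf{Main obstacle.} There is no real difficulty. The only delicate point is bookkeeping: reconciling whether the stated formula represents $\Gamma$ or $\sqrt{\Gamma}$, which is resolved by the observation $C=0$ above. The positivity $\Gamma>0$ on the curve is the useful conclusion either way.
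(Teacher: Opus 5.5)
Your proposal is correct and matches the paper's argument. The paper also evaluates the relevant quantity on the curve as $\frac14|a(a^3-6a^2+9a-8)|$ and uses $p'(a)=3(a-1)(a-3)$ to show that the cubic has maximum $-4$ at $a=1$ on $[0,\frac{8}{3}]$.

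Your observation that $C=0$ on $b=\frac{a^2}{4}$ is also right. It gives $\Gamma=E^2$, so the displayed formula is really $E=\sqrt{\Gamma}$. This is a typo in both the statement and the paper's proof, which write $\Gamma$ where $\sqrt{\Gamma}$ is meant. Your reading agrees with how the lemma is used later to obtain $x_4=-E/D$.
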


\begin{proof} It is straight forward to verify that $$\Gamma = \sqrt{ \frac{1}{16}a^2 (a^3-6a^2+9a-8)} = \frac{1}{4}| a (a^3-6a^2+9a-8)|.$$   The maximum value of the cubic factor on $[0,\frac{8}{3}]$ is $-4$, occurring when $a=1$.  This follows since its derivative is $3(a-1)(a-3)$.  Thus, as $a\in(0,\frac{8}{3}]$, it follows that $|a (a^3-6a^2 +9a-8| = - a(a^3-6a^2+9a-8)\neq 0$, completing the proof.
\end{proof}

\begin{lemma}\label{LEM:x3_x4_continuous_on_T-A}
For any $(a,b)\in T$ it holds that $\Gamma \geq 0$.  In particular, the formulas for $x_3$ and $x_4$ define continuous functions on $T\setminus \{D=0\}$.
\end{lemma}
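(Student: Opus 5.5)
We will mirror the proof of Lemma~\ref{LEM:x1_x42_continuous_on_T-A}: $\Gamma=E^2-4DC$ is a polynomial in $(a,b)$ with integer (or rational) coefficients, continuous on the compact closure $\overline{T}$. It therefore suffices to show that its absolute minimum on $\overline{T}$ is non-negative. That minimum is attained either on the boundary of $\overline{T}$ or at an interior critical point. The continuity of $x_3,x_4$ on $T\setminus\{D=0\}$ then follows at once, since $\sqrt{\Gamma}$ is continuous wherever $\Gamma\ge 0$ and $2D\neq 0$ there.

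\textbf{Boundary.} We restrict $\Gamma$ to each of the three boundary curves of $T$ from Proposition~\ref{prop:feas1}.
\begin{itemize}
\item Along $b=\frac{a^2}{4}$, Lemma~\ref{LEM:SimplifyGamma} gives $\Gamma=-\frac14 a(a^3-6a^2+9a-8)>0$ for $a\in(0,\frac83]$.
\item Along $b=0$, we have $E=-3a^2+2a$, $D=3(a-1)$ and $C=a^2$. Hence $\Gamma=a^2(3a-2)^2-12(a-1)a^2=a^2(9a^2-24a+16)=a^2(3a-4)^2\ge 0$.
\item Along $b=\frac43(a-\frac43)$, we substitute and expect the result to factor as $(3a-4)^2$ times a quadratic, as happened for $\Delta$. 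We then check that this quadratic has negative discriminant or is positive on $[\frac43,\frac83]$, using Sturm's theorem (Theorem~\ref{T:Sturm_root}) if needed.
\end{itemize}

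\textbf{Interior critical points.} We compute $\Gamma_a$ and $\Gamma_b$ and then the resultants $\operatorname{res}_b(\Gamma_a,\Gamma_b)\in\mathbb Z[a]$ and $\operatorname{res}_a(\Gamma_a,\Gamma_b)\in\mathbb Z[b]$, factoring both.
\begin{itemize}
\item Keep only the roots with $a\in(0,\frac83)$ and $b\in(0,\frac{16}{9})$. Any irrational roots are located with Sturm sequences.
\item For each surviving candidate $a_0$, restrict $\Gamma_a$ or $\Gamma_b$ to $a=a_0$ and intersect its roots with the $b$-candidates. This eliminates spurious pairs and pairs lying outside the interior of $T$.
\item At each remaining interior critical point, evaluate $\Gamma$ and verify that it is positive, by exact arithmetic or rational bounds on the radicals as in the $\Delta$ case.
\end{itemize}

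\textbf{Main obstacle.} The critical-point analysis is the hard step. The resultants may have high-degree irreducible factors whose roots cannot be written down explicitly. In that case I would first try to show that no critical point lies in the interior of $T$, by counting roots in the relevant intervals with Sturm sequences. Failing that, I would bound the roots inside small rational intervals, check that the candidate boxes either miss $T$ or contain no common zero, and then bound $\Gamma$ below on the finitely many remaining boxes.

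An alternative worth trying first is to look for an algebraic identity. Since $C=a^2-4b\ge0$ on $T$, if $D\le 0$ on a subregion then $\Gamma\ge E^2\ge0$ there immediately. It would then only remain to treat $T\cap\{D>0\}=T\cap\{b<\frac{a-1}{2}\}$, a thin region near the lower-right edge, which may make the critical-point search much smaller.
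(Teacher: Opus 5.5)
Your proposal is correct, but it takes a different route from the paper. The paper does not reuse the minimum argument from the $\Delta$ lemma. It shows that the zero set of $\Gamma$ meets $\overline{T}$ only in $(0,0)$ and $(\frac43,0)$, then fixes the sign with the sample point $(1,\frac18)$. Beyond the three boundary restrictions you also propose, this requires four further ingredients:
\begin{itemize}
\item a local analysis at both points: the implicit function theorem with a second-order comparison against $b=\frac{a^2}{4}$ at the origin, and a positive definite Hessian at $(\frac43,0)$;
\item the observation $C\ge0$, which confines interior zeros to $\{D>0\}$;
\item $\operatorname{res}_b(\Gamma_a,\Gamma_b)$, to show the interior zero set is a smooth curve;
\item a second resultant $\operatorname{res}_b(\Gamma,\Gamma_a)$, to exclude closed components.
\end{itemize}

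Your ``alternative'' is what makes your route clean, and you should lead with it rather than keep it as a fallback. On $T\cap\{D\le0\}$ you get $\Gamma\ge E^2\ge0$ for free. Only the closure of $T\cap\{D>0\}$ remains, which is the triangle with vertices $(1,0)$, $(\frac43,0)$, $(\frac{23}{15},\frac4{15})$. Its edges lie on:
\begin{itemize}
\item $b=0$, where $\Gamma=a^2(3a-4)^2$;
\item the slanted edge of $T$, where $\Gamma=\frac{1}{6561}(3a-4)^2(3249a^2-20760a+43072)$ and the quadratic has negative discriminant, as you predicted;
\item $D=0$, where $\Gamma=E^2$.
\end{itemize}

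For interior critical points, use the factorization $\operatorname{res}_b(\Gamma_a,\Gamma_b)=-1179648(3a-4)(a+1)q(a)$. The quintic $q$ has no root in $(1,\frac{23}{15}]$, so only $a_0=\frac43$ survives. There $\Gamma_b(\frac43,b)=b\bigl(64b^2-264b+\frac{278}{3}\bigr)$ has no root with $0<b<\frac16$, so the minimum is on the boundary.

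This needs one resultant and no local or topological analysis. It also gives $\Gamma>0$ on $T$, since an interior zero of a non-negative function would be a critical point; the paper gets this strict positivity from its zero-set description.

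Running your main plan on all of $T$ would instead meet genuine interior critical points. For example, $\Gamma$ has a saddle near $(0.97,0.12)$, coming from a root of $q$. Certifying positivity at such high-degree algebraic points is exactly the obstacle you anticipated. Since that saddle has $a<1$ and hence $D<0$, the bound $\Gamma\ge E^2$ makes locating such points unnecessary.

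One small correction: on $b=\frac{a^2}{4}$ one has $C=0$, so $\Gamma=E^2$. The expression $-\frac14a(a^3-6a^2+9a-8)$ in Lemma~\ref{LEM:SimplifyGamma} is really $E=\sqrt{\Gamma}$ there, but your conclusion on that arc is unaffected.
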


\begin{proof}
In order to prove that $\Gamma\geq 0$ on $T$, we will show that the zero set of $\Gamma$ intersects the closure of $T$ only in isolated points.  Since $\Gamma$ is a continuous function of $(a,b)$, it will then follow that on $T$ either $\Gamma \geq 0$ or $\Gamma \leq 0$. Since $\Gamma\left(1,\frac{1}{8}\right) = \frac{385}{256} > 0$, we conclude that $\Gamma \geq 0$ on $T$.

We begin by showing that the zero set of $\Gamma$ intersects the boundary of $T$ only at the points $(0,0)$ and $\left(\frac{4}{3},0\right)$.  First consider the boundary curve $b = \frac{a^2}{4}$ with $a\in \left[0,\frac{8}{3}\right]$. When $a=0$ we get the obvious solution $(0,0)$. For $a\in(0,\frac{8}{3}]$, Lemma~\ref{LEM:x3_x4_continuous_on_T-A} implies that there are no further points in the zero set. Second, along the boundary curve $b=0$ with $a\in \left(0,\frac{4}{3}\right]$ the equation $\Gamma=0$ simplifies to $a^2(3a-4)^2 = 0$, which yields the point $\left(\frac{4}{3},0\right)$. Finally, along the curve $b = \frac{4}{3}\left(a-\frac{4}{3}\right)$ the equation $\Gamma = 0$ is equivalent to
$$
(3a-4)^2 ( 3249a^2 - 20760a + 43072)=0.
$$  
The quadratic factor has a discriminant of $-128786112 < 0$, so it has no real zeros. Since the boundary curve $b = \frac{4}{3}\left(a - \frac{4}{3}\right)$ has domain $a\in \left(\frac{4}{3},\frac{8}{3}\right]$, we conclude that there are no additional intersections in this case.

Now we analyze the behavior of $\Gamma$ in small neighborhoods of the points $(0,0)$ and $\left(\frac{4}{3},0\right)$. Consider initially the point $(0,0)$. We claim that the zero set of $\Gamma$ is disjoint from $T$ in a deleted neighborhood of $(0,0)$.  Since the partial derivative $\Gamma_b(0,0)\neq 0$, by the implicit function theorem,  there is a neighborhood of $a =0\in \mathbb{R}$ for which the set $\Gamma = 0$ equals the graph of some function $b = g(a)$.  We claim that $g(a) > \frac{a^2}{4}$ for all $a$ in a possibly smaller deleted neighborhood of $a=0$, so that $(a,g(a))\notin T$.  To verify this, we compute that the first derivative of $g$ at $a=0$ is equal to $-\frac{\Gamma_a}{\Gamma_b}=0$,  and the second derivative is $\frac{2}{3}$. For the curve $b = \frac{a^2}{4}$, the first derivative at $a=0$ is $0$ and the second derivative is $\frac{1}{2}$.  Since $\frac{2}{3} > \frac{1}{2}$, we conclude that $g(a) > \frac{a^2}{4}$ for small $a>0$. It follows that in a deleted neighborhood of $(0,0)$, the zero set of $\Gamma$ is disjoint from $T$.

We next claim that $\left(\frac{4}{3},0\right)$ is an isolated point of the zero set of $\Gamma$.  We observe that $\left(\frac{4}{3},0\right)$ is a critical point of $\Gamma$ in the sense that both partial derivatives, $\Gamma_a$ and $\Gamma_b$, vanish at $\left(\frac{4}{3},0\right)$. In addition, the Hessian $\left(\begin{smallmatrix}\Gamma_{aa} & \Gamma_{ba}\\\Gamma_{ab}&\Gamma_{bb} \end{smallmatrix}\right)$ of $\Gamma$ at $\left(\frac{4}{3},0\right)$ is $\left(\begin{smallmatrix} 32 & -52 \\ -52 & \frac{278}{3}\end{smallmatrix}\right)$, which is positive definite.  It follows that $\left(\frac{4}{3},0\right)$ is a strict local minimum of $\Gamma$, so that $\left(\frac{4}{3},0\right)$ is an isolated point of the zero set.

To conclude the proof, we will show that the zero set of $\Gamma$ does not intersect the interior of $T$.  Our first step is to show that the intersection would necessarily be a smooth $1$-manifold of the interior of $T$.  From the implicit function theorem, it is sufficient to verify that at least one of the partial derivatives of $\Gamma$ is non-zero at every point in the interior of $T$.

Towards that end, we first observe that $C\geq 0$ for all $(a,b)\in T$, and moreover $C>0$ if $b\neq \frac{a^2}{4}$.  Thus, in order to have $E^2 - 4DC = 0$, it is necessary that $D > 0$, or equivalently $b < \frac{a-1}{2}$.  We note that $(a,b)\in T$ and $D > 0$ implies $a\in \left(1,\frac{23}{15}\right]$ and $b\in \left(0,\frac{4}{15}\right]$.

Now, in order to find common zeros of $\Gamma_a$ and $\Gamma_b$, we compute the resultant $\operatorname{res}_b(\Gamma_a,\Gamma_b)$, see Section~\ref{SEC:resultant_Sturm} for details:
$$
\operatorname{res}_b(\Gamma_a,\Gamma_b) = 
-1179648 (3a - 4)(a + 1)(216a^5 - 1116a^4 + 2240a^3 - 2220a^2 + 933a - 58).
$$
Applying Sturm's Theorem (see also Remark~\ref{REM:Sturm}) to the last factor, we see that it has no zeros with $a\in \left(1,\frac{23}{15}\right]$.  The factor $(a+1)$ obviously has no zeros with $a\in \left(1,\frac{23}{15}\right]$.  Thus, we conclude that any common zero $(a_0,b_0)$ of $\Gamma_a$ and $\Gamma_b$ must have $a_0 = \frac{4}{3}$.  Substituting $a = \frac{4}{3}$ into the equation $\Gamma = 0$, we find $\frac{1}{3} b^2 ( 48b^2 -264b + 139) = 0$, so $b_0$ must be either equal to $0$ or a root of the quadratic factor.  The case $b_0=0$ corresponds to a boundary point of $T$, while the roots of the quadratic factor are $\frac{11}{4} \pm \frac{\sqrt{42}}{3}$, both of which are larger than $\frac{4}{15}$.

It follows that the intersection of the zero set of $\Gamma$ and the interior of $T$ is a relatively closed embedded submanifold.  Since we have already shown that the intersection of the zero set of $\Gamma$ and the boundary consists of isolated points, it follows that, in fact, the intersection of the zero set of $\Gamma$ and the interior of $T$ can contain no components homeomorphic to an interval.  In other words, the intersection must be, up to diffeomorphism, a disjoint union of a finite collection of circles.  We conclude the proof by showing there are no such circles.

To see this, assume for a contradiction that there is some circle. Then there are at least two points (the top most and the bottom most) where the tangent lines are horizontal, so that $\Gamma_a = 0$. To find common zeros $(a_0,b_0)$ of $\Gamma$ and $\Gamma_a$ we compute the resultant:
$$
\operatorname{res}_b(\Gamma,\Gamma_a) = 1179648 a(72a^5 - 588a^4 + 1772a^3 - 2940a ^2 + 2631a - 1096)(a + 1)^2(3a - 4)^2.
$$  
Since $a_0\in \left(1,\frac{23}{15}\right]$, we conclude that either $a_0 = \frac{4}{3}$ or $a_0$ is a root of $(72a^5 - 588a^4 + 1772a^3 - 2940a^2 + 2631a - 1096)$.  In the first case, as argued above, we find $b_0 = 0$, obtaining a boundary point instead of an interior point. In the second case, Sturm's Theorem shows it has no zeros in $\left(1,\frac{23}{15}\right]$, completing the proof.
\end{proof}

\begin{lemma}\label{LEM:no_x3}  
There are no metrics of the form $g_{x,y,z,1}$ in $\mathcal B_P\setminus(\mathcal B_H \cup \mathcal{B}_V)$ with $(a,b)\in T\cap\{D\neq 0\}$, with $b-a+x<0$ and with $x=x_3(a,b)$.
\end{lemma}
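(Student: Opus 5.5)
We argue by contradiction, mirroring the treatment of $x_1$ in Lemma~\ref{LEM:A1nonZero}. Suppose such a metric exists. By Lemma~\ref{LEM:x3_x4_continuous_on_T-A}, $x_3$ is continuous on $T\cap\{D\neq 0\}$. This set splits into the components $T\cap\{D>0\}$ and $T\cap\{D<0\}$; the latter may need further splitting if $D=0$ disconnects it. Recall from the proof of Lemma~\ref{LEM:x3_x4_continuous_on_T-A} that $T\cap\{D>0\}$ lies in $a\in(1,\frac{23}{15}]$, $b\in(0,\frac{4}{15}]$. On each component we aim to show one of the following: $x_3\le 0$, so it is not a metric; or $x_3\ge\frac43$, so $H_1\le 0$; or the constraints $b-a+x_3<0$ and $-4b+3a-3x_3>0$ from Lemma~\ref{LEM:conditions_P1_0_case2} fail.

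\textbf{Component $D<0$.} Since $C=a^2-4b\ge 0$ on $T$, we have $\Gamma=E^2-4DC\ge E^2$, hence $\sqrt{\Gamma}\ge |E|\ge E$. Therefore $-E+\sqrt{\Gamma}\ge 0$, and dividing by $2D<0$ gives $x_3\le 0$. This is exactly the argument used for $x_1$ on $U_2,U_3$.

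\textbf{Component $D>0$.} Here $-4DC\le 0$, so $\sqrt\Gamma\le|E|$, and no sign comes for free. First I would check the sign of $E$ on this small region, expecting $E<0$ there; substituting the endpoints of $D=0$ gave $E<0$ in Lemma~\ref{LEM:A2zero}. If $E<0$, then $x_3=\frac{-E+\sqrt\Gamma}{2D}$ is large, and I expect $x_3\ge\frac43$. To prove this, I would locate the curve $x_3=\frac43$ via Remark~\ref{rem:proc}. Writing it as $3\sqrt{\Gamma}=8D+3E$ and squaring gives $9E^2-36DC=(8D+3E)^2$. Dividing by $D\neq0$ yields the linear relation $64D+48E+36C=0$, hopefully a perfect square or a simple factorisation as in the $x_1$ case. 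Next I would check that this curve misses $T\cap\{D>0\}$, or only cuts it into pieces. On each piece I would evaluate $x_3$ at a sample point, such as $(a,b)=(\frac43,\frac1{10})$, and confirm $x_3>\frac43$. Near $D\to0^+$ we have $x_3\to+\infty$ because $-E+\sqrt\Gamma\to 2|E|>0$, which supports the claim.

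If instead $x_3<\frac43$ on part of the region, I would use the backup inequality $-4b+3a-3x_3>0$. I would find its zero locus by the same squaring procedure: substitute $x_3$, isolate $\sqrt\Gamma$, square, and factor. I would then use resultants and Sturm sequences (Section~\ref{SEC:resultant_Sturm}) to show the relevant factor has no zeros in $a\in(1,\frac{23}{15}]$, and finish with sample points.

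The main obstacle is the $D>0$ component. The squared equations may carry extraneous branches from the $x_4$ root. The region also touches the boundary points $(\frac43,0)$, where $\Gamma=0$ and the components meet, so continuity arguments must stay on the open pieces and treat these boundary points separately.
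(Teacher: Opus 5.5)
Your proposal is correct and is essentially the paper's proof. On $T\cap\{D<0\}$ you get $x_3\le 0$ from $\sqrt{\Gamma}\ge|E|$. On $T\cap\{D>0\}$, your relation $64D+48E+36C=0$ is exactly $-12(3a-4b-4)^2=0$, and the line $b=\frac{3}{4}a-1$ misses $T$ entirely. So one sample point suffices to get $x_3>\frac{4}{3}$: the paper uses $(\frac{7}{5},\frac{1}{10})$, where $x_3=2$, and your $(\frac{4}{3},\frac{1}{10})$, where $x_3\approx 2.78$, also works. This makes the backup via $-4b+3a-3x_3>0$, and your worries about extraneous roots and about $(\frac{4}{3},0)\notin T$, unnecessary.
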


\begin{proof} 
Suppose there exists such a metric. Recall from Lemma~\ref{LEM:x3_x4_continuous_on_T-A} that $x_3$ defines a continuous function on $T\cap \{D\neq 0\}$. Note that $T\cap\{D\neq 0\}$ has two connected components, namely $T\cap\{D> 0\}$ and $T\cap\{D< 0\}$, see Figure~\ref{fig:TD0}. We will denote
$$
W_1:=T\cap\{D> 0\},\qquad W_2:=T\cap\{D< 0\}.
$$
We will show that, under the given assumptions, $x_3>\frac{4}{3}$ on $W_1$ and $x_3\leq 0$ on $W_2$. This concludes the proof, since any metric $g_{x_3,y,z,1}\in\mathcal B_P\setminus(\mathcal B_H \cup \mathcal{B}_V)$ must have $x_3\in (0,\frac{4}{3})$.

We first show that $x_3\leq 0$ on $W_2$. This is equivalent to $-E+\sqrt{\Gamma}\geq 0$ because $D<0$ on $W_2$. Since $C=a^2-4b\geq 0$ on $T$, we have 
$$
\sqrt{\Gamma}=\sqrt{E^2-4DC}\geq \vert E\vert\geq E
$$
on $W_2$. Thus we get that $-E+\sqrt{\Gamma}\geq 0$, as desired.

Now we show that $x_3>\frac{4}{3}$ on $W_1$. A simple calculation shows that $x_3 = \frac{4}{3}$ if and only if $4D(-4+3a-4b)^2 = 0$. As $D\neq 0$, we find that $x_3 = \frac{4}{3}$ if and only if $b = -1 + \frac{3}{4}a$. We claim that the graph of $b = -1 + \frac{3}{4} a$ does not intersect the region $T$, and in particular it does not intersect $W_1$. To prove this, consider a point $(a,b)$ with $b = -1 + \frac{3}{4} a$. If $a\leq \frac{4}{3}$, we find $b\leq 0$, so $(a,b)\notin T$. If otherwise $a>\frac{4}{3}$, then we observe that $b<\frac{4}{3}a-\frac{16}{9}$, so $(a,b)\notin T$ either.

Since $x_3$ is continuous on $W_1$, it follows that either $x_3<\frac{4}{3}$ or $x_3>\frac{4}{3}$ for all $(a,b)\in W_1$. Taking the sample point $(a,b) = (\frac{7}{5}, \frac{1}{10})\in W_1$, we see that $x_3\left(\frac{7}{5}, \frac{1}{10}\right) = 2> \frac{4}{3}.$
Consequently $x_3 > \frac{4}{3}$ on $W_1$, as claimed. 
\end{proof}

\begin{figure}
\begin{center}
\begin{tikzpicture}
\begin{axis}[
    axis lines=middle,
    xlabel={$a$},
    ylabel={$b$},
    xmin=-0.2, xmax=3.2,
    ymin=-0.2, ymax=2.2,
    xtick={0,1,2,3},
    ytick={0,1,2},
    axis line style={->},
    axis on top,
    set layers,
]
    % Define the upper boundary (parabola)
    \addplot[name path=para, thick, domain=0:2.666, samples=100] {x^2/4};
    \addplot[name path=line, dashed, thick, domain=0.5:2.8, samples=100]{0.5*(x-1)};
    
    % Define the piecewise lower boundary
    % Path goes from (0,0) to (4/3,0) to (8/3, 16/9)
    \path[name path=bot] (axis cs:0,0) -- (axis cs:1.333,0) -- (axis cs:2.666, 1.777);
    
    % Fill the region R in grayscale
    \addplot[yellow!20] fill between[of=para and bot];
    
    % Draw the lower boundary line explicitly
    \draw[thick] (axis cs:0,0) -- (axis cs:1.333,0) -- (axis cs:2.666, 1.777);

    % Optional: Mark the tip of the region
    \filldraw (axis cs:2.666, 1.777) circle (1pt);

    %Label regions W1 and W2
    \node[below, label={$W_1$}] at (1.6, 0.005) {};
    \node[below, label={$W_2$}] at (1,0.005) {};

\addplot[teal!70] fill between[of=line and bot, soft clip={domain=1:1.533}];

\end{axis}
\end{tikzpicture}
\caption{The region $T$ with the line $D=0$}\label{fig:TD0}
\end{center}
\end{figure}

At this point, it follows from Lemmas~\ref{LEM:absvaluenonzero}, \ref{LEM:no_bax_positive}, \ref{LEM:A2zero} and \ref{LEM:no_x3} that any metric in $\mathcal B_P\setminus(\mathcal B_H\cup \mathcal{B}_V)$ must be of the form $g_{x,y,z,1}$ with $(a,b)\in T$, $b-a+x<0$ and $x=x_4(a,b)$. Our next result shows that in fact $g_{x_4,y,z,1}\in \mathcal B_P\setminus(\mathcal B_H\cup\mathcal{B}_V)$ implies that $(a,b)$ must be in the subset $S\subset T$ defined as:
\begin{equation}\label{EQ:set_S}
    S:=\left\{ (a,b)\in\R^2\; : \; 0<b\leq \frac{a^2}{4} \text{ and } b> a-\frac{3}{4}\right\}.
\end{equation}
More precisely, we have:

\begin{proposition}\label{prop:possible_x4}
A metric $g_{x,y,z,1}$ satisfies $b-a+x < 0$, $P_1=0$, $H_1 > 0$, $H_2, H_3 \geq 0$ and $V_1 > 0$ if and only if $(a,b)\in S$ and $x=x_4(a,b)$.
\end{proposition}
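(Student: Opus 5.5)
The plan is to reduce both directions to sign computations of explicit functions of $(a,b)$, using the same boundary-curve-plus-sample-point technique as in Lemmas~\ref{LEM:A1nonZero} and \ref{LEM:no_x3}. First observe that $H_2,H_3\geq 0$ and $H_1>0$ place $(y,z)\in(0,\frac43]^2$, so $(a,b)\in T$ by Proposition~\ref{prop:feas1}. The hypotheses of the proposition are exactly those used in the preceding lemmas, with $\mathcal B_H$, $\mathcal B_V$ and the closure condition replaced by $H_1>0$, $V_1>0$ and $H_2,H_3\geq 0$. Their proofs only used these inequalities, so they apply here. In particular, Lemma~\ref{LEM:conditions_P1_0_case2} shows that, given $b-a+x<0$, the condition $P_1=0$ with $H_1V_1>0$ is equivalent to $-4b+3a-3x>0$ together with $Dx^2+Ex+C=0$. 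Lemmas~\ref{LEM:A2zero} and \ref{LEM:no_x3} then force $D\neq 0$ and $x=x_4(a,b)$.

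For the forward direction it remains to show $b>a-\frac34$. I will define on $T\cap\{D\neq0\}$ the continuous functions
\[
\phi_1:=b-a+x_4,\qquad \phi_2:=-4b+3a-3x_4,\qquad \phi_3:=x_4,\qquad \phi_4:=\tfrac43-x_4,
\]
using Lemma~\ref{LEM:x3_x4_continuous_on_T-A}. I will also consider $V_1(x_4,a,b)$. For each of these I will determine its zero locus by substituting $x_4=\frac{-E-\sqrt\Gamma}{2D}$, isolating $\sqrt\Gamma$ and squaring as in Remark~\ref{rem:proc}. I will then factor the resulting polynomial with Maple. I expect the line $4b-4a+3=0$, i.e.\ $b=a-\frac34$, to appear as a factor in the squared equation for $\phi_1=0$ or for $\phi_2=0$, or for both. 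This line is exactly where one of the two defining inequalities switches sign. The other factors should either miss $T$ (checked via resultants and Sturm sequences, as in Lemma~\ref{LEM:x3_x4_continuous_on_T-A}) or correspond to extraneous roots, which I will discard by direct substitution.

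With these loci in hand, I partition $T\cap\{D\neq 0\}$ into the connected pieces cut out by $D=0$ and $b=a-\frac34$, namely $W_1$, and $W_2$ split into the parts above and below the line. On each piece I evaluate all the $\phi_i$ and $V_1$ at a rational sample point. This should give the following. On the pieces outside $S$, at least one required inequality fails, either $\phi_1<0$ or $\phi_2>0$, or else $x_4\notin(0,\frac43)$ or $V_1\leq 0$. On $S$, all of them hold. For the converse, I first check that $D<0$ on $S$. Indeed, the line $b=\frac{a-1}{2}$ lies below $b=a-\frac34$ whenever $b>0$, so $S\subset W_2$ and $x_4$ is defined and continuous there. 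I then use the sign information on $S$ to verify $b-a+x_4<0$, $-4b+3a-3x_4>0$, $0<x_4<\frac43$ and $V_1(x_4)>0$. By Lemma~\ref{LEM:conditions_P1_0_case2} these give $P_1=0$, and $H_2,H_3\geq 0$ hold since $(a,b)\in T$.

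The main obstacle is the factorization step for $\phi_1,\phi_2$ and $V_1(x_4)$. The squared expressions will be high-degree, and extraneous branches or the boundary of $S$ on the parabola $b=\frac{a^2}{4}$ may create spurious zero curves. There, $\Gamma$ degenerates only in the manner described in Lemma~\ref{LEM:SimplifyGamma}. To handle these, I would first try restricting each candidate factor to the relevant pieces and using resultants with the boundary curves of $S$ and $T$. Where that is inconclusive, I would fall back on the implicit-function and critical-point arguments used in Lemma~\ref{LEM:x3_x4_continuous_on_T-A}.
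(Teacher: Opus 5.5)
Your plan is correct and is essentially the paper's argument. Once Lemmas~\ref{LEM:A2zero} and \ref{LEM:no_x3} force $x=x_4$, the paper shows that $-4b+3a-3x_4$ can vanish only on $b=0$, $b=\frac34a-1$ and $b=a-\frac34$, and uses sample points to get $-4b+3a-3x_4\le 0$ on $W_1$ and on $W_2\setminus S$; for the converse it checks $b-a+x_4<0$, $4-3x_4>0$ and $V_1(x_4)>0$ on $S$ the same way, treating the parabola $b=\frac{a^2}{4}$ separately via Lemma~\ref{LEM:SimplifyGamma}. The extra functions you track in the forward direction are not needed, since $-4b+3a-3x_4\le 0$ off $S$ already suffices. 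One of them, $b-a+x_4$, even changes sign along $b=a-1$ inside $W_2\setminus\overline{S}$, so a single sample point does not fix its sign there. Your explicit checks that $D<0$ and $x_4>0$ on $S$ make precise points the paper leaves implicit.
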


\begin{corollary}\label{COR:BP_minus_BH}
We have an inclusion
$$
\mathcal B_P\setminus (\mathcal B_H\cup \mathcal B_V)\subset\{ g_{x,y,z,1}\; :\; (a,b)\in S\text{ and } x=x_4(a,b)   \}.
$$
\end{corollary}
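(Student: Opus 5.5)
The inclusion follows from the case analysis already carried out in this section, combined with Proposition~\ref{prop:possible_x4}. Fix a metric $g_{x,y,z,1}\in\mathcal B_P\setminus(\mathcal B_H\cup\mathcal B_V)$ and set $(a,b)=(y+z,yz)$ as in \eqref{EQ:change_variables}. The plan is to check each hypothesis of Proposition~\ref{prop:possible_x4} for this metric and then read off its conclusion.

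First, the sign conditions come from membership in $\partial\mathcal P$. Since $g_{x,y,z,1}\in\partial\mathcal P$, all nine functions $H_i,V_i,P_i$ are non-negative at $(x,y,z)$. Moreover $P_1=0$ by definition of $\mathcal B_P$. In particular $H_2,H_3\geq 0$, and by Remark~\ref{REM:square_0_43} we have $(a,b)\in T$. Since the metric is not in $\mathcal B_H$, we have $H_1\neq 0$, and together with $H_1\geq 0$ this gives $H_1>0$. Likewise, since the metric is not in $\mathcal B_V$, we have $V_1\neq 0$, hence $V_1>0$.

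Next, I would pin down the sign of $b-a+x$. Lemma~\ref{LEM:absvaluenonzero} excludes $b-a+x=0$ for any metric in $\mathcal B_P$. Lemma~\ref{LEM:no_bax_positive} excludes $b-a+x>0$ for metrics in $\mathcal B_P\setminus(\mathcal B_H\cup\mathcal B_V)$. Therefore $b-a+x<0$.

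At this point every hypothesis of the forward direction of Proposition~\ref{prop:possible_x4} holds: $b-a+x<0$, $P_1=0$, $H_1>0$, $H_2,H_3\geq0$ and $V_1>0$. That proposition then yields $(a,b)\in S$ and $x=x_4(a,b)$, which is exactly the asserted inclusion. There is no real obstacle in this step. The substantive work lies in Proposition~\ref{prop:possible_x4} itself and in the lemmas excluding $x_1,x_2,x_3$ and the degenerate cases $A=0$ and $D=0$, all of which we take as given. The only point needing care is that strict positivity of $H_1$ and $V_1$ comes from removing $\mathcal B_H$ and $\mathcal B_V$, not from $\partial\mathcal P$ alone.
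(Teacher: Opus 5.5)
Your argument is correct and is the paper's own route. The corollary is just the forward direction of Proposition~\ref{prop:possible_x4}, once you note that a metric in $\mathcal B_P\setminus(\mathcal B_H\cup\mathcal B_V)$ has $P_1=0$, $H_1,V_1>0$ and $H_2,H_3\geq 0$, and that $b-a+x<0$ follows from Lemmas~\ref{LEM:absvaluenonzero} and \ref{LEM:no_bax_positive}. You also correctly point out that the strict inequalities for $H_1,V_1$ come from removing $\mathcal B_H$ and $\mathcal B_V$, combined with non-negativity on $\partial\mathcal P$.
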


\begin{proof}[Proof of Proposition~\ref{prop:possible_x4}]
Let $g_{x,y,z,1}$ be a metric satisfying the hypotheses in the statement. Since $H_2$ and $H_3$ are non-negative, we know from Remark~\ref{REM:square_0_43} that $(y,z)\in(0,\frac{4}{3}]^2$ and hence $(a,b)\in T$. Since $P_1=0$, Lemmas~\ref{LEM:A2zero} and \ref{LEM:no_x3} imply that $D\neq 0$ and $x=x_4$. Recall from the proof of Lemma~\ref{LEM:no_x3} and Figure~\ref{fig:TD0} that $T\cap \{D\neq 0\}$ consists of two connected components $W_1$ and $W_2$, with $W_2$ containing the graph of $b= \frac{a^2}{4}$. We shall show that $-4b+3a-3x_4<0$ on $W_1$, which is a contradiction to Lemma~\ref{LEM:conditions_P1_0_case2}.

Indeed, recall from Lemma~\ref{LEM:x3_x4_continuous_on_T-A} that $x_4(a,b)$ defines a continuous function on $T\cap \{D\neq 0\}$. Consider the equation $-4b+3a-3x_4 = 0$. Substituting $x_4$ by its value and solving it for $b$, we find $b = 0$, $b=\frac{3}{4}a-1$ and $b =a-\frac{3}{4} $ are the only solutions. The graph of $b=0$ is not part of $T$, while $b = \frac{3}{4}a-1$ intersects the closure of $T$ only at $(\frac{4}{3},0)\notin T$.  The line $b=a-\frac{3}{4}$ intersects the interior of $T$. However, it does not intersect $W_1$, thus the sign of $-4b+3a-3x_4$ is constant on $W_1$. Using the sample point $(a,b)=\left(\frac{7}{5},\frac{1}{10}\right)\in W_1$ we find that $-4b+3a-3x_4 = - \frac{1}{10} <0$ on $W_1$.

It follows that $(a,b)$ must lie in $W_2$. Note that the line $b=a-\frac{3}{4}$ divides $W_2$ in two components and that one of the components is precisely the subset $S$, see Figure \ref{fig:W2S}. Thus, the sign of $-4b+3a-3x_4$ on $S$ is constant and it is also constant on $W_2\setminus S$. We take the sample points $(a,b)=\left(\frac{4}{5}, \frac{1}{7}\right)\in S$ and $(a,b)=\left(\frac{4}{3},\frac{1}{3}\right)\in W_2\setminus S$. For $\left(\frac{4}{5}, \frac{1}{7}\right)$ we find $-4b+3a-3x_4 =  \frac{4}{35}>0$.  Similarly, for $\left(\frac{4}{3}, \frac{1}{3}\right)$ we find $-4b+3a-3x_4 = -\frac{1}{3} < 0$. Thus, $-4b+3a-3x_4 \leq 0$ on $W_2\setminus S$. Altogether, we have shown that $-4b+3a-3x_4 > 0$ implies that $(a,b)\in S$.

Now we prove the converse. Let $g_{x,y,z,1}$ be a metric with $(a,b)\in S$ and $x=x_4$. Since $S\subset T$, we know that $H_2,H_3\geq 0$. Also, from the previous paragraph it is clear that $-4b+3a-3x_4{\color{teal}>}{\color{gray}\geq} 0$ on $S$. It remains to prove that $b-a+x_4<0$, $H_1(x_4,a,b)> 0$, and $V_1(x_4,a,b)> 0$ on $S$. 

To show that $b-a+x_4<0$, note that $b-a+x_4 = 0$ if and only if $b=0$, $b= \frac{3}{2}a-1$, or $b=a-1$.  None of these lines intersect $S$, so the sign of $b-a+x_4$ is constant on $S$. At $(a,b) = \left(\frac{4}{5}, \frac{1}{7}\right)$, we find $b-a+x_4 = - \frac{3}{35} < 0$, so $b-a+x_4<0$ on all of $S$.

We similarly find that $H_1(x_4,a,b) = 4-3x_4 = 0$ if and only if $b = \frac{3}{4}a-1$, whose graph does not intersect $S$. At $\left(\frac{4}{5}, \frac{1}{7}\right)$, we find $4-3x_4 = \frac{16}{7} > 0$, so we conclude that $H_1(x_4,a,b)>0$ for all $(a,b)\in S$.

Finally, following the procedure of Remark \ref{rem:proc}, we find that $V_1(x_4,a,b) = 0$ implies $$b (-4b-3+4a) (a-2b-1)(a^2-4b) = 0.$$   The first three factors yield curves which do not intersect $S$.  Finally, we argue $V_1(x_4,a,b)\neq 0$ when $b = \frac{a^2}{4}$.  Substituting $b=\frac{a^2}{4}$ in $D$ and $E$ and using the value of $\Gamma$ given in Lemma~\ref{LEM:SimplifyGamma}, we find that $x_4= \frac{-E-\sqrt{\Gamma}}{2D}= -\frac{a(a^3 - 6a^2 + 9a - 8)}{6a^2 - 12a + 12}$, from which it easily follows that $V_1 = \frac{a^2 (a-1)^2}{2(a^2-2a+2)}$.  Since the numerator is positive on $(0,1)$ and the denominator is always positive, it follows that $V_1 > 0$ along the boundary curve $b = \frac{a^2}{4}$.  It now follows that $V_1 > 0$ on all of $S$.
\end{proof}

\begin{figure}
\centering

\begin{minipage}{0.5\textwidth}
\centering
\begin{tikzpicture}
\begin{axis}[
    axis lines=middle,
    xlabel={$a$},
    ylabel={$b$},
    xmin=-0.2, xmax=3.2,
    ymin=-0.2, ymax=2.2,
    xtick={0,1,2,3},
    ytick={0,1,2},
    axis line style={->},
    axis on top,
    set layers,
]
    % Define the upper boundary (parabola)
    \addplot[name path=para, thick, domain=0:2.666, samples=100] {x^2/4};
    \addplot[name path=line, thick, domain=1:1.533, samples=100]{0.5*(x-1)};
    \addplot[name path= Sline, thick, dashed, domain=0.5:1.5, samples = 100]{x-0.75};
    
    % Define the piecewise lower boundary
    % Path goes from (0,0) to (4/3,0) to (8/3, 16/9)
    \path[name path=bot] (axis cs:0,0) -- (axis cs:1,0) -- (axis cs:1.533,0.2667) -- (axis cs:2.666, 1.777);
    
    % Fill the region R in grayscale
    \addplot[yellow!20] fill between[of=para and bot];
    
    % Draw the lower boundary line explicitly
    \draw[thick] (axis cs:0,0) -- (axis cs:1,0) -- (axis cs:1.533,0.2667) -- (axis cs:2.666,1.777);

    % Optional: Mark the tip of the region
    \filldraw (axis cs:2.666, 1.777) circle (1pt);

    \node[below, label={$S$}] at (0.6, 0.1) {};

    %Label regions W1 and W2
    %\node[below, label={$W_1$}] at (1.6, 0.005) {};
    %\node[below, label={$W_2$}] at (1.25,0.135) {};

\end{axis}
\end{tikzpicture}
\caption{The region $W_2$ with the line $b = a-3/4$ (dashed)}\label{fig:W2S}
\end{minipage}\hfill
\begin{minipage}{0.45\textwidth}
\centering
\begin{tikzpicture}
\begin{axis}[
    axis lines=middle,
    xlabel={$a$},
    ylabel={$b$},
    xmin=-0.1, xmax=1.1,
    ymin=-0.1, ymax=0.3,
    xtick={0, 0.5, 1.0},
    ytick={0, 0.1, 0.2},
    axis line style={->},
    axis on top,
    set layers,
    grid=none
]
    % 1. Define Upper Boundary (Parabola)
    \addplot[name path=para, thick, domain=0:1, samples=100] {x^2/4};
    
    % 2. Define Lower/Right Boundary
    % From (0,0) to (0.75,0) then to (1, 0.25)
    \path[name path=bot] (axis cs:0,0) -- (axis cs:0.75,0) -- (axis cs:1, 0.25);
    
    % 3. Shaded Region R
    \addplot[teal!20] fill between[of=para and bot];
    
    % 4. Explicit Boundary Lines (Thick Black)
    \draw[thick] (axis cs:0,0) -- (axis cs:0.75,0) -- (axis cs:1, 0.25);
    
    % 5. Region Label
    \node at (axis cs:0.5, 0.1) {$S$};

\filldraw (axis cs:1, 0.25) circle (1pt);
\node at (axis cs:0.85,0.25){$\left(1,\frac{1}{4}\right)$};
    
\end{axis}
\end{tikzpicture}
\caption{The region $S$}\label{fig:S}
\end{minipage}
\end{figure}

\begin{remark}\label{rem:nbh_of_S}
It is easy to see that if $(a,b)\in S$, then $a\in(0,1)$ and $b\in \left(0,\frac{1}{4}\right)$, see Figure~\ref{fig:S}.
\end{remark}

\section{\texorpdfstring{The derivative of $\tilde P_1$}{The derivative of P̃1}}\label{SEC:derivative_of_P1}

The goal of this section is to prove Proposition~\ref{prop:P}. As already noted at the beginning of Section~\ref{SEC:P1_0}, the fact that Proposition~\ref{prop:P} holds for metrics in $\mathcal B_H\cup\mathcal B_V$ follows from Propositions~\ref{prop:H} and \ref{prop:V}. Thus it remains to consider the metrics in $\mathcal B_P\setminus (\mathcal B_H\cup\mathcal B_V)$. Recall from Corollary~\ref{COR:BP_minus_BH} that 
$$
\mathcal B_P\setminus (\mathcal B_H\cup\mathcal B_V)\subset\{ g_{x,y,z,1}\; :\; (a,b)\in S\text{ and } x=x_4(a,b)   \},
$$
where $S$ is given in \eqref{EQ:set_S} and $x_4$ is given in \eqref{EQ:x3_and_x4}. We define the set in the right hand side as
$$
\hat S:=\{ g_{x,y,z,1}\; :\; (a,b)\in S\text{ and } x=x_4(a,b)   \}.
$$
First we note that $\tilde P_1$ is smooth at any point in $\hat S$. To show this, recall from Section~\ref{SEC:P1_0} that $b-a+x<0$ for any metric in $g_{x,y,z,1}\in\hat S$; in other words, the expression inside the absolute value of $P_1=\sqrt{H_1 V_1} +yz - 3\vert yz-y-z+x\vert$ is negative for all points in $\hat S$. Thus, the relevant formula for $\tilde P_1$ in this case is 
\begin{equation}\label{EQ:tilde_P_1}
\tilde P_1= \sqrt{\tilde H_1\tilde V_1} +4\frac{yz}{s^2} + 3\frac{x-y-z}{s}.
\end{equation}
Since $\tilde H_1$ and $\tilde V_1$ are smooth on $\R_+^4$, and moreover they are positive on $\hat S$ by Proposition~\ref{prop:possible_x4}, it follows that $\tilde P_1$ is smooth on $\hat S$. Then Proposition~\ref{prop:P} follows from the following result.

\begin{proposition}\label{prop:Pprime}  
    For all $g\in\hat S$ it holds that $\frac{d}{d\ell}\Big\vert_{\ell=0} \tilde P_1(g(\ell)) > 0$, where $g(\ell)$ denotes the Ricci flow of $g(0)=g$.
\end{proposition}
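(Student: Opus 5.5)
We follow the same scheme as in the proofs of Propositions~\ref{prop:Hprime} and \ref{prop:Vprime}. First we compute the derivative by the chain rule,
$$
\frac{d}{d\ell}\Big\vert_{\ell=0}\tilde P_1(g(\ell))=\nabla\tilde P_1(g(0))\,g'(0),
\qquad g'(0)=(-2r_xx,-2r_yy,-2r_zz,-2r_s)^T,
$$
using the smooth expression \eqref{EQ:tilde_P_1}, valid on $\hat S$. Writing $W:=\tilde H_1\tilde V_1$, we have $\nabla\tilde P_1=\frac{1}{2\sqrt{W}}\nabla W+\nabla\bigl(4yz/s^2+3(x-y-z)/s\bigr)$. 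At $s=1$, with $\tilde H_1=H_1$ and $\tilde V_1=V_1$, the $s$-derivatives are explicit: $\partial_s\tilde H_1=3x$, and $\partial_s\tilde V_1=-V_1$ by homogeneity of degree one of $V_1$. The key simplification is that on $\hat S$ the equation $P_1=0$ gives
$$
\sqrt{H_1V_1}=3a-4b-3x,
$$
which is positive by Proposition~\ref{prop:possible_x4}. Substituting this for $\sqrt{W}$ removes the outer square root. We then insert $r_x,r_y,r_z,r_s$ from \eqref{EQ:Ricci_eigen_4param}. The resulting expression is symmetric in $y,z$, so we rewrite it in $(x,a,b)$ via Lemma~\ref{lem:convertab}; for the $y,z$ terms we only need $yr_y+zr_z$ and combinations like $y^2r_y+z^2r_z$, which are symmetric. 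The outcome is a rational function in $x,a,b$ with denominator a positive multiple of $b\,x\,(3a-4b-3x)$, and it is linear in $n$.

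Next we handle the dependence on $n$ as in Lemma~\ref{LEM:FV_negative}. We split the numerator as $nG_1+G_0$ and aim to show $G_1\ge 0$ on $\hat S$. The $n$-terms come only from $4nx/s^2$ in $r_x$ and $4ny/s^2, 4nz/s^2$ in $r_y, r_z$, so $G_1$ should be simple. If $G_1\ge0$ holds, it suffices to prove $G_0+G_1>0$, which is the case $n=1$. If instead $G_1$ changes sign, we treat both $n=1$ and the leading coefficient separately.

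Then we substitute $x=x_4=\frac{-E-\sqrt\Gamma}{2D}$, which produces an expression $P\sqrt\Gamma+R$ with $P,R$ rational in $(a,b)$; here $D<0$ on $S\subset W_2$. To show it has constant sign on $S$ we use the zero-set strategy of Lemma~\ref{LEM:x3_x4_continuous_on_T-A}. Following Remark~\ref{rem:proc}, we square to get the polynomial $P^2\Gamma-R^2$ and factor it in Maple. We then show that no factor vanishes on $S$, which is a subset of $(0,1)\times(0,\frac14)$ by Remark~\ref{rem:nbh_of_S}. For linear or quadratic factors this is done by hand. For the big factor we first check its behaviour on $\partial S$, namely on $b=\frac{a^2}{4}$ (using Lemma~\ref{LEM:SimplifyGamma}) and on $b=a-\frac34$, reducing to one-variable polynomials handled by Sturm's Theorem~\ref{T:Sturm_root}. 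We then exclude interior components. Arcs are excluded by the implicit function theorem after locating the critical points with $\operatorname{res}_b$ of the partials. Closed curves are excluded by checking the common zeros of the factor and its $a$-partial via $\operatorname{res}_b$ and Sturm on $(0,1)$. Finally we evaluate the derivative at a sample point of $S$, for instance $(\frac45,\frac17)$, to fix the sign as positive.

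The main obstacle is this last step. The squared polynomial will have high degree, and $S$ has corners at $(0,0)$ and at $(1,\frac14)$ where $\Gamma$ or the denominators degenerate, as in the $(0,0)$ analysis of Lemma~\ref{LEM:x3_x4_continuous_on_T-A}. Near $(0,0)$ we first try a local expansion of the zero set along curves $b=ca^2$ to show it stays outside $S$. If that gets messy, we fall back on direct lower bounds on a small neighbourhood of the corner.
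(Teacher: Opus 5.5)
Your setup matches the paper's: the chain rule, replacing $\sqrt{H_1V_1}$ by $3a-4b-3x$, linearity in $n$, reduction to one value of $n$, then substituting $x=x_4$ and squaring as in Remark~\ref{rem:proc}. Reducing via $G_1\ge 0$ to $n=1$, instead of the paper's $F>0$ and $-\frac12F+G\ge 0$, is a harmless variation.

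The gap is in the final step. You plan to show that no factor of $P^2\Gamma-R^2$ vanishes on $S$ and then fix the sign with one sample point. This cannot work. Up to sign and nonvanishing factors, $P^2\Gamma-R^2$ is the product of your numerator at $x_4$ with its conjugate, which is the numerator at $x_3$. The latter vanishes inside $S$ for every $n$:
\begin{itemize}
\item The $n$-coefficient ($F$ in the paper) has a factor $b$. So on the edge $b=0$ the numerator at $x_3$ reduces to $G(x_3,a,0)$, independently of $n$.
\item At $b=0$ one has $\Gamma=a^2(3a-4)^2$, $x_4=a$ and $x_3=\frac{a}{3(a-1)}$.
\item Writing $u=\frac{1}{3(a-1)}$, one gets $G(x_3,a,0)=8a^4(u-1)^2(u+1)(3u-1)$, which has a simple zero at $a=\frac23$.
\item By the implicit function theorem, this extraneous zero curve crosses $b=0$ transversally at $(\frac23,0)$. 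That is an interior point of the bottom edge of $\overline S$, so the curve enters $S$.
\end{itemize}
For the paper's choice $n=-\frac12$ this is exactly the arc of $Z_j$; indeed $j(\frac23,0)=0$.

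Two further points in your plan fail here. Your boundary check omits precisely the edge $b=0$. And the implicit function theorem cannot ``exclude arcs'': an arc joining two boundary points contains no critical points to detect.

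Consequently $S$ minus the zero set is disconnected, and one sample point does not determine the sign. What is missing is the paper's topological analysis of the big factor on $\overline S$ (Lemmas~\ref{LEM:onesingular}--\ref{LEM:Zj-Intersect-Bound}):
\begin{itemize}
\item $(0,0)$ is the only nearby singular point, and it is isolated in $Z_j\cap\overline S$.
\item There are no closed components.
\item $Z_j\cap\partial\overline S$ consists of exactly three points.
\end{itemize}
Hence $S\setminus Z_H$ has exactly two components, and the unsquared expression at $x_4$ must be evaluated at a point of each (Proposition~\ref{prop:3components-new-version}). Alternatively, you would need an argument showing that along this curve it is the $x_3$-factor, not the $x_4$-factor, that vanishes.
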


\begin{proof}
Let $g$ be an arbitrary metric in $\hat S$ and let $g(\ell)$ be its Ricci flow, i.e. $g(0)=g$. As we have done in the proofs of Propositions~\ref{prop:Hprime} and \ref{prop:Vprime}, to compute the derivative of $\tilde P_1(g(\ell))$ at $\ell=0$  we use the chain rule:
\begin{equation}\label{EQ:chain_for_P1}
        \frac{d}{d\ell}\Big\vert_{\ell=0}  \tilde P_1(g(\ell)) =\nabla  \tilde P_1(g(0)) g'(0). 
\end{equation}
From \eqref{EQ:RF_4param} we know that the Ricci flow $g(\ell)$ is the family $g_{x(\ell),y(\ell),z(\ell),s(\ell)}$ determined by $(x',y',z',s')=(-2r_x x,-2r_y y,-2r_z z, -2r_s s)$. At $\ell=0$, we have $(x,y,z,s)=(x_4,y,z,1)$, so $g'(0)=(-2r_x x_4,-2r_y y,-2r_z z,-2r_s)^T$.

The gradient $\nabla\tilde P_1=\left(\frac{\partial\tilde P_1}{\partial x}  , \frac{\partial\tilde P_1}{\partial y} , \frac{\partial\tilde P_1}{\partial z} , \frac{\partial\tilde P_1}{\partial s}   \right)$ can be computed using the expression in \eqref{EQ:tilde_P_1}. When evaluating it at a metric of the form $g_{x,y,z,1}$ we find that:
\begin{align*}
    \frac{\partial\tilde P_1}{\partial x} &= {( H_1 V_1)^{-1/2}} \left(\frac{-6 x^{2}-2 \left(y-z\right)^{2}+9 x^{2} \left(x-\frac{y}{3}-\frac{z}{3}\right)}{ x^{2}}\right)+3,  \\
    \frac{\partial\tilde P_1}{\partial y} &=  4( H_1 V_1)^{-1/2} \left(\frac{ \left(1-\frac{3 x}{4}\right) \left(y+x-z\right)}{x }\right)+4 z-3,   \\
    \frac{\partial\tilde P_1}{\partial z} &= 4( H_1 V_1)^{-1/2} \left(\frac{ \left(1-\frac{3 x}{4}\right) \left(z+x-y\right)}{x  }\right)+4 y-3,   \\
    \frac{\partial\tilde P_1}{\partial s} &=  6( H_1 V_1)^{-1/2}\left(\frac{ \left(-\frac{3 x}{2}+1\right) \left(x^{2}-\frac{2 \left(y+z\right) x}{3}-\frac{\left(y-z\right)^{2}}{3}\right)}{ x}\right)-8 y z+3 y+3 z-3 x.
\end{align*}

Now, we compute $\nabla\tilde P_1(g(0)) (-2r_x x,-2r_y y,-2r_z z,-2r_s)^T$ and substitute $r_x,r_y,r_z,r_s$ by their values from \eqref{EQ:Ricci_eigen_4param} at the metric $g_{x,y,z,1}$. This gives an expression in the variables $x,y,z$ which is symmetric in $y,z$.

We make the change of variables from $(y,z)$ to $(a,b)$ given in \eqref{EQ:change_variables} and  obtain the following expression in the variables $x,a,b$:
\begin{align*}
\frac{d}{d\ell}\Big\vert_{\ell=0} \tilde P_1(g(\ell))=\frac{1}{x^2 b\sqrt{H_1V_1}} D_n(x, a,b),
\end{align*}
where
 $$
 D_n(x, a, b)=Fn+G,
 $$
with 
   \begin{align*}
     F:=&  16x b \big(\left(-3 a+6 b+3\right) x^{3}+\left(-3 a^{2}+6 a b+6 a-12 b-3\right) x^{2}\\
   &+\left(6 a^{3}-12 a^{2} b+8 a \,b^{2}-7 a^{2}+3 a b-4 b^{2}+2 a+6 b\right) x-2 a^{3}+a^{2}+8 a b-4b\big),
   \end{align*}
 and
\begin{align*} G:=& (-48ab + 144b^2 - 24b + 24)x^4\\ &+ (128b^3 - 48a^2 + 440ab - 752b^2 + 16a - 144b)x^3\\ &+ (48a^3b - 144a^2b^2 + 128ab^3 - 360a^2 b + 656ab^2 - 512b^3 + 208ab + 224b^2 - 64b)x^2\\ &+ (48a^4 - 56a^3 b - 16a^3 - 160a^2 b + 224ab^2 + 64ab - 128b^2)x\\ &- 24a^4 + 128a^2 b - 128b^2.
\end{align*}
Note that, in obtaining theses expressions, we used the fact that $\sqrt{H_1V_1}=-4b+3a-3x$.

Recall that our goal is to show that $ \frac{d}{d\ell}\mid_{\ell=0} \tilde P_1(g(\ell)) > 0$ for any $g(0)=g\in\hat S$. This obviously holds if and only if $D_n(x, a, b)>0$ for any $(a,b)\in S$ and for $x=x_4(a,b)$. Therefore, it suffices to prove that $F(x_4,a,b)n + G(x_4,a,b) > 0$ for all $(a,b)\in S$. This will follow from the following two facts:
\begin{itemize}
    \item $F(x_4,a,b)>0$ for all $(a,b)\in S$, which is the content of Proposition~\ref{PROP:F_positive_on_S},
    \item $-\frac{1}{2} F(x_4,a,b) + G(x_4,a,b) \geq 0$ for all $(a,b)\in S$, which is done in Proposition~\ref{PROP:-12F_G_nonnegative}.
\end{itemize}
These two results imply that $G\geq \frac{1}{2} F > 0$, so clearly $Fn+G > 0$ for all $n\geq 1$.
\end{proof}

\begin{proposition}\label{PROP:F_positive_on_S}
    For any $(a,b)\in S$ it holds that $F(x_4,a,b)>0$.
\end{proposition}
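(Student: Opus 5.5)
The strategy is the same zero-set-and-sample-point argument the paper has already used for $\Gamma$, $x_1$, $x_3$ and $x_4$. First, $F=16xb\cdot K(x,a,b)$, where $K$ is the cubic in $x$ inside the big parentheses. On $S$ we have $b>0$, and $x_4\in(0,\frac{4}{3})$ by Proposition~\ref{prop:possible_x4}. So it suffices to show $K(x_4,a,b)>0$ on $S$. By Lemma~\ref{LEM:x3_x4_continuous_on_T-A}, $x_4$ is continuous on $S\subset W_2$, and $S$ is connected (Figure~\ref{fig:S}). It is therefore enough to show that the zero set of $K(x_4(a,b),a,b)$ does not meet $S$, and then check the sign at one sample point, say $\left(\frac{4}{5},\frac{1}{7}\right)$, where $x_4$ is already known explicitly from the proof of Proposition~\ref{prop:possible_x4}.

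To locate the zero set, I would eliminate $x$ rather than substitute the radical directly. On $S$, $x_4$ is a root of $Dx^2+Ex+C$, so any common zero satisfies
\[
R(a,b):=\operatorname{res}_x\bigl(K(x,a,b),\,Dx^2+Ex+C\bigr)=0 .
\]
Computing $R$ in Maple and factoring should give several factors, some of which are spurious because they come from $x_3$ instead of $x_4$; compare Remark~\ref{rem:proc}. For each factor I would show it has no zeros in $S$. By Remark~\ref{rem:nbh_of_S} it suffices to work with $a\in(0,1)$ and $b\in(0,\frac14)$, together with the constraints $a-\frac34<b\le \frac{a^2}{4}$. Linear or quadratic factors in $b$ can be handled explicitly: solve for $b$ and compare with the bounding curves $b=\frac{a^2}{4}$ and $b=a-\frac34$, as in Lemma~\ref{LEM:FH_positive}. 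For higher-degree factors I would follow the proof of Lemma~\ref{LEM:x3_x4_continuous_on_T-A}. That means checking the boundary curves of $S$ by substitution and applying Sturm's Theorem (Remark~\ref{REM:Sturm}) in one variable. Any interior components would then be ruled out through critical-point and resultant computations, for instance $\operatorname{res}_b(R_i,\partial_a R_i)$ followed by Sturm on $(0,1)$.

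If a factor of $R$ does vanish inside $S$, the resultant only says that $x_3$ or $x_4$ is a root of $K$ there. I would then verify directly, at a point on that curve, that $K(x_4)\neq0$, which identifies the curve as coming from $x_3$. Alternatively I would cut $S$ along that curve and sample each piece separately.

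The main obstacle I expect is this resultant step. $K$ is cubic in $x$ with coefficients of degree up to $3$ in $(a,b)$, so $R$ will be a high-degree bivariate polynomial. Its factors may pass close to the corner $(1,\frac14)$ or the origin, where $S$ is thin and $x_4$ degenerates. There I would use a local expansion of $x_4$, as was done near $(0,0)$ for $\Gamma$. A fallback is the bound $b\le a^2/4$: rewrite $K(x_4,a,b)$ along the segments $b=ta^2/4$ with $t\in(0,1]$ and run Sturm in $a$ for rational $t$, combined with a monotonicity argument in $t$.
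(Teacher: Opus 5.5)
You take the same route as the paper, and most of it works. The elimination $R=\operatorname{res}_x(K,Dx^2+Ex+C)=D^3K(x_3)K(x_4)$ is, up to harmless factors, what the squaring procedure of Remark~\ref{rem:proc} produces. The paper gets the factorization $b^2(a+1)(a-b-1)(a-2b-1)^2(a^2-4b)(4a^2+8ab-16b^2-4a-16b-1)$, so the high-degree difficulties you anticipate do not arise. Your solve-for-$b$ treatment handles the conic, whose roots in $b$ are negative or non-real for $a\in(0,1)$.

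\textbf{The gap is the factor $a^2-4b$.} This factor is forced and it does meet $S$. On the arc $b=\frac{a^2}{4}$, $0<a<1$, which belongs to $S$, one has $C=0$, so $x_3=0$. Also $K(0,a,b)=-(2a-1)(a^2-4b)$ vanishes there. Hence $R\equiv 0$ on a piece of $S$ where you need the strict inequality. Strictness there is required, since Proposition~\ref{prop:Pprime} uses $G\geq\frac12F>0$ on all of $S$.

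\textbf{Why your fallback does not close it.} Checking $K(x_4)\neq 0$ at one point of the arc only shows that $K(x_4)$ is not identically zero along it, i.e.\ that the arc comes from $x_3$. It does not rule out isolated zeros of $K(x_4)$ on the arc, and $R$ says nothing about those because it already vanishes there. Cutting $S$ and sampling the pieces gives only $K(x_4)\geq 0$ on the arc, by continuity.

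\textbf{How to repair it.} You need an argument specific to the arc. The paper's is direct: there $x_4=-E/D=-\frac{a(a^3-6a^2+9a-8)}{6(a^2-2a+2)}$, and substituting gives
\[
F\bigl(x_4,a,\tfrac{a^2}{4}\bigr)=\frac{a^{6}(a^{3}-6a^{2}+9a-8)^{2}(a+1)(a^{3}-3a^{2}+3a+1)(a-2)^{2}}{216\,(a^{2}-2a+2)^{3}},
\]
which is positive for $a\in(0,1)$.

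A multiplicity argument would also work. Near the arc $\Gamma>0$, so $K(x_3)=(a^2-4b)h$ with $h$ analytic. The factorization shows that $a^2-4b$ divides $R$ only to the first power and that the cofactor has no zeros on the arc. Then $D^3hK(x_4)$ equals that cofactor, so $K(x_4)$ cannot vanish there.

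With this step added, the rest of your plan goes through.
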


\begin{proof}
We will show that the zero set of $F(x_4,a,b)$ does not intersect the region $S$, so its sign is constant on $S$. We will also show that $F(x_4,a,b)$ is positive on the boundary curve $a^2-4b=0$ of $S$. It will follow that $F(x_4,a,b)$ is positive on $S$.

Let us consider the zero set of $F(x_4,a,b)$. In order to simplify the computations we consider instead the zero set of $\frac{F(x_4,a,b)}{16x_4b}$, which is the same since both $x_4 > 0$ and $b>0$ on $S$. To do so, we substitute $x=x_4$ in the expression of $F(x,a,b)$ and follow the strategy of Remark~\ref{rem:proc} for $\frac{F(x_4,a,b)}{16x_4b}=0$. After rearranging the expression, we find that the zero set of $\frac{F(x_4,a,b)}{16x_4 b}$ is a subset of the zero set of the following equation: 
$$
  108 b^{2} \left(a+1\right) \left(-b-1+a\right) \left(a-2 b-1\right)^{2}\left(a^{2}-4 b\right) \left(4 a^{2}+8 a b-16 b^{2}-4 a-16 b-1\right) =0.
$$
For all the factors except the last two, it is clear that their zero set does not intersect $S$.  Thus, we need only analyze the last two factors.

We begin with the zero set of the factor $(a^2-4b)$. We will show that, in fact, if $a^2 - 4b = 0$ and $(a,b)\in S$, then $F(x_4,a,b)>0$. As we have computed in the proof of Proposition~\ref{prop:possible_x4}, along the boundary curve $b=\frac{a^2}{4}$ we have $x_4 = -\frac{a(a^3-6a^2+9a-8)}{6(a^2-2a+2)}$.

Substituting this into $F(x_4,a,b)$ and then substituting $b = \frac{a^2}{4}$ into the resulting expression, we obtain 
\begin{equation}\label{eqn:boundarypos} 
\frac{a^{6} \left(a^{3}-6 a^{2}+9 a-8\right)^{2} \left(a+1\right) \left(a^{3}-3 a^{2}+3 a+1\right) \left(a-2\right)^{2}}{216 \left(a^{2}-2 a+2\right)^{3}}.
\end{equation}
The factor $(a^{2}-2 a+2)$ in the denominator has negative discriminant, so it is positive for all $a$.  Therefore, the sign of \eqref{eqn:boundarypos} is determined by the sign of the numerator. Since $a\in (0,1)$ on $S$, the factors $a^6$, $(a+1)$ and $(a-2)^2$ are positive. Moreover,  from the proof of Lemma \ref{LEM:SimplifyGamma},  we know that $(a^3-6a^2 + 9a-8)< 0$ on $(0,1)$, so $(a^3-6a^2 + 9a-8)^2 > 0$.  Finally, the factor $(a^3 -3a^2 + 3a + 1)$ has the value $1> 0$ at $a=0$ and derivative $3(a-1)^2 \geq 0$, so it is also positive on $(0,1)$.  Thus, \eqref{eqn:boundarypos} is positive on $S$, as claimed.

To complete the proof that $F>0$ on $S$, it remains to investigate the last factor $(4a^2 + 8ab - 16b^2 -4a-16b-1)$. We shall show that its zero set does not intersect $S$. Note that the graph of $4 a^{2}+8 a b-16 b^{2}-4 a-16 b-1=0$ is a hyperbola.  In particular, no component of its graph is contained in any compact set.  It follows that if the graph intersects $S$, it must intersect the boundary.  We now show that, in fact, it does not intersect the boundary. Observe that $\partial S$ consists of three arcs with equations  $b=\frac{a^2}{4}$ with $a\in [0,1]$, $b=0$ with $a\in \left[0,\frac{3}{4}\right]$, and $b=a-\frac{3}{4}$ with $a\in\left[\frac{3}{4},1\right]$.

First, substituting $b= \frac{a^2}{4}$ into $4 a^{2}+8 a b-16 b^{2}-4 a-16 b-1 = 0$, we obtain $-(a+1)(a^3-3a^2+3a+1)=0$.  This has no solutions on $[0,1]$, so there the graph does not intersect the boundary of $S$ along this component.

Second, substituting $b= 0$ into $4 a^{2}+8 a b-16 b^{2}-4 a-16 b-1 = 0$, we find $4a^2 - 4a - 1 = 0$, which has no solutions on $\left[0,\frac{3}{4}\right]$.

Finally, substituting $b = a-\frac{3}{4}$ into $4 a^{2}+8 a b-16 b^{2}-4 a-16 b-1 = 0$, we obtain $-2(a+1)(2a-1)= 0$.  This has no solutions in $\left[\frac{3}{4},1\right]$.
\end{proof}

To complete the proof of Proposition~\ref{prop:Pprime} it remains to show the following result. 

\begin{proposition}\label{PROP:-12F_G_nonnegative}  
For any $(a,b)\in S$ it holds that $-\frac{1}{2} F(x_4,a,b) + G(x_4,a,b) \geq 0$.
\end{proposition}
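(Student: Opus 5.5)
I will follow the same template as in the proof of Proposition~\ref{PROP:F_positive_on_S}. Set $K(a,b):=-\frac{1}{2}F(x_4,a,b)+G(x_4,a,b)$. This is a continuous function on $S$, because $x_4$ is continuous on $T\cap\{D\neq 0\}\supset S$ by Lemma~\ref{LEM:x3_x4_continuous_on_T-A}. The plan has three steps: locate the zero set of $K$ inside the closure of $S$, check the sign of $K$ on the boundary arcs, and then fix the sign by evaluating at a sample point. The statement is only $\geq 0$, which suggests that $K$ vanishes somewhere on $\overline S$. The most likely place is the corner $(1,\frac14)$ or the boundary arc $b=a-\frac34$. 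So I expect a strict inequality in the interior, with possible equality attained only on $\partial S\setminus S$ or at isolated points.

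\textbf{Step 1: the zero set of $K$.} I write $x_4=\frac{-E-\sqrt{\Gamma}}{2D}$. Since $D<0$ on $S\subset W_2$, I clear the denominator by a power of $D$. Then $K(x_4,a,b)=0$ takes the form $P\sqrt{\Gamma}+R=0$ with $P,R\in\mathbb{Z}[a,b]$, so I apply the procedure of Remark~\ref{rem:proc} to get $P^2\Gamma-R^2=0$. I would then factor this in Maple. I expect to see the familiar factors $b$, $a+1$, $a-2b-1$, $a^2-4b$ and $-4b-3+4a$, all of which either miss $S$ or lie on $\partial S$. I also expect one or two genuinely new irreducible factors $Q(a,b)$.

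\textbf{Step 2: the new factors.} For each new factor $Q$ I must show that $\{Q=0\}$ does not meet the interior of $S$. I would first try the argument used for $\Gamma$ in Lemma~\ref{LEM:x3_x4_continuous_on_T-A}. Restrict $Q$ to each of the three boundary arcs $b=\frac{a^2}{4}$, $b=0$ and $b=a-\frac34$, obtaining univariate polynomials. Use Sturm's theorem (Theorem~\ref{T:Sturm_root}) to count their roots on $[0,1]$, $[0,\frac34]$ and $[\frac34,1]$ respectively. This shows that any component of $\{Q=0\}$ inside $S$ would have to be a closed curve. Such curves are then excluded by computing $\operatorname{res}_b(Q,Q_a)$ and $\operatorname{res}_b(Q_a,Q_b)$ and showing with Sturm that they have no roots with $a\in(0,1)$ (cf.\ Remark~\ref{rem:nbh_of_S}).

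If Sturm does detect roots on an arc, I would check whether they are extraneous roots coming from the squaring. To do this I evaluate $K$ directly at those points, which are explicit algebraic numbers. On the arc $b=\frac{a^2}{4}$ this is easy, since there $x_4=-\frac{a(a^3-6a^2+9a-8)}{6(a^2-2a+2)}$ is rational, so $K$ becomes a rational function of $a$. I would factor it and show that it is nonnegative on $(0,1)$, as in \eqref{eqn:boundarypos}.

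\textbf{Step 3: sign.} Once the zero set is known to avoid the interior of $S$, the sign of $K$ is constant there. I would evaluate $K$ at the sample point $(\frac45,\frac17)$, where $x_4$ is computable and $-4b+3a-3x_4=\frac{4}{35}$, and confirm that $K>0$. Continuity then gives $K\geq 0$ on the boundary arc $b=\frac{a^2}{4}$, which belongs to $S$.

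\textbf{Main obstacle.} I expect Step 2 to be the hardest part. The polynomial $P^2\Gamma-R^2$ will have high degree, and the new factor may really meet $\partial S$, probably near $(1,\frac14)$, where $K$ should be tangent to zero. If the Sturm counts on the arcs are not clean, my fallback is to split $S$ into pieces by vertical lines $a=c$. On each piece I would bound $K$ from below using the monotonicity of $x_4$ in $b$ together with crude interval estimates of $F$ and $G$ in terms of $a$ and $b$.
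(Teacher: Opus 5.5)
\textbf{There is a genuine gap: Step~2 aims to prove something false, and Step~3 depends on it.}

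Your template (square via Remark~\ref{rem:proc}, factor, locate the zero set, fix the sign by sample points) is the paper's. The trouble is what happens after squaring. One gets $H(a,b)=1728\,b^{2}(a-b-1)(a^{2}-4b)(a-2b-1)^{4}\,j(a,b)$, and the new factor $j$ changes sign inside $S$. On the edge $b=a-\frac34$ the paper computes $j(\frac{19}{25},\frac{1}{100})>0$ and $j(\frac{9}{10},\frac{3}{20})<0$. In fact $Z_j\cap\overline{S}$ consists of two pieces:
\begin{itemize}
\item the singular point $(0,0)$;
\item an arc from $(\frac23,0)$ (one checks $j(\frac23,0)=0$) to a point of $b=a-\frac34$ with $\frac{19}{25}<a<\frac{9}{10}$.
\end{itemize}
This arc cuts the corner $(\frac34,0)$ off from the rest of $S$. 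So no combination of Sturm counts on the arcs and resultants can show that $\{j=0\}$ misses the interior of $S$.

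Your fallbacks do not close this. Checking $K$ at boundary roots says nothing about the interior arc. Worse, on the edge $b=0$ one has $\Gamma=a^2(3a-4)^2$, hence $x_4=a$, hence $F=0$ (it carries the factor $b$) and $G(a,a,0)=0$. So $K$ extends continuously by $0$ along that whole edge, and $(\frac23,0)$ will look like a genuine zero. The same fact makes crude interval bounds delicate near $b=0$: the paper's value of $K$ at $(\frac{19}{25},\frac{1}{100})$ is only about $1.4\cdot 10^{-3}$.

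Consequently one sample point cannot suffice. It only certifies the sign on the component of $S\setminus Z_H$ that contains it. What is missing is a count of these components, which the paper obtains as follows:
\begin{itemize}
\item $(0,0)$ is the only singular point of $Z_j$ near $\overline{S}$ (Lemma~\ref{LEM:onesingular}, via $\operatorname{res}_a(j,j_a)$ and Sturm).
\item $(0,0)$ is isolated in $Z_j\cap\overline{S}$, because the quadratic part $j_2=-64(2a+3b)^2$ dominates the higher-order terms on the closed first quadrant (Lemma~\ref{LEM:isolated}).
\item There are no closed loops, since a topmost point of a loop would be a common zero of $j$ and $j_a$ with $b>0$ (Lemma~\ref{LEM:nocircles}).
\item $Z_j$ meets $\partial\overline{S}$ in exactly three points (Lemma~\ref{LEM:Zj-Intersect-Bound}).
\end{itemize}
Hence $S\setminus Z_H$ has exactly two components. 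The paper then evaluates $K$ at a point in the closure of each and finds both values positive.

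The alternative would be to prove directly that $K$ does not vanish along the interior arc of $Z_j$, i.e.\ that the arc is extraneous for the branch $x_4$. Your plan does not address this. Not having settled it is exactly why the paper states only $\geq 0$; it is not because of behaviour at $(1,\frac14)$, which is not in $S$ anyway.
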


The proof will make use of some auxiliary results. As usual, we shall determine the zero set $-\frac{1}{2} F(x_4,a,b) + G(x_4,a,b) = 0$. Substituting $x = x_4$ into $-\frac{1}{2}F + G$ and applying Remark~\ref{rem:proc} to $-\frac{1}{2}F + G=0$, we obtain an equation
$$
H(a,b)=0
$$
where
\begin{align*}
H(a, b)=1728 \, b^{2} (-b - 1 + a) (a^{2} - 4b)(a - 2b - 1)^{4}j(a, b), 
\end{align*}
with 
\begin{align*}
j(a, b)=&540 a^{8} b - 4608 a^{7} b^{2} + 15984 a^{6} b^{3} - 28800 a^{5} b^{4} + 28416 a^{4} b^{5} - 14592 a^{3} b^{6}\\ 
& + 3072 a^{2} b^{7} - 432 a^{8}+ 252 a^{7} b + 10008 a^{6} b^{2} - 33504 a^{5} b^{3} + 38304 a^{4} b^{4}\\
&- 5376 a^{3} b^{5} - 22528 a^{2} b^{6} + 17408 a b^{7} - 4096 b^{8} + 2520 a^{7} - 7083 a^{6} b\\
&- 5424 a^{5} b^{2} + 24036 a^{4} b^{3} + 8016 a^{3} b^{4} - 57088 a^{2} b^{5} + 47104 a b^{6} - 12288 b^{7} \\
&- 5676 a^{6} + 10251 a^{5} b + 18612 a^{4} b^{2}
- 29984 a^{3} b^{3} - 27888 a^{2} b^{4} + 50240 a b^{5}\\
&- 16384 b^{6} + 6368 a^{5} + 376 a^{4} b - 35842 a^{3} b^{2} + 7504 a^{2} b^{3} + 48128 a b^{4} \\
& - 25728 b^{5} - 3920 a^{4} - 7456 a^{3} b + 14692 a^{2} b^{2} + 29040 a b^{3} - 29632 b^{4}\\
&+ 1408 a^{3} + 3840 a^{2} b + 2272 a b^{2} - 12768 b^{3} - 256 a^{2} - 768 a b - 576 b^{2}. 
\end{align*}
To prove Proposition \ref{PROP:-12F_G_nonnegative}, we need the following result. To state it, let $Z_H$ and $Z_j$ denote the zero sets of $H$ and $j$, respectively. 

\begin{proposition}\label{prop:3components-new-version}  
The set $S\setminus Z_H$ has precisely two connected components.  The points $\left(\frac{19}{25},\frac{1}{100}\right)$ and $\left(\frac{9}{10}, \frac{3}{20}\right)$ lie on the closures of the two different components.
\end{proposition}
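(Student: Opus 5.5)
First I strip off the factors of $H$ that cannot vanish in $S$. By Remark~\ref{rem:nbh_of_S}, every $(a,b)\in S$ has $a\in(0,1)$ and $b\in(0,\frac14)$. So $b^2>0$ and $-b-1+a<0$. Also $a-2b-1<0$, hence $(a-2b-1)^4>0$. The factor $a^2-4b$ vanishes only on the upper boundary arc $b=\frac{a^2}{4}$ of $S$. It follows that $Z_H\cap S$ is the union of that arc with $Z_j\cap S$. Since $S\setminus Z_H$ does not see this arc at all, the statement reduces to describing $Z_j\cap S$. The goal is to show that $Z_j\cap S$ is a single arc, smooth up to its ends, joining two distinct points of $\partial S$ and otherwise disjoint from $\partial S$. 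Then $S\setminus Z_H$ has exactly two components, by the Jordan curve theorem applied to the closed topological disk $\overline{S}$.

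The first step is to locate where $Z_j$ meets the boundary. I restrict $j$ to each of the three arcs of $\partial S$, as in the proof of Proposition~\ref{PROP:F_positive_on_S}: $b=0$ with $a\in[0,\frac34]$; $b=a-\frac34$ with $a\in[\frac34,1]$; and $b=\frac{a^2}{4}$ with $a\in[0,1]$. Each restriction is a single-variable polynomial, and I count its roots on the relevant interval with Sturm's Theorem (Remark~\ref{REM:Sturm}). At $b=0$ we get $j(a,0)=a^2(-432a^6+2520a^5-5676a^4+6368a^3-3920a^2+1408a-256)$. I expect exactly two boundary crossings in total, for instance one on $b=0$ and one on $b=a-\frac34$, possibly together with the corner $(0,0)$. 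The corner needs a separate local analysis, similar to the one done for $\Gamma$ near $(0,0)$ in Lemma~\ref{LEM:x3_x4_continuous_on_T-A}, to show that no branch of $Z_j$ enters $S$ there.

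The second step is to show that $Z_j$ is a smooth curve inside $S$. I compute the resultants $\operatorname{res}_b(j,j_a)$ and $\operatorname{res}_b(j,j_b)$, or $\operatorname{res}_b(j_a,j_b)$, and use Sturm sequences to show there are no singular points of $Z_j$ with $a\in(0,1)$ and $b\in(0,\frac14)$. If a stray root appears, I substitute it back and check that the corresponding $b$ lies outside $(0,\frac14)$, exactly as in Lemma~\ref{LEM:x1_x42_continuous_on_T-A}. By the implicit function theorem, $Z_j\cap S$ is then an embedded $1$-manifold. Its arc components must end at the two boundary points found above. Closed components are excluded as in Lemma~\ref{LEM:x3_x4_continuous_on_T-A}: a closed curve would have a point with $j=j_a=0$, and the Sturm count on $\operatorname{res}_b(j,j_a)$ rules this out. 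The same count also excludes a second arc. Hence $Z_j\cap S$ is a single cross-cut, and $S\setminus Z_H$ has exactly two components.

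Finally, I evaluate $j$ at $\left(\frac{19}{25},\frac1{100}\right)$ and $\left(\frac9{10},\frac3{20}\right)$. Both points lie in $S$, and I expect $j$ to take opposite nonzero signs there. That places them in different components, and in particular in the closures of different components. The main obstacle is the second step: $j$ has degree $9$ and its resultants will be large. If the Sturm counts on all of $(0,1)$ are inconclusive, I would first subdivide the $a$-interval and compute resultants separately on each piece.
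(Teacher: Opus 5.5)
Your route is the paper's: discard the factors of $H$ that cannot vanish on $S$, locate $Z_j\cap\partial\overline S$ by Sturm counts on the three edges, use resultants to show that $j=j_a=0$ is impossible with $0<b<\frac14$ (no singular points and no horizontal tangents, hence no closed curves and no second arc), and finish with the fact that a cross-cut separates a disk. Two steps, however, do not work as you state them.

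\textbf{The corner $(0,0)$.} You propose to treat it like $\Gamma$ in Lemma~\ref{LEM:x3_x4_continuous_on_T-A}. That argument rested on $\Gamma_b(0,0)\neq 0$ and the implicit function theorem. Here $j$ has no terms of degree less than $2$, so $(0,0)$ is a singular point of $Z_j$ and several branches can leave it. What works (Lemma~\ref{LEM:isolated}) is the following. The quadratic part is $j_2=-64(2a+3b)^2$, so $|j_2|\ge 256(a^2+b^2)$ on the closed first quadrant. Meanwhile $|j-j_2|\le M(a^2+b^2)^{3/2}$ near the origin, for some constant $M$. Hence $Z_j\cap\overline S=\{(0,0)\}$ near $(0,0)$. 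This is essential for your endpoint count: two arcs from the origin to the two edge points would, for instance, cut $S$ into three pieces. A practical remark: $\operatorname{res}_b(j,j_a)$ may well have roots $a\in(0,1)$ coming from points of $Z_j$ below the $a$-axis. The paper eliminates $a$ instead and checks that $\operatorname{res}_a(j,j_a)$ has $b=0$ as its only root in $(-\frac1{100},\frac14]$. This gives exactly the statement you need, with no back-substitution.

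\textbf{The last step.} Neither test point lies in $S$. Both satisfy $b=a-\frac34$ exactly, so they sit on the excluded edge of $S$; this is why the statement speaks of closures. So ``that places them in different components'' is false as written. What is true is that, by continuity, points of $S$ near them have $j>0$ and $j<0$ respectively. Hence they lie in the closures of components on which $j$ has opposite signs.

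More importantly, this sign change is not just a labelling device: it is the step that proves $Z_j\cap S\neq\emptyset$. Your Sturm counts only show that $Z_j$ \emph{meets} each of the two edges once, not that it enters $S$ there. If both contacts were tangential from outside, $Z_j\cap S$ would be empty and $S\setminus Z_H$ would be connected. So ``$Z_j\cap S$ is a single cross-cut'' is unjustified until you use the opposite signs to exclude this alternative, exactly as the paper does. After that, your no-horizontal-tangent argument, together with the corner analysis, forces the unique arc to join the two edge points, and the separation argument goes through.
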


Assuming this proposition, the proof of Proposition \ref{PROP:-12F_G_nonnegative} is straightforward. 

\begin{proof}[Proof of Proposition \ref{PROP:-12F_G_nonnegative} assuming Proposition \ref{prop:3components-new-version}]
     The zero set of $-\frac{1}{2} F+ G$ is a subset of the zero set of $H$, so $-\frac{1}{2}F + G$ has a constant sign on each component of $S\setminus Z_H$. We substitute each of the two points of Proposition \ref{prop:3components-new-version} into $-\frac{1}{2} F(x_4,a,b) + G(x_4,a,b)$, and we obtain the positive values $\frac{9294208}{6591796875}$ and $\frac{87381}{250000}$, respectively.  By continuity, it follows that $-\frac{1}{2}F + G$ is positive on $S\setminus Z_H$, and hence non-negative on $S$.
\end{proof}

We now work towards proving Proposition~\ref{prop:3components-new-version}. The set $Z_H$ is the union of the zero sets of each of the factors.  The zero sets of the factors $b^2, (-b-1+a)$, $(a^2 - 4b)^2$ and $(a-2b-1)^4$ are disjoint from $S$ or only intersect along the boundary, so removing them does not change the number of components. Thus we need to study the intersection of $Z_j$ and $S$, whose analysis is significantly more difficult. This will be done rigorously in a series of lemmas, namely Lemmas~\ref{LEM:onesingular}, \ref{LEM:isolated}, \ref{LEM:nocircles} and \ref{LEM:Zj-Intersect-Bound}, after which we prove Proposition~\ref{prop:3components-new-version}. To provide intuition, we include a numerically estimated plot of $Z_j$ and $S$ in Figure \ref{fig:Zj}.

To state the next result, note that $Z_j$ is a real algebraic curve and recall that a point is called \textit{singular} if the partial derivatives $j_a$ and $j_b$ both vanish. By the implicit function theorem, a non-singular point is a (smooth) manifold point of $Z_j$.  Thus, in light of Remark \ref{rem:nbh_of_S}, the following result asserts that $Z_j$ is a smooth manifold in an open neighborhood of $S$, except at $(0,0)$.

\begin{figure}
\begin{center}

\begin{tikzpicture}
\begin{axis}[
    axis lines=middle,
    xlabel={$a$},
    ylabel={$b$},
    xmin=-0.1, xmax=1.1,
    ymin=-0.15, ymax=0.3,
    xtick={0, 0.2, 0.4, 0.6, 0.8, 1.0},
    ytick={-0.1, 0, 0.1, 0.2},
    axis line style={->},
    axis on top,
    grid=none
]
    % 1. Upper Boundary: Parabola
    \addplot[name path=para, thick, domain=0:1, samples=100] {x^2/4};
    
    % 2. Bottom Boundary of R
    \path[name path=bot] (axis cs:0,0) -- (axis cs:0.75,0) -- (axis cs:1, 0.25);
    
    % 3. Shaded Region R
    \addplot[teal!20] fill between[of=para and bot];
    
    % 4. Explicit Boundary Lines of R
    \draw[thick] (axis cs:0,0) -- (axis cs:0.75,0) -- (axis cs:1, 0.25);
    
    % 5. Reference the CSV directly
    % [skip coords between index={start}{end}] can be used to sample data
    % or you can use 'restrict x to domain' to filter values
    \addplot[
        thick, 
        color=black,
        restrict x to domain=-0.1:0.9, % Filters data within TikZ
        each nth point=100           % Reduces memory usage by plotting every 10th point
    ] table [x=a, y=b, col sep=comma] {curve_data.csv};

    % Region Label
    \node at (axis cs:0.4, 0.1) {$S$};
    \node at (axis cs:0.4, -0.075) {$Z_j$};

\end{axis}
\end{tikzpicture}
\caption{The region $S$ with $j=0$}\label{fig:Zj}
\end{center}
\end{figure}

\begin{lemma}\label{LEM:onesingular} 
There is a number $\delta>0$ for which $(0,0)$ is the only singular point of $Z_j$ with $(a,b)\in \left(-\delta \vphantom{\frac{1}{4}}, 1+\delta\right)\times  \left(-\delta, \frac{1}{4} + \delta\right)$. 
\end{lemma}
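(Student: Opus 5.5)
We need to show that the only common zero of $j$, $j_a$ and $j_b$ in a small neighbourhood of the closed rectangle $[0,1]\times[0,\frac14]$ is the origin. Since a singular point of $Z_j$ is a common zero of $j$, $j_a$, $j_b$, we use resultants and Sturm sequences, as in the proofs of Lemmas~\ref{LEM:x1_x42_continuous_on_T-A} and \ref{LEM:x3_x4_continuous_on_T-A}.

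\textbf{Step 1: candidate $a$-values.} Compute with Maple the resultants
\[
R_1(a)=\operatorname{res}_b(j,j_a)\quad\text{and}\quad R_2(a)=\operatorname{res}_b(j,j_b),
\]
which lie in $\mathbb{Z}[a]$. Any singular point $(a_0,b_0)$ has $a_0$ a common root of $R_1$ and $R_2$, hence a root of $\gcd(R_1,R_2)$. Factor this gcd over $\mathbb{Q}$. We expect a power of $a$, coming from $(0,0)$, together with other factors. For each factor other than $a$, apply Sturm's Theorem (via Remark~\ref{REM:Sturm}) on a slightly enlarged interval, say $[-\frac{1}{100},\,1+\frac{1}{100}]$. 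The aim is to show it has no roots there. Because the set of real roots is finite, any factor without roots in $[0,1]$ also has none in some interval $(-\delta,1+\delta)$. This is the source of $\delta$: take it smaller than the distance from $[0,1]$ to the nearest real root of these factors.

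\textbf{Step 2: fallback if a factor has roots in the interval.} If some factor does have roots in $[-\delta,1+\delta]$, repeat the elimination in the other variable: compute $\operatorname{res}_a(j,j_a)$ and $\operatorname{res}_a(j,j_b)$ and take the analogous gcd in $\mathbb{Z}[b]$. Use Sturm's Theorem to locate its roots in $(-\delta,\frac14+\delta)$, and isolate the candidate $a$-roots in rational intervals. A pair $(a_0,b_0)$ drawn from the two finite candidate lists is a genuine singular point only if $j$, $j_a$ and $j_b$ all vanish there. Exclude each remaining pair by substituting $a=a_0$ into $j_a$ or $j_b$ and checking that the resulting univariate polynomial has no root near $b_0$. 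This is the same pattern used for $\Delta$ in Lemma~\ref{LEM:x1_x42_continuous_on_T-A}. If needed, use interval arithmetic to certify that $j$, $j_a$ or $j_b$ is nonzero on a small box around the pair.

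\textbf{Step 3: the origin.} Check directly that $j(0,0)=0$, $j_a(0,0)=0$ and $j_b(0,0)=0$. Every monomial of $j$ has degree at least $2$; the lowest-order terms are $-256a^2-768ab-576b^2=-64(2a+3b)^2$. So $(0,0)$ is indeed singular. Also check that the factor $a$ of the gcd contributes no singular point other than the origin: set $a=0$ and compute the gcd of $j(0,b)$, $j_a(0,b)$ and $j_b(0,b)$. Its only root in $(-\delta,\frac14+\delta)$ should be $b=0$.

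\textbf{Main obstacle.} The resultants will have very large degree (j has total degree $9$) and huge coefficients, and the gcd may contain repeated or high-degree factors. The hardest step is certifying rigorously, via Sturm sequences, that these factors have no roots in the enlarged window. If $\operatorname{res}_b$ is too unwieldy, first try taking the gcd of two resultants. If that is still too large, substitute $b=a\,u$ to blow up the origin, which removes the $(0,0)$ contribution before eliminating.
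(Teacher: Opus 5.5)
Your elimination scheme is sound. Any singular point in the window has its $a$-coordinate among the common roots of $\operatorname{res}_b(j,j_a)$ and $\operatorname{res}_b(j,j_b)$, you treat the line $a=0$ separately, and your pairing fallback covers whatever is left. So the lemma follows once the computations are actually carried out.

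The paper organizes the elimination differently and more cheaply. It projects onto the $b$-axis and uses only the pair $(j,j_a)$. Sturm's Theorem applied to $\operatorname{res}_a(j,j_a)$ on $(-\frac{1}{100},\frac14]$ shows that its only root there is $b=0$. Hence every common zero of $j$ and $j_a$ in the window lies on the $a$-axis. Sturm's Theorem applied to the single univariate polynomial $j_b(a,0)$ on $(-\frac{1}{100},1]$ then leaves only $a=0$. This needs no second resultant, no gcd of high-degree resultants, no pairing of algebraic candidates and no interval arithmetic.

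The direction of projection is what makes this work. The lowest-order part of $j$ is $-64(2a+3b)^2$, so $Z_j$ leaves the origin along $2a+3b=0$, into $b<0$. Judging from Figure~\ref{fig:Zj}, it then turns back up. It therefore appears to have a horizontal tangency below the $a$-axis with $a$-coordinate in $(0,1)$. That point would make $\operatorname{res}_b(j,j_a)$ by itself insufficient in your $a$-window, which is why your route needs the gcd with $\operatorname{res}_b(j,j_b)$. In the paper's projection, that point's $b$-coordinate lies outside $(-\frac{1}{100},\frac14]$.

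The paper's route also yields a by-product that yours does not: inside the window, $j=j_a=0$ forces $b=0$. This is exactly what is quoted in the proof of Lemma~\ref{LEM:nocircles} to exclude closed components. With your approach, that lemma would need an extra computation.

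Finally, as written your proposal leaves the outcome of each computation open (``we expect''). To count as a proof, it must state which branch actually occurs and what the Sturm counts are, as the paper does.
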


\begin{proof}
The singular points are exactly the common solutions to $j = 0$, $j_a = 0$ and $j_b = 0$. Since the lowest degree term of $j$ has total degree $2$, it follows that $(0,0)$ is a singular point.

We now show that there are no other singular points in the appropriate region. To do so, let $(a_0,b_0)\in \left[0\vphantom{\frac{1}{4}},1\right]\times \left[0,\frac{1}{4}\right]$ be a common zero of $j$ and $j_a$. From Section~\ref{SEC:resultant_Sturm} we know that the resultant $\operatorname{res}_a(j,j_a)\in\mathbb Z [b]$ must vanish at $b_0$. Applying Sturm's Theorem~\ref{T:Sturm_root} (see also Remark~\ref{REM:Sturm}) to $\operatorname{res}_a(j,j_a)$ on the slightly larger interval $\left(-\frac{1}{100}, \frac{1}{4}\right]$, we find that there is exactly one value of $b$ in this interval for which $\operatorname{res}_a(j,j_a)$ vanishes. Since we know that $b=0$ is such a zero (with $a=0$), it follows that $b_0$ must be equal to $0$.  By continuity, it follows that there is a number $\delta_b \in \left(0,\frac{1}{100}\right)$ with the property that, on the interval $\left[-\delta_b, \frac{1}{4} + \delta_b\right]$, the only common zero $(a_0,b_0)$ of $j$ and $j_a$ has $b_0=0$, so it is of the form $(a_0,0)$.

Next, suppose that $(a_0,0)$ with $a_0\in [0,1]$ satisfies $j_b=0$. Applying Sturm's Theorem to $j_b(a,0)$, which is a single-variable polynomial in $a$, shows that it has exactly one real root on the larger interval $\left(-\frac{1}{100},1\right]$. Since $a=0$ is one such zero, it must be the only one, so $a_0=0$.  Again, by continuity, there is a number $\delta_a\in \left(0,\frac{1}{100}\right)$ with the property that on the interval $[-\delta_a, 1 + \delta_a]$  the only zero $(a_0,0)$ of $j_b$ is $(0,0)$. In particular, the only common zero $(a_0,0)$ to $j$, $j_a$ and $j_b$ is $(0,0)$.

Taking $\delta = \min\{\delta_a,\delta_b\}$ completes the proof.
\end{proof}

We now analyze $Z_j$ in a  neighborhood of $(0,0)$.

\begin{lemma}\label{LEM:isolated} 
There is a neighborhood $U$ of $(0,0)$ for which the only point of $Z_j\cap \overline{S}\cap U$  is $(0,0)$.
\end{lemma}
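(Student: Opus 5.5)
The plan is to show directly that $j<0$ on $(\overline{S}\cap U)\setminus\{(0,0)\}$ for a suitable neighborhood $U$ of the origin. This avoids any analysis of the local branches of the singular curve $Z_j$. The key observation is that the lowest-order part of $j$ is negative definite along the directions that $\overline{S}$ occupies near $(0,0)$.

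First I will collect the degree-two terms of $j$:
$$
-256a^2-768ab-576b^2=-64(2a+3b)^2 .
$$
So the tangent cone of $Z_j$ at the origin is the single line $2a+3b=0$, which meets the closed quadrant $\{a,b\geq 0\}$ only at $(0,0)$. Near the origin, $\overline{S}$ is contained in $\{0\leq b\leq \tfrac{a^2}{4},\ a\geq 0\}$; this follows from the definition \eqref{EQ:set_S} of $S$ and Remark~\ref{rem:nbh_of_S}. On this set $(2a+3b)^2\geq 4a^2$, so the quadratic part satisfies $-64(2a+3b)^2\leq -256a^2$.

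Next I will bound the remaining terms. Write $j=-64(2a+3b)^2+R(a,b)$, where every monomial of $R$ has total degree at least $3$. For $(a,b)\in\overline{S}$ with $0\leq a\leq 1$ we have $0\leq b\leq a^2/4\leq a$. Hence each monomial $a^pb^q$ with $p+q\geq 3$ satisfies $|a^pb^q|\leq a^{p+q}\leq a^3$. Summing the absolute values of the finitely many coefficients of $j$ gives a constant $K$ with $|R(a,b)|\leq Ka^3$ on $\overline{S}\cap\{a\leq 1\}$. Since only a crude bound is needed, no computer algebra is required here. It follows that $j(a,b)\leq -256a^2+Ka^3<0$ whenever $0<a<256/K$.

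Finally I will handle the points of $\overline{S}$ with $a=0$. These force $b=0$, so the only such point is the origin. Taking $U=\{a<256/K\}$, intersected with a small disc if a bounded set is preferred, gives $Z_j\cap\overline{S}\cap U=\{(0,0)\}$, as claimed.

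The only step requiring care is the reduction $b\leq a$ on $\overline{S}$ near the origin, which is what lets every higher-order term be absorbed into $O(a^3)$. Because the upper boundary is the parabola $b=a^2/4$, this holds with room to spare, so I do not expect a genuine obstacle. The degenerate, square-shaped tangent cone would make a branch-by-branch (Puiseux) analysis awkward, and the sign estimate is chosen precisely to sidestep that.
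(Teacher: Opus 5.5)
Your proof is correct and is essentially the paper's argument: both isolate the quadratic part $-64(2a+3b)^2$, bound it below in absolute value on the nonnegative quadrant, and absorb every higher-degree monomial into a cubic remainder controlled by the sum of the absolute values of the coefficients. The only difference is cosmetic. The paper works on all of $\mathbb{R}^2_{\geq 0}$ with the norm $\sqrt{a^2+b^2}$, whereas you use $b\le a^2/4\le a$ on $\overline{S}$ to reduce everything to powers of $a$, which also gives the sign $j<0$. Strictly, you should take $U=\{a<\min(1,256/K)\}$ so that the bound $a^{p+q}\le a^3$ applies, but this is automatic since $K>256$.
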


\begin{proof}  
We will show that there exists a neighborhood $U$ of $(0,0)$ such that $j$ has no zeros with both coordinates non-negative, other than $(0,0)$. In other words, the only point of $Z_j\cap\R^2_{\geq 0}\cap U$ is $(0,0)$, where $\R^2_{\geq 0}$ consists of the points $(a,b)$ with $a,b\geq 0$. This implies the claim in the statement because of the obvious inclusion $\overline{S}\subset\R^2_{\geq 0}$.

In this proof we use the following notation. Given non-negative integers $\alpha,\beta$, let $c_{\alpha,\beta}$ denote the coefficient of $a^\alpha b^\beta$ in the polynomial $j$. Let $j_2=\sum_{\alpha + \beta=2} c_{\alpha,\beta} a^\alpha b^\beta$ be the degree $2$ part of $j$. Further, let $M= \sum_{\alpha+\beta>2} |c_{\alpha,\beta}|$.

The behavior of $j$ near $(0,0)$ is roughly controlled by its lowest degree term $j_2$, which equals
$$
j_2(a,b)=-256 a^{2}-768 a b-576 b^{2}=-64(2a+3b)^2.
$$
This polynomial vanishes along the graph of $2a+3b=0$, which only intersects $\overline{S}$ at $(0,0)$. Moreover, we have that
\begin{equation}\label{EQ:bound_on_j2}
|j_2(a,b)|>64(2a+2b)^2=256(a+b)^2>256(a^2+b^2)\quad\text{for all }(a,b)\in\R_{\geq 0}^2.
\end{equation}
Now observe that $|j|=|-j_2 -j +j_2| \geq |j_2| - |j-j_2|$ by the triangle inequality, so we need to estimate $|j-j_2|$. We first claim that $|j(a,b)-j_2(a,b)| \leq M\sqrt{a^2+b^2}^3$ for all $(a,b)$ with $\sqrt{a^2 +b^2}\leq 1$. Indeed, we have:
\begin{align*}
|j(a,b)-j_2(a,b)| &\leq \sum_{\alpha+\beta > 2} \left|c_{\alpha+\beta} a^\alpha b^\beta\right|\\
&\leq \sum_{\alpha+\beta>2} \left|c_{\alpha,\beta}\right| \sqrt{a^2+b^2}^\alpha \sqrt{a^2+b^2}^\beta\\ &=  \sum_{\alpha+\beta>2} |c_{\alpha,\beta}| \sqrt{a^2+b^2}^{\alpha+\beta}\\
&\leq \sum_{\alpha+\beta>2} |c_{\alpha,\beta}| \sqrt{a^2+b^2}^3\\ &=  M \sqrt{a^2+b^2}^3.
\end{align*}  
Here, the first line follows from the triangle inequality, the second follows because both $a,b\leq \sqrt{a^2+b^2}$, and the fourth follows from the fact that $\alpha+\beta \geq 3$ and $\sqrt{a^2+b^2}\leq 1$.

Now, take any positive number $r$ with $r < \frac{256}{M}$ and let $U$ be the open ball around $(0,0)$ with radius $r$. Note that $M>256$, so $r<1$ and thus $U$ is contained in the unit disk $\sqrt{a^2 +b^2}\leq 1$. In particular $|j-j_2|\leq M\sqrt{a^2+b^2}^3$ on $U$. Using that $r < \frac{256}{M}$, we conclude that
\begin{equation}\label{EQ:bound_on_j_minus_j2}
j(a,b)-j_2(a,b)< 256(a^2+b^2 ),\quad\text{for all }(a,b)\in U\setminus \{(0,0)\}.
\end{equation}
Now we use \eqref{EQ:bound_on_j2} and \eqref{EQ:bound_on_j_minus_j2} in the following chain of inequalities on $\R_{\geq 0}^2\cap (U\setminus\{(0,0)\})$:
\begin{align*} 
|j(a,b)| &\geq |j_2(a,b)| - |j(a,b)-j_2(a,b)|\\
&> 256(a^2+b^2) - 256(a^2+b^2)\\
&= 0,
\end{align*} 
so $j(a,b)\neq 0$ on $\R_{\geq 0}^2\cap (U\setminus\{(0,0)\})$ as claimed.
\end{proof}

We note that $\overline{S}\subseteq \mathbb{R}^2$ is an  embedded smooth manifold with boundary except at the points $(0,0), \left(\frac{3}{4},0\right)$, and $\left(1,\frac{1}{4}\right)$. While $\left(\frac{3}{4},0\right),\left(1,\frac{1}{4}\right)\notin Z_j$, we do know that $(0,0)\in Z_j$. Fortunately, from Lemma~\ref{LEM:isolated} we know that $Z_j\cap\overline{S}$ does not intersect a neighborhood of $(0,0)$. This implies that any point in $Z_j\cap\overline{S}$, apart from $(0,0)$, occurs at smooth (possibly boundary) points of $\overline{S}$. Hence, the implicit function theorem implies that, apart from $(0,0)$, the set $Z_j\cap\overline{S}$ is a union of smoothly embedded manifolds (possibly with boundary) of $\overline S$. As such, $Z_j\cap\overline{S}$ is a disjoint union of the point $(0,0)$, of $0$-manifolds (i.e. isolated points) and smoothly embedded $1$-manifolds. Let us discuss them.

As for the isolated points in $Z_j\cap\overline{S}$ other than $(0,0)$, Lemma~\ref{LEM:onesingular} implies that any such point would lie on the boundary $\partial S$ (and would correspond to a tangential intersections of $\overline S$ and $Z_j$). As for the embedded $1$-manifolds of $Z_j$, they must be either smooth circles or intervals, possibly including one or both endpoints. Since $S$ is compact and zero sets are closed, the intersection of $Z_j$ with $S$ must be compact, hence we can only get smooth circles or closed intervals. Moreover, again by Lemma~\ref{LEM:onesingular}, in the case of embedded intervals the endpoints must lie in the boundary of $\overline S$. The next proposition indicates that there are actually no circle components.

\begin{lemma}\label{LEM:nocircles} 
The set $Z_j\cap \overline{S}$ contains no embedded closed curves.
\end{lemma}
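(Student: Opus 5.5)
We follow the same strategy used in the proof of Lemma~\ref{LEM:x3_x4_continuous_on_T-A} to rule out circles in the zero set of $\Gamma$. Suppose for a contradiction that $Z_j\cap\overline{S}$ contains an embedded closed curve $\gamma$. By Lemma~\ref{LEM:isolated}, $\gamma$ avoids a neighborhood of $(0,0)$. By Lemma~\ref{LEM:onesingular}, $\gamma$ consists of non-singular points of $Z_j$, so it is a smooth compact $1$-manifold. Since $\overline{S}$ is compact and contained in $[0,1]\times\left[0,\tfrac14\right]$ by Remark~\ref{rem:nbh_of_S}, the function $b$ restricted to $\gamma$ attains a maximum and a minimum. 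These are two distinct points of $\gamma$, because $\gamma$ is a circle and not a point. At each of them the tangent line to $Z_j$ is horizontal, so $j_a=0$ there, while $j_b\neq 0$ since the point is non-singular. Hence $\gamma$ contains at least two distinct common zeros of $j$ and $j_a$ lying in $\overline{S}\setminus\{(0,0)\}$.

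To get the contradiction, we use the fact already established in the proof of Lemma~\ref{LEM:onesingular}. There, Sturm's Theorem applied to $\operatorname{res}_a(j,j_a)$ on $\left(-\tfrac1{100},\tfrac14\right]$ shows that its only root in that interval is $b=0$. So any common zero of $j$ and $j_a$ in $\overline{S}$ has $b_0=0$, and therefore lies on the bottom edge $\{b=0,\ 0\le a\le \tfrac34\}$ of $\overline S$.

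It remains to rule out such points other than $(0,0)$. We compute $j(a,0)=-432a^8+2520a^7-5676a^6+6368a^5-3920a^4+1408a^3-256a^2$. We then check, by Sturm's Theorem applied to $j(a,0)/a^2$ on $\left(0,\tfrac34\right]$ (or by a direct factorization), that it has no roots there. Thus the only point of $Z_j\cap\{b=0\}\cap\overline S$ is $(0,0)$, which is excluded from $\gamma$. Consequently $\gamma$ contains no common zero of $j$ and $j_a$, contradicting the existence of the two extremal points.

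The main obstacle is the root count of the resultant and of $j(a,0)$. Both are high-degree polynomials and must be handled with Maple's \verb+sturm+ and \verb+sturmseq+. If it turns out that $j(a,0)$ does have a root in $\left(0,\tfrac34\right]$, the fallback is to use the leftmost and rightmost points of $\gamma$ instead, where $j_b=0$. We would then apply the same resultant-plus-Sturm procedure to $\operatorname{res}_b(j,j_b)$ on $\left[0,1\right]$ to show that no such points exist in $\overline S\setminus\{(0,0)\}$.
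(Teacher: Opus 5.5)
There is a genuine gap in your third paragraph. The claim that
$j(a,0)/a^2=-432a^6+2520a^5-5676a^4+6368a^3-3920a^2+1408a-256$
has no roots on $\left(0,\tfrac34\right]$ is false. This polynomial equals $-256$ at $a=0$ and $\tfrac{1715}{256}>0$ at $a=\tfrac34$, so it vanishes somewhere in between. In fact Lemma~\ref{LEM:Zj-Intersect-Bound} shows that $Z_j$ meets the bottom edge of $\overline S$ in exactly one point $(a_1,0)$ with $a_1\in\left(0,\tfrac34\right)$.

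Hence your conclusion that $(0,0)$ is the only point of $Z_j\cap\{b=0\}\cap\overline S$ fails, and your argument no longer excludes the bottommost point of $\gamma$ being $(a_1,0)$. To exclude it you would need $j_a(a_1,0)\neq 0$, which you have not checked. The $j_b$ fallback is also not a proof. It is an unperformed computation, and nothing in the paper controls common zeros of $j$ and $j_b$ away from the $a$-axis.

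You never needed the bottom edge, and the fix uses only what your first two paragraphs already establish. Take just the topmost point $(a_0,b_0)$ of $\gamma$. It is a common zero of $j$ and $j_a$ in $\overline S$, so the resultant/Sturm fact gives $b_0=0$. But $b\ge 0$ on $\overline S$, so $\max_\gamma b=0$ forces $\gamma\subset\overline S\cap\{b=0\}$, which is a segment and cannot contain an embedded closed curve. Equivalently, any such curve has $b_0>0$, which is the contradiction. This is exactly the paper's proof. The minimum point and the analysis of $j(a,0)$ should simply be dropped.
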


\begin{proof}  
Suppose for a contradiction that such a closed curve $C$ exists.  By compactness of $C$, the map $C\rightarrow \mathbb{R}$ taking a point in $C\subseteq \overline{S}\subseteq \mathbb{R}^2$ to its $b$-coordinate achieves an absolute maximum at some $(a_0,b_0)\in C$ with $b_0 > 0$.  Being an absolute maximum, the tangent line to $C$ at $(a_0,b_0)$ must be horizontal, so the slope $-\frac{j_a}{j_b}$ vanishes, so that $j_a (a_0,b_0)= 0$.  But we showed in the proof of Lemma~\ref{LEM:onesingular} that any common solution $(a,b)$ of the system of equations $j = 0$ and $j_a = 0$ must have $b=0$.  This contradicts the fact that $b_0 > 0$.
\end{proof}

Because there are no circular components of $Z_j\cap \overline{S}$, we need only determine the isolated points of $Z_j\cap \overline{S}$ and the closed interval components. In Lemma~\ref{LEM:Zj-Intersect-Bound} we shall show that  $Z_j\cap \partial \overline{S}$ consists of precisely three points. In particular, together with Lemmas~\ref{LEM:onesingular} and \ref{LEM:isolated} it will follow that $Z_j\cap \overline{S}$ equals either these three points or the point $(0,0)$ together with one line segment joining the two other points. We will later see in the proof of Proposition~\ref{prop:3components-new-version} that, in fact, the second possibility holds.

\begin{lemma}\label{LEM:Zj-Intersect-Bound}
The set $Z_j\cap \partial \overline{S}$ consists of precisely three points: $(0,0)$, one of the form $(a,0)$ for some $a\in \left(0,\frac{3}{4}\right)$ and one of the form $\left(a,a-\frac{3}{4}\right)$ for some $a\in \left(\frac{3}{4},1\right]$. 
\end{lemma}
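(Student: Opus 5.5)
The boundary $\partial\overline{S}$ consists of three arcs: the parabola arc $b=\frac{a^2}{4}$ with $a\in[0,1]$, the segment $b=0$ with $a\in[0,\frac34]$, and the segment $b=a-\frac34$ with $a\in[\frac34,1]$. I would restrict $j$ to each arc, which gives a single-variable polynomial in $a$ on each one, and count its roots on the relevant interval with Sturm's Theorem~\ref{T:Sturm_root}, applied to the square-free part as in Remark~\ref{REM:Sturm}. The point $(0,0)$ lies in $Z_j$ because $j$ has no terms of total degree below $2$, so it only has to be accounted for in the counts.

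\emph{Bottom segment.} Here $j(a,0)=-432a^8+2520a^7-5676a^6+6368a^5-3920a^4+1408a^3-256a^2$. I would factor out $a^2$ to remove the known root at the origin and obtain a sextic $q(a)$. Sturm's Theorem on $(0,\frac34]$ should give exactly one root of $q$. To see that this root is not at the endpoint, I would check that $q(\frac34)\neq 0$. It then lies in $(0,\frac34)$, which produces the point $(a,0)$ of the statement.

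\emph{Slanted segment.} Substituting $b=a-\frac34$ gives a polynomial $p(a)$ with rational coefficients; I would clear denominators and apply Sturm on the half-open interval $(\frac34,1]$. The expected count is one root. I would also check that $p(\frac34)=j(\frac34,0)\neq0$, which is consistent with the root on the bottom segment lying strictly inside $(0,\frac34)$. This produces the point $(a,a-\frac34)$ with $a\in(\frac34,1]$.

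\emph{Parabola arc.} Substituting $b=\frac{a^2}{4}$ gives a polynomial $j(a,\frac{a^2}{4})$, divisible by some power $a^k$. After dividing out $a^k$, I would show by Sturm's Theorem that the quotient has no roots in $(0,1]$. I might also verify directly that it is nonzero at $a=0$ and at $a=1$, the latter since $(1,\frac14)$ is a corner of $S$. Together with the fact that the arc meets the origin, this shows the arc contributes only $(0,0)$. A quicker consistency check near the origin comes from Lemma~\ref{LEM:isolated}: the only zero of $j$ in $\overline{S}$ near $(0,0)$ is the origin itself.

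\emph{Main obstacle.} The difficulty is purely computational. On the slanted segment and on the parabola the restricted polynomials have high degree and large rational coefficients, so the Sturm sequences have to be computed exactly, for instance with Maple as in Section~\ref{SEC:resultant_Sturm}. Care is also needed at the endpoints of the half-open intervals so that corner points are neither double counted nor missed. If direct Sturm counting on the parabola turns out to be unwieldy, I would instead bound the lowest-order terms against the higher-order ones, as in Lemma~\ref{LEM:isolated}, near $a=0$, and use Sturm only on a subinterval $[\epsilon,1]$.
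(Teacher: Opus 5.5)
Your proposal is correct and follows essentially the same route as the paper. Both restrict $j$ to the three boundary arcs of $\overline{S}$, apply Sturm's Theorem on $(0,1]$, $(0,\frac34]$ and $(\frac34,1]$ respectively, and use $j(\frac34,0)\neq 0$ to place the bottom root in the open interval $(0,\frac34)$. Your extra steps, such as dividing out powers of $a$ and checking the corner points, are harmless but not needed, because the half-open intervals already exclude the origin and the corner $(\frac34,0)$ from the relevant counts.
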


\begin{proof} 
The set $\overline{S}$ contains three boundary arcs with equations  $b=\frac{a^2}{4}$ with $a\in [0,1]$, $b=0$ with $a\in \left[0,\frac{3}{4}\right]$, and $b=a-\frac{3}{4}$ with $a\in\left[\frac{3}{4},1\right]$.

To begin with, if we substitute $b= \frac{a^2}{4}$ into $j = 0$, Sturm's Theorem~\ref{T:Sturm_root} (see also Remark~\ref{REM:Sturm}) shows that the resulting function of $a$ has no zeros on $(0,1]$.

If we substitute $b= 0$ into $j$, Sturm's Theorem shows the resulting function of $a$ has precisely one zero on $\left(0,\frac{3}{4}\right]$  Moreover, this zero must lie in the interval $\left(0,\frac{3}{4}\right)$ because, as it is easy to verify, the point $\left(\frac{3}{4},0\right)$ does not belong to $Z_j$.

Finally, if we substitute $b=a-\frac{3}{4}$ into $j$, Sturm's Theorem shows the resulting function of $a$ has precisely one zero on $\left(\frac{3}{4},1\right]$.
\end{proof}

We are finally ready to prove Proposition \ref{prop:3components-new-version}.

\begin{proof}[Proof of Proposition \ref{prop:3components-new-version}]
Consider the points $\left(\frac{19}{25}, \frac{1}{100}\right)$ and $\left(\frac{9}{10}, \frac{3}{20}\right)$.  These points lie on the curve $b = a-\frac{3}{4}$ which lies in the boundary of $S$.  If we substitute these points into $j$, we find that the outputs are respectively $\frac{9274567512848}{3814697265625} > 0$ and $-\frac{1820807829}{125000000}  <0$.  It follows that they lie in different components of $\overline{S}\setminus Z_j$. By continuity, it follows that $j$ changes sign on $S$. Thus, there are at least two components of $\overline{S}\setminus Z_j$. We complete the proof by arguing that there are only two components.

We showed above (see Lemma~\ref{LEM:Zj-Intersect-Bound} and the preceding discussion) that $Z_j\cap\overline{S}$ equals either three isolated points lying in $\partial S$ or the $(0,0)$ together with one line segment joining the two other points. In the first possibility it is clear that $\overline S\setminus Z_j$ is connected, which is a contradiction with the fact that $\overline S\setminus Z_j$ has at least two components. Thus, $\overline S\setminus Z_j$ equals $(0,0)$ together with one line segment joining the two other points.  Now note that removing removing $(0,0)$ does not affect the number of components, so $\overline S\setminus Z_j$ has the same number of components as $\overline S$ with the interval with endpoints in $\partial S$ removed. It is easy to see that the interval divides $\overline S$ in two components, see for example \cite[Theorem~11.7]{Ne61}. It follows that $\overline S\setminus Z_j$ has two components, and the same holds for $S\setminus Z_j$.
\end{proof}

\section{Proof of Theorem~\ref{THM:homogeneous_spaces}}\label{sec:thmcd}

In this section we prove Theorem~\ref{THM:homogeneous_spaces}. We will use the following result.

\begin{lemma}\label{LEM:RF_coverings}
Suppose $\pi:(M_1,g_1)\rightarrow (M_2,g_2)$ is a Riemannian covering map of Riemannian manifolds $(M_i,g_i)$, $i\in \{1,2\}$.  Let $L_i\in (0,\infty]$ denote the maximal time of existence of the Ricci flow of the metric $g_i$.  Then $L_1 = L_2$.
\end{lemma}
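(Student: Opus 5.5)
The plan is to compare the two flows directly: pull back the flow downstairs and use uniqueness of Ricci flow on compact manifolds. In the setting of Theorem~\ref{THM:homogeneous_spaces} all manifolds are compact, or at least complete with bounded curvature, and homogeneous. So the Ricci flow with given initial data is unique (Hamilton for compact, Chen--Zhu for complete bounded curvature), and $L_i$ denotes the supremum of times on which a smooth solution exists. Throughout, $\pi$ is a local isometry, and the Ricci tensor is a local invariant that commutes with pullback by local diffeomorphisms: $\Ric(\pi^*h)=\pi^*\Ric(h)$.

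First step: show $L_1\geq L_2$. Let $g_2(\ell)$, $\ell\in[0,L_2)$, be the Ricci flow of $g_2$ and set $\tilde g(\ell):=\pi^*g_2(\ell)$. Since $\pi$ is a local diffeomorphism, we get
\[
\frac{\partial}{\partial\ell}\tilde g=\pi^*\Bigl(-2\Ric(g_2(\ell))\Bigr)=-2\Ric(\tilde g(\ell)),\qquad \tilde g(0)=\pi^*g_2=g_1.
\]
Thus $\tilde g$ is a Ricci flow of $g_1$ on $[0,L_2)$. It is complete with bounded curvature on compact time intervals, since the curvature bounds are those of $g_2(\ell)$ on $M_2$. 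By uniqueness it coincides with $g_1(\ell)$, and so $L_1\geq L_2$.

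Second step: show $L_2\geq L_1$. We descend the flow of $g_1$. Deck transformations $\gamma$ of $\pi$ are isometries of $g_1$, so $\gamma^*g_1(\ell)$ is also a Ricci flow starting at $g_1$. By uniqueness, $\gamma^*g_1(\ell)=g_1(\ell)$ for all $\ell<L_1$. If the covering is not normal, we replace this by a local argument. Since $\pi$ is a local isometry, $g_1(\ell)$ and the pullback of any local solution agree by uniqueness. Alternatively, we pass to the universal cover $\widetilde M$ of both $M_1$ and $M_2$, where the covering is normal, and apply the invariance there, comparing both $L_1$ and $L_2$ with $L_{\widetilde M}$. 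With invariance under the deck group in hand, $g_1(\ell)$ descends to a smooth family $h(\ell)$ on $M_2$ with $\pi^*h(\ell)=g_1(\ell)$. The Ricci flow equation holds for $h$ because it holds locally, where $\pi$ is an isometry. Hence $h$ is a Ricci flow of $g_2$ on $[0,L_1)$, giving $L_2\geq L_1$.

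The main obstacle is the uniqueness step on the possibly noncompact cover, where one needs a bounded-curvature uniqueness theorem. In our homogeneous situation this can be bypassed. The flow stays homogeneous and reduces to the ODE \eqref{Ric_ODE} on $\Ad(H)$-invariant inner products. The ODE is literally the same for $M_1$ and $M_2$, because the Ricci tensor at the origin depends only on the Lie algebra data $\lieG=\lieH\oplus\lieP$, which is shared by the cover and the quotient. ODE uniqueness then gives $L_1=L_2$ immediately. I would present this ODE argument as the primary proof when $G$ acts transitively on both spaces, and keep the pullback argument for the general statement.
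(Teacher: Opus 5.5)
Your proposal is correct and follows the paper's proof. Pulling back $g_2(\ell)$ by $\pi$ gives $L_1\ge L_2$, and descending the deck-invariant flow $g_1(\ell)$ to $M_2$ gives $L_2\ge L_1$, with uniqueness of the Ricci flow used in each direction; you are more careful than the paper in justifying deck-invariance via uniqueness and handling non-normal covers through the universal cover, but drop the ``local argument'' alternative, since Ricci flow has no local uniqueness.
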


\begin{proof}
Let $g_i(\ell)$ denote the Ricci flow of $g_i$.  Since $g_1= \pi^\ast g_2$ is invariant under the action of the  deck transformations group of $\pi$, so is $g_1(\ell)$ for all $\ell\in[0, L_1)$. Therefore, $g_1(\ell)$ descends to a curve of metrics $\overline{g}_1(\ell)$ on $M_2$ with $\overline{g}_1(0) = g_2$.  Since $\pi$ is a local isometry, $\overline{g}_1(\ell) $ satisfies the  Ricci flow equation on $M_2$ and by uniqueness of Ricci flow, we conclude that $\overline{g}_1(\ell) = g_2(\ell)$ for all $\ell\in [0,L_1)$.  In particular, this implies $L_1\leq L_2$.

Conversely, the metric $\pi^\ast g_2(\ell)$ satisfies the Ricci flow equation on $M_1$ with $\pi^\ast g_2(0) = g_1$, so $\pi^\ast g_2(\ell) = g_1(\ell)$ by uniqueness of the Ricci flow.  It now follows that $L_2\leq L_1$. 
\end{proof}

\begin{proof}[Proof of Theorem~\ref{THM:homogeneous_spaces}]  

First of all, in the simply connected case the proof has been mostly given in the introduction. Again, from the classification of positively curved homogeneous spaces, collected and completed in \cite{WZ18}, we know that such a space $M$ is diffeomorphic to a compact rank one symmetric space, a Berger space $B^7,B^{13}$, a Wallach space $W^6,W^{12},W^{24}$, or an Aloff-Wallach space $W_{p,q}^7$. Now, if $M$ has a homogeneous $\sec >0$ metric which evolves to planes with non-positive curvature, Theorem~\ref{THM:main} implies that $M$ cannot be a compact rank one symmetric space, and it cannot be $B^7$ either, since $B^7$ admits a unique homogeneous metric up to scaling and  this metric is Einstein \cite[p. 83]{Z-Survey}. Thus $M$ must be diffeomorphic to one of $W^6$, $W^{12}$, $W^{24}$, $W_{p,q}^7$ or $B^{13}$. Conversely, if $M$ is diffeomorphic to one of $W^6$, $W^{12}$, $W^{24}$, $W_{p,q}^7$ or $B^{13}$, it follows from \cite[Theorem~2]{CW15} and \cite[Theorems~A and B, and Subsection~3.5]{GZ25} that $M$ has a homogeneous $\sec >0$ metric which evolves to planes with non-positive curvature. 
   
Now let $M$ be a compact connected manifold admitting a homogeneous metric $g$ with $\sec>0$ which evolves to metrics with non-positively curved planes under the Ricci flow. In fact, the following proof works in the more general case where $g$ is locally homogeneous. Let $(\tilde M, \tilde g)$ be the universal cover of $(M, g)$. Then  $(\tilde M, \tilde g)$ is a homogeneous space \cite[Main~Theorem, p. 692]{S60}). Let $\tilde g(t)$ and $g(t)$ be the Ricci flows of $\tilde g$ and $g$, respectively. Lemma~\ref{LEM:RF_coverings} implies that $\tilde g(t)$ and $g(t)$ have the same maximal forward existence time. Consequently, the homogeneous metric $\tilde g$ on $\tilde M$ evolves to a metric with non-positively curved planes since the universal covering maps are local isometries. It follows from the simply connected case settled above that $\tilde M$ must be diffeomorphic to one of the spaces listed in the statement.

Now, we prove the converse. If $M$ is simply connected, the result has been proven above. Therefore, we assume that $M$ is a non-simply connected homogeneous manifold whose universal cover is one of $W^6$, $W^{12}$, $W^{24}$, $W_{p,q}^7$ or $B^{13}$. For the proof we rely on the classification of positively curved homogeneous spaces in \cite{WZ18}, and for simplicity we do not repeat this citation throughout the proof. In particular, $B^{13}$ does not admit a positively curved finite quotient, so it suffices to consider spaces $M$ whose universal cover $\tilde M$ is one of $W^6$, $W^{12}$, $W^{24}$ or $W_{p,q}^7$. We will show that the universal cover has a homogeneous metric $g_1$ of $\sec >0$ which, on the one hand evolves to metrics with non-positively curved planes, and on the other hand descends to a homogeneous metric on the quotient $M$ in the sense that $(\tilde M,g_1)\to (M,g_2)$ is a Riemannian covering. By the properties of coverings and of the Ricci flow it then follows that $g_2$ is a $\sec >0$ metric which evolves to metrics with non-positively curved planes, as desired. We treat each case separately. 

Suppose that $\tilde M$ is diffeomorphic to an Aloff--Wallach space $W^7_{p, q}$. First note that from \cite[Proposition 9, pg. 94]{On1},  if $M$ is diffeomorphic to the homogeneous presentation  $(\u{3}/\mathsf{T}^2_{p,q})/\Gamma$, for some finite subgroup $\Gamma\subseteq  \frac{{N}_{\u{3}}(\mathsf{T}^2_{p, q})}{\mathsf{T}^2_{p, q}}$, we can find a finite subgroup $\tilde \Gamma\subseteq \frac{N_{\su{3}}(\mathsf{S}^1_{p, q})}{\mathsf{S}^1_{p, q}}$ such that $M$ is diffeomorphic to $(\su{3}/\mathsf{S}^1_{p,q})/\tilde\Gamma$. Hence without loss of generality, we can assume that $M$ is given as a finite quotient of $\su{3}/\mathsf{S}^1_{p,q}$. 
 
 Consider the case $(p, q)=(1, 1)$. Let $g_1$ be a metric on $W^7_{1, 1}$ which is $\u{2}$-invariant and evolves under the Ricci flow to a metric with non-positively curved planes. The existence of such metrics is guaranteed by \cite[Theorem 2.8]{GZ25}. Since $M$ is diffeomorphic to $W^7_{1, 1}/\Gamma$, where $\Gamma$ is a finite subgroup of $\u{2}/\mathsf{S}^1_{1,1}$, the metric $g_1$ descends to a metric $g_2$ on $M$ with the desired properties. 

 Now, let $p\neq q$. Then there exists a $\mathsf{T}^2$-invariant metric $g_1$ on $W^7_{p, q}$ which evolves under the Ricci flow to a metric with non-positively curved planes \cite[Subsection~3.5]{GZ25}. Similarly, since $M$ is diffeomorphic to $W^7_{p,q}/\Gamma$, where $\Gamma$ is a finite subgroup of $\mathsf{T}^{2}/\mathsf{S}^1_{p,q}$, the metric $g_1$ descends to a metric $g_2$ on $M$ as in the statement of the theorem. 

Suppose now that $\tilde M$ is diffeomorphic to a Wallach space $W:=W^n$, for $n=6, 12, 24$. Since the arguments in all cases are analogous, we treat them simultaneously. Recall that, by Synge's Theorem, any non-trivial isometric quotient of an even-dimensional $\sec>0$ manifold must have fundamental group isomorphic to $\mathbb Z_2$. In the case of $W$, there are three $\mathbb{Z}_2$-quotients  which give rise to non-simply connected homogeneous spaces  with positive sectional curvature. These three $\mathbb{Z}_2$ subgroups are conjugate in $S_3=N_{G}(H)/H$, where $G/H$ denotes the corresponding homogeneous presentation of $W$. Hence the quotients of $W$ by these groups are isometric \cite[Lemma~2.5.6]{Wolf}. Therefore, we only need to deal with one of the quotients.

Let $g_1$ be a $G$-invariant right $\mathbb{Z}_2$-invariant  metric with $\sec>0$ which evolves under the Ricci flow to a metric with non-positively curved planes \cite[Theorem 2]{CW15}. Since $\mathbb{Z}_2\subseteq N_{G}(H)/H$, it follows that $g_1$ descends to a metric $g_2$ on $M$ as desired. 
\end{proof}

 We conclude this section with some remarks.
\begin{remark}
As explained in the proof of Theorem~\ref{THM:homogeneous_spaces}, the ``only if'' part works more generally for locally homogeneous spaces. More precisely, the proof shows that if $M$ admits a locally homogeneous metric $g$ with $\sec > 0$ whose Ricci flow evolves to metrics with non-positively curved planes, then the universal cover of $M$ is diffeomorphic to one of $W^6$, $W^{12}$, $W^{24}$, $W^7_{p,q}$ or $B^{13}$.    
\end{remark}

\begin{remark}
 B\"ohm and Wilking found homogeneous metrics on $W^{12}$ with $\sec>0$ which lose this property under the flow \cite{BW07}. Their strategy of proof differs from the one in \cite{CW15,GZ25}, and they in fact  show the stronger statement that the flow leaves the set of metrics of positive Ricci curvature. Since the metrics of B\"ohm and Wilking do not descend to the isometric $\mathbb Z_2$-quotients of $W^{12}$, it remains open whether the same  behavior occurs for these quotients.
\end{remark}

\end{document}